\theoremstyle{plain}
\newtheorem{theorem}{Theorem}[section]
\newtheorem{lem}[theorem]{Lemma}
\newtheorem{prop}[theorem]{Proposition}
\newtheorem{cor}[theorem]{Corollary}
\newtheorem{defi}[theorem]{Definition}
\theoremstyle{definition}
\theoremstyle{remark}
\newtheorem*{rem} {Remark}
\newcommand{\ktquot}[2]{
{#1}_{{\mbox{\hspace{-3pt}\it
      /\hspace{-3pt}/}}_{\mbox{\hspace{-3pt}$#2$}}}
}
\newcommand{\modulo}[2]{
{#1}_{{\mbox{\hspace{-3pt}\it
      /\hspace{-3pt} }}_{\mbox{\hspace{-3pt}$#2$}}}
}
\newcommand{\momentMH}{\mathcal{M}(M_H)}
\newcommand{\momentMKH}{\mathcal{M}(M_{(H)})}
\newcommand{\momentMcH}{\mathcal{M}(M^c_{H})}
\newcommand{\momentMcKH}{\mathcal{M}(M^c_{(H)})}
\newcommand{\orbit}[2]{
{#1}{{\mbox{\hspace{-3pt} $\cdot$ \hspace{-3pt}}}{#2}} }
\newcommand{\longhookuparrow}{^{\cup}\hskip-5.15pt{ }^{{}^{\textstyle\Big\uparrow}}}
\newcommand{\longhookrightarrow}{\ensuremath{\lhook\joinrel\relbar\joinrel\rightarrow}}
\title{Equivariant K\"ahlerian extensions of contact manifolds}
\author{Ay\c{s}e Kurtdere}
\date{}
\begin{document}
\parindent0cm

\maketitle

\begin{abstract}
\noindent
For contact manifolds $(M, \eta)$ a complexification $M^c$ is constructed
to which the contact form $\eta$ extends such that the exterior derivative
of the extended form is K\"ahlerian. In the case of a proper action of an 
extendable Lie group this construction is realized in an equivariant way.
In a simultaneous stratification of $M$ and $M^c$ according to the 
istropy type, it is shown that the K\"ahlerian reduction of the complexification 
can be seen as the complexification of the contact reduction.
\end{abstract}

%\thispagestyle{empty}
%{\phantom{Dieser Text erscheint nicht.}}
%\vspace{15cm}

%\newpage 
%\thispagestyle{empty}
%\tableofcontents

%\newpage

%\thispagestyle{empty}
%{\phantom{Dieser Text erscheint nicht.}}
%\vspace{15cm}
%\newpage

%\setcounter{page}{0}
%\pagenumbering{Roman}

%\addcontentsline{toc}{section}{\numberline{}Einleitung}

\setcounter{page}{1}

%-----------------------------------------------------------

\bigskip

\section{Introduction}

\bigskip

Manifolds with additional structure can sometimes be understood better 
if the structure extends to a complexification of the manifold.
By a result of Whitney (\cite{Whitney-analytic-extensions}) 
and Shutrick (\cite{Shutrick})
a differentiable manifold $M$ can be embedded as 
a closed, real analytic and totally real submanifold of a complex manifold $M^c$ 
with the dimension $\dim_{\mathbb C} M^c = \dim_{\mathbb R} M$.
Using his solution of the Levi problem 
Grauert (\cite{Grauert}) proved that 
the complexification $M^c$ can be realized as a Stein manifold,
in particular,
it can be holomorphically and properly embedded in some $\mathbb C^N$.
During the last two decades, 
complexifications of real manifolds 
with additional structure
achieved some attention.
%A real analytic manifold with a proper and real analytic $G$-action 
%can be complexified according to Heinzner 
%(\cite{Heinzner-equivariant-extensions}).
%
%
An equivariant version for proper actions has been shown in 
\cite{Heinzner-equivariant-extensions}.
Stratmann (\cite{Stratmann}) considers proper actions of Lie groups $G$ 
on symplectic manifolds $(M, \omega)$
and shows that there is a Stein complexification $M^c$ of $M$ 
with a $G$-invariant 
K\"ahler form $\tau$ such that 
$\omega = \iota_M^{\ast} (\tau)$
where $\iota_M : M \hookrightarrow M^c$
is the embedding of $M$ in $M^c$.

%\bigskip

In this paper a similar extension result is shown for $1$-forms.
%
%
%The motivation of this project 
%was to extend contact structures on a manifold $M$ 
%to a suitable complexification $M^c$ under the presence of symmetries.
%More precisely, 
%the action of a Lie group $G$ is assumed to be proper 
%and to leave an arbitrary smooth $1$-form $\eta$ invariant.
%A particular case is a contact manifold $(M, \eta)$
%on which a Lie group $G$ acts properly by contact transformations.
%
Contact manifolds $(M, \eta)$
on which a Lie group $G$ acts properly by contact transformations 
are of particular interest.
As a general assumption in this work
the Lie group $G$ has finitely many connected components, 
is extendable, i.e., 
the natural homomorphism $G \rightarrow G^{\mathbb C}$ is injective, 
and is acting properly on $M$ as a group of diffeomorphisms.
%
%
%\bigskip
%
%
In Section \ref{Extension of forms} 
%and 
%\ref{Equivariant extensions in the case of proper group actions},
an equivariant complexification $\iota_M : M \hookrightarrow M^c$
such that $G$ acts on $M^c$ properly by holomorphic transformations
and a strictly plurisubharmonic, $G$-invariant function 
$\varrho : M^c \rightarrow \mathbb R$ 
are constructed with the property that
%\begin{equation}\label{iota-dc-varrho} 
%  \eta = \iota_M^{\ast} (d^c \varrho).
%\end{equation}
$  \eta = \iota_M^{\ast} (d^c \varrho).$
For this, a slice construction for $M = G \times^K S$,
$K$ maximal compact in $G$, is used to construct a complexification $M^c$
of $M$ by a complexification $G^{\mathbb C} \times S^{\mathbb C}$ of 
$G \times S$:
\[ \begin{array}{rcl} 
     G \times S & \longhookrightarrow & 
     \Omega \subset G^{\mathbb C} \times S^{\mathbb C} \\
     & & \\
     \Big \downarrow \mbox{ }^{ \pi_K } & & 
      \Big \downarrow \mbox{ }^{\pi_{K^{\mathbb C}} } \\
     & & \\
     M = G \times^K S & \longhookrightarrow & 
     M^{\mathbb C} = G^{\mathbb C} \times^{K^{\mathbb C}} S^{\mathbb C}.
   \end{array} \]
The interplay of complexifications and contact reductions 
%(\cite{Loose-contact-reduction}, \cite{Willett})
and K\"ahlerian reductions 
%(\cite{Heinzner-geometric-invariant-stein}, \cite{Sjamaar})
is discussed in the remaining section.
Roughly speaking, the complexifications of contact reductions 
of a $G$-contact manifold $(M, \eta)$ 
can be seen as the K\"ahlerian reduction of the complexification.
This is shown along a simultaneous stratification of both 
the contact manifold and its complexification.

%\bigskip
%{\phantom{Dieser Text erscheint nicht.}}
%\vspace{1cm}

\bigskip

\section{Extension of forms}\label{Extension of forms}

\bigskip

%Let $K$ be a compact Lie group 
%and let $M$ be a $K$-manifold with an invariant $1$-form $\eta$.
%The aim of the first section is 
%to construct a smooth, $K$-equivariant Stein complexification $M^c$ of $M$ 
%to which $\eta$ is extended such that
%$ \iota_M^{\ast}(d^c \varrho) = \eta,$
%where $\varrho : M^c \rightarrow \mathbb R$
%is a smooth, $K$-invariant and strictly plurisubharmonic function on $M$
%and $\iota_M : M \hookrightarrow M^c$.
%
%
%This is motivated by the question as to whether 
%the contact
%%\footnote{A contact form $\eta$ on a manifold $M$ is a $1$-form such that $ \eta \wedge (d \eta)^n $ is a (nowhere vanishing) volume form. Alternative definitions are discussed, for instance, by Blair (\cite{Blair}, Chapter 1).} 
%form $\eta$ of a contact manifold $(M, \eta)$
%can be extended to a complexification $M^c$ of $M$.
%A contact form $\eta$ on a manifold $M$ is a 
%$1$-form such that $ \eta \wedge (d \eta)^n $
%is a (nowhere vanishing) volume form.
%In the following, arbitrary $1$-forms $\eta$
%are considered;
%the contact condition is not needed.

Let $G$ be an extendable Lie group.
Let $M$ be a real analytic $G$-manifold with a $1$-form $\eta$.
%There is a $G$-equivariant complexification $M^c$ of $M$
%with real analytic $G$-equivariant map
%$\iota_M : M \rightarrow M^c$
%(\cite{Heinzner-equivariant-extensions}, \cite{Kutzschebauch-eigentliche-Wirkungen}).
%This complexification is unique as a germ, i.e.
%two complexifications are biholomorphic after shrinking.
In this section the form is extended to a complexification 
$M^c$ of $M$.
This is done equivariantly for groups acting on $M$ and leaving $\eta$ invariant.

\bigskip

\subsection{Equivariant extensions in the case of compact groups}

\bigskip

Let $K$ be a compact transformation group 
and let $M$ be a $K$-manifold 
with a $K$-invariant $1$-form $\eta$.
In the following, an equivariant complexification $M^c$ of $M$
is constructed to which $\eta$ is extended equivariantly.

%\bigskip

\medskip

%\begin{theorem}\label{compact-extension}
\begin{prop}\label{compact-extension}
Let $K$ be a compact Lie group, $M$ a $K$-manifold
and $\eta$ be a $K$-invariant $1$-form.
Then there are a $K$-equivariant complexification $M^c$ of $M$
and a $K$-invariant strictly plurisubharmonic function
$\varrho : M^c \rightarrow \mathbb R$ such that
$$ \iota_M^{\ast}(d^c \varrho) = \eta.$$
%\end{theorem}
\end{prop}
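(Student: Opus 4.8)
The plan is to build $M^c$ by the standard equivariant real-analytic complexification and then produce $\varrho$ by averaging a local potential over the compact group $K$. First I would put a $K$-invariant real-analytic structure on $M$ (possible since $K$ is compact, by equivariant approximation), and invoke Whitney--Shutrick--Grauert together with the equivariant version in \cite{Heinzner-equivariant-extensions} to obtain a Stein $K$-complexification $\iota_M : M \hookrightarrow M^c$ on which $K$ acts by holomorphic transformations, with $M$ a totally real, maximal-dimensional, real-analytic submanifold. Since $\eta$ is real-analytic and $K$-invariant, it extends to a holomorphic $(1,0)$-form on a $K$-invariant neighborhood of $M$ in $M^c$; shrinking $M^c$ (a Stein neighborhood basis of $M$ exists and can be chosen $K$-invariant by averaging) I may assume this holomorphic extension $\tilde\eta$ is defined on all of $M^c$ and is $K$-invariant.

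The next step is to realize $\tilde\eta$, after possibly shrinking again, in the form $d^c$ of a function. On a totally real maximal submanifold the map $\eta \mapsto \iota_M^\ast(d^c\varrho)$ is essentially the obstruction to writing a $1$-form as $\mathrm{Im}(\partial\varphi)|_M$ for a complexified potential; concretely, if I choose local holomorphic coordinates adapted to $M$, a $1$-form $\eta = \sum a_j\, dx^j$ on $M$ is $\iota_M^\ast(d^c\varrho)$ exactly when $\varrho$ is a function whose complexified differential has the right imaginary part along $M$, and such a $\varrho$ can be constructed as a real-analytic function on $M^c$ whose $2$-jet transverse to $M$ is prescribed. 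I would construct such a $\varrho_0$ locally and patch with a $K$-invariant partition of unity, then replace it by its $K$-average $\varrho_1 := \int_K k^\ast\varrho_0\, dk$, which still satisfies $\iota_M^\ast(d^c\varrho_1) = \eta$ by $K$-invariance of $\eta$ and of $d^c$ under the holomorphic $K$-action.

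The remaining — and I expect principal — obstacle is strict plurisubharmonicity of $\varrho$ on all of $M^c$. The function $\varrho_1$ above controls only the first-order behaviour of $d^c\varrho$ along $M$ and says nothing about the complex Hessian. The standard fix is to add a large multiple of a $K$-invariant strictly plurisubharmonic exhaustion: since $M^c$ is Stein and $K$ is compact, there is a $K$-invariant strictly plurisubharmonic function $u$ with $\iota_M^\ast u \equiv 0$ and $\iota_M^\ast(du) \equiv 0$ (e.g. the squared distance to $M$ in a $K$-invariant embedding into $\mathbb{C}^N$, which vanishes to second order along $M$). Then $\iota_M^\ast(d^c(u)) = 0$, so $\varrho := \varrho_1 + u$ still pulls back correctly, and after shrinking $M^c$ to a relatively compact $K$-invariant neighborhood of $M$ the complex Hessian of $\varrho_1$ is bounded while that of $u$ is positive definite and bounded below, so a sufficiently small such neighborhood (or rescaling $u \mapsto Cu$) makes $\varrho$ strictly plurisubharmonic. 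I would need to check that all shrinkings can be done $K$-invariantly, which is automatic by averaging neighborhoods, and that $\varrho$ can be taken real-valued and smooth, which it is by construction.
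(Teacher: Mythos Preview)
Your plan is essentially the paper's: build a local potential, patch with a partition of unity, average over $K$, then correct for strict plurisubharmonicity by adding a $K$-invariant strictly plurisubharmonic function vanishing to second order along $M$. Two points deserve tightening. First, the holomorphic extension of $\eta$ to a $(1,0)$-form $\tilde\eta$ is a detour: it presumes $\eta$ is real-analytic (not assumed --- the paper treats smooth $\eta$), and you never actually use $\tilde\eta$; the paper simply writes $\eta = \sum_j f_j\,dx_j$ in adapted local coordinates and sets $\varrho_\alpha(x+iy) = \sum_j f_j(x)\,y_j$ directly. Second, and more substantively, the partition-of-unity patching needs the local potentials to vanish on $M$: from $d^c(\chi_\beta\varrho_\beta) = \chi_\beta\,d^c\varrho_\beta + \varrho_\beta\,d^c\chi_\beta$, the cross term pulls back to $(\varrho_\beta|_M)\cdot\iota_M^* d^c\chi_\beta$, which is zero only if $\varrho_\beta|_M \equiv 0$. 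The paper's explicit local formula guarantees this automatically; make sure your local $\varrho_0$ is built the same way.
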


%\medskip

\begin{proof}[{\bf Proof}]
First, the local sitiation without the presence of symmetries is considered.
By a theorem of Whitney (\cite{Whitney-diff-mfd}, Theorem 1) %(p. 654) 
$M$ can be given an atlas with a real analytic structure.
It can also be assumed that the action map $K \times M \rightarrow M$
is real analytic (\cite{Japaner}).
Let $X$ be a complexification of $M$ such that
$ \iota_M : M \hookrightarrow X $
is a real analytic, closed embedding 
(\cite{Whitney-Bruhat}).
It can be assumed that $X$ is a Stein manifold 
(\cite{Grauert}). 
Let $(U_{\alpha}, \varphi_{\alpha})_{\alpha \in I}$ 
be an atlas of real analytic charts
$\varphi_{\alpha}: U_{\alpha} \rightarrow 
                   \varphi_{\alpha}(U_{\alpha})
                             \subset \mathbb R^n$.
Every map
$\varphi_{\alpha}^{-1}: \varphi_{\alpha}(U_{\alpha}) \rightarrow U_{\alpha}$
extends biholomorphically to a map
$ (\varphi_{\alpha}^{-1})^{\mathbb C}: 
  (\varphi_{\alpha})^{\mathbb C}(W_{\alpha}) \rightarrow W_{\alpha}$,
where $W_{\alpha}$ is an open and connected neighbourhood of
$U_{\alpha}$ in $X$ and 
$(\varphi_{\alpha}^{-1})^{\mathbb C}(W_{\alpha})$
is open %and connected in
in $U_{\alpha} \times i \mathbb R^n \subset \mathbb C^n$.
Then $\bigcup_{\alpha} W_{\alpha} $
is an open submanifold of $X$ containing $M$.
After shrinking, this set can be chosen as 
a Stein neighbourhood $M^c$ of $M$ in $X$ (\cite{Grauert}). 
The biholomorphic maps $(\varphi_{\alpha}^{-1})^{\mathbb C}$
have inverse biholomorphic maps, denoted here by
$\varphi_{\alpha}^{\mathbb C}$, which give an atlas 
$(W_{\alpha}, \varphi_{\alpha}^{\mathbb C})$ of $M^{c}$.
Note that
$\varphi_{\alpha}^{\mathbb C}(W_{\alpha})
    \subset U_{\alpha} \times i \mathbb R^n $
is an open neighbourhood of
$\varphi_{\alpha}^{\mathbb C}(W_{\alpha} \cap M) 
    = U_{\alpha} \times \lbrace 0 \rbrace$ 
in $ \mathbb C^n $.
Let $x_1, \ldots, x_n$ be coordinates on 
$\varphi_{\alpha}^{\mathbb C}(W_{\alpha} \cap M) = U_{\alpha} \times \lbrace 0 \rbrace$
and $x_1 + i y_1, \ldots, x_n + i y_n$ coordinates on 
$\varphi_{\alpha}^{\mathbb C}(W_{\alpha})\subset U_{\alpha} \times i \mathbb R^n$.
There are uniquely defined smooth functions 
$f_1, \ldots, f_n : U_{\alpha} \rightarrow \mathbb R$
such that
$$\eta \vert_{U_{\alpha}}(x) = f_1(x_1, \ldots, x_n)dx_1 + 
           \ldots + f_n(x_1, \ldots, x_n)dx_n .$$
On $U_{\alpha} \times i \mathbb R^n \subset \mathbb C^n $
the function
$ \varrho_{\alpha} : U_{\alpha} \times i \mathbb R^n \rightarrow \mathbb R $,
defined by 
$$ \varrho_{\alpha}((x_1 + i y_1, \ldots, x_n + i y_n))
          = f_1(x_1, \ldots, x_n)y_1 + \ldots + f_n(x_1, \ldots, x_n)y_n ,$$
satisfies 
$\varrho_{\alpha} \vert_{U_{\alpha} \times \lbrace 0 \rbrace} \equiv 0$ 
%Since 
%$$ d^c \varrho_{\alpha} \vert_{U_{\alpha}} 
%       = \sum_{j=1}^{n} (y_j d^c f_j + f_j d^c y_j)
%       = \sum_{j=1}^{n} (y_j d^c f_j + f_j d x_j) ,$$
%one also obtains
and this implies that
$$(\iota_{U_{\alpha}})^{\ast}(d^c \varrho_{\alpha})(x_1, \ldots, x_n) 
        = \sum_{j=1}^{n} f_j (x_1, \ldots, x_n) d x_j
        = \eta \vert_{U_{\alpha}} (x_1, \ldots, x_n).$$
Denote by
$ \tilde \varrho_{\alpha} : \varphi_{\alpha}^{\mathbb C}(W_{\alpha})
                              \rightarrow \mathbb R $
the restriction 
$\varrho_{\alpha} \vert_{\varphi_{\alpha}^{\mathbb C}(W_{\alpha})}$.
Then
$\varrho_{\alpha} := (\varphi_{\alpha}^{\mathbb C})^{\ast}(\tilde \varrho_{\alpha})
                   = \tilde \varrho_{\alpha} \circ \varphi_{\alpha}^{\mathbb C}
                 : W_{\alpha} \rightarrow \mathbb R$
has the properties
$  \varrho_{\alpha} \vert_{U_{\alpha} \times \lbrace 0 \rbrace} \equiv 0$
and 
$  (\iota_{U_{\alpha}})^{\ast} (d^c \varrho_{\alpha})
     = \eta \vert_{U_{\alpha}} $
for the embedding 
$\iota_{U_{\alpha}} : U_{\alpha} \hookrightarrow W_{\alpha}$.
%and $\varrho_{\alpha} \vert_{U_{\alpha}} \equiv 0$.
If $p \in U_{\alpha}$,
the sets $U(p) := U_{\alpha}$ and $W(p) := W_{\alpha}$
have the desired properties. %stated in the Proposition.
These locally defined function can be patched together to obtain a function
$\varrho : M^c \rightarrow \mathbb R$
such that $\eta = \iota_M^{\ast} (d^c \varrho)$.
So far, symmetries are not yet considered.
%
%
%Applying Proposition \ref{extension-chart}
There is a Stein complexification $M^c$ of $M$
and an atlas $(W_{\alpha}, \varphi_{\alpha}^{\mathbb C})$
such that 
$(U_{\alpha} := M \cap W_{\alpha}, \varphi_{\alpha}^{\mathbb C} \vert_{U_{\alpha}})$
is an atlas $(U_{\alpha}, \varphi_{\alpha}^{\mathbb C} \vert_{U_{\alpha}})$
and a function
$ \varrho_{\alpha} : W_{\alpha} \rightarrow \mathbb R $
with the properties
\begin{equation}\label{eta-iota}
  \varrho_{\alpha} \vert_{U_{\alpha} \times \lbrace 0 \rbrace} \equiv 0
 \mbox{ and }
  (\iota_{U_{\alpha}})^{\ast} (d^c \varrho_{\alpha})
     = \eta \vert_{U_{\alpha}}
\end{equation}
for the embedding 
$\iota_{U_{\alpha}} : U_{\alpha} \hookrightarrow W_{\alpha}$.
%and $\varrho_{\alpha} \vert_{U_{\alpha}} \equiv 0$.
After shrinking and refining there is an atlas
$(V_{\beta}, \varphi_{\beta})_{\beta \in J}$
of $M^{c}$ with a partition of unity
$(\chi_{\beta})_{\beta \in J}$.
Since every $V_{\beta} \subset W_{\alpha(\beta)}$
for some $\alpha(\beta)$,
it is possible to define
$\varrho_{\beta} := \varrho_{\alpha(\beta)} \vert_{V_{\beta}}$.
Property (\ref{eta-iota}) implies that for
%$M \cap V_{\beta} \not= \emptyset$
the case that $M \cap V_{\beta}$ is non-empty,
%\begin{equation}\label{cut-eta-iota}
%\iota_{M \cap V_{\beta}}^{\ast} (d^c \varrho_{\beta})
%     = \eta \vert_{M \cap V_{\beta}}
%\end{equation}
$ (\iota_{M \cap V_{\beta} \hookrightarrow M^c \cap V_{\beta}})^{\ast} (d^c \varrho_{\beta})
     = \eta \vert_{M \cap V_{\beta}} $
and
\begin{equation}\label{rho=0}
 \varrho_{\beta} \vert_{M \cap V_{\beta}} \equiv 0 .
\end{equation}
Define now for every function $\varrho_{\beta}$
the smooth functions
\[ \begin{array}{rcl}
     \chi_{\beta} \cdot \varrho_{\beta} : 
     M^{c} & \rightarrow & \mathbb R \\
     x & \mapsto &
     \left\{  \begin{array}{rcl}
                (\chi_{\beta} \cdot \varrho_{\beta})(x)
                  & \mathrm{for} & x \in V_{\beta} \\
                0 & \mathrm{for} & x \in M^{c}\setminus V_{\beta}.
             \end{array} \right .
   \end{array} \]
%There are smooth functions on $M^{c}$ and 
The function
%\[ \begin{array}{rcl}
%    \varrho : M^{c} & \rightarrow & \mathbb R \\
%              x & \mapsto &
%              \sum_{\beta} (\chi_{\beta} \cdot \varrho_{\beta})(x)
%   \end{array} \]
$\varrho : M^{c} \rightarrow \mathbb R $,
$ x \mapsto \sum_{\beta} (\chi_{\beta} \cdot \varrho_{\beta})(x) $,
is well-defined by local finiteness and smooth, too.
%Since
%$$ d^c \varrho = \sum_{\beta} \chi_{\beta} \cdot (d^c \varrho_{\beta})
%                           + \varrho_{\beta} \cdot (d^c \chi_{\beta}), $$
Property (\ref{rho=0}) implies
$$ \iota_M^{\ast}(d^c \varrho)
     = \sum_{\beta} \chi_{\beta} \circ \iota_M \cdot
                     \iota_{M}^{\ast} (d^c \varrho_{\beta})
         + 0 \cdot d^c \chi_{\beta} 
     = \sum_{\beta} \chi_{\beta} \circ \iota_M \cdot \eta 
     = \eta. %\mathrm{ \ by \ of \ the \ properties \ of \ a \
             %        partition \ of \ unity}.
$$
Possibly after shrinking $M^c$
%Lemma \ref{Kahler-Lemma} provides
a strictly plurisubharmonic function 
$ \nu : M^{c} \rightarrow \mathbb R $
with the property $\iota_M^{\ast}(d^c \nu) = 0$
%can be added to $\varrho$ 
such that 
$\varrho - \nu$ is strictly plurisubharmonic
on an open neighbourhood of $M$ 
and still satisfies $\iota_M^{\ast}(d^c (\varrho - \nu)) = 0$.
The construction of a function $\nu$ with these properties can be found in 
(\cite{HHL}, Lemma 2). 
Now let $M$ be a $K$-manifold and let $\eta$ be a $K$-invariant $1$-form.
%
%
%There is a real analytic structure on $M$ 
%such that $K \times M \rightarrow M$ is a real analytic action map.
There is a $K$-equivariant complexification $M^c$ of $M$
(\cite{Heinzner-equivariant-extensions}, \cite{Japaner}, Theorem 1.3).
In particular $K$ acts on $M^c$ by holomorphic transformations
and $\iota_M : M \hookrightarrow M^c$ is a $K$-equivariant embedding.
%Note that every neighbourhood of $M$ in $M^c$ contains a $K$-invariant one.
Perhaps after shrinking to a smaller $K$-invariant complexification 
$M^c$ has a smooth strictly plurisubharmonic function 
$\tilde \varrho : M^c \rightarrow \mathbb R$ such that
$ \iota_M^{\ast}(d^c \tilde \varrho) = \eta $
%by Proposition \ref{compl-dc-rho-eta}.
as shown above.
Then $\varrho(x) := \int_K \tilde \varrho(\orbit{k^{-1}}{x}) dk$
defines a $K$-invariant strictly plurisubharmonic function on $M^c$ such that
\begin{align*}
& \iota_M^{\ast}(d^c \varrho) 
      = \iota_M^{\ast} d^c (\int_K \tilde \varrho \circ \psi_{k^{-1}} dk)
      = \iota_M^{\ast} (\int_K \psi_{k^{-1}}^{\ast} (d^c \tilde \varrho) dk) \\
& \phantom{\iota_M^{\ast}(d^c \varrho)}  
      = \int_K \psi_{k^{-1}}^{\ast}(\iota_M^{\ast} (d^c \tilde \varrho)) dk 
      = \int_K \eta dk
      = \eta, 
\end{align*}
%where $\psi : K \times M^c \rightarrow M^c$ is the $K$-action on $M^c$.
where $\psi_k : M^c \rightarrow M^c$, $\psi_k(x) = \orbit{k}{x}$, for all $k \in K$.
\end{proof}

%\bigskip

%\section{Equivariant extensions in the case of proper group actions}\label{Equivariant extensions in the case of proper group actions}

%\bigskip

%An extendable Lie group is characterized by 
%the injectivity of the canonical $G$-equivariant homomorphism 
%$\iota_G : G \rightarrow G^{\mathbb C}$, 
%where $G^{\mathbb C}$ is the universal complexification.
%The aim of the second section is to generalize 
%the results of the previous section to proper actions 
%of extentable
%%\footnote{An extendable Lie group is characterized by the injectivity of the canonical $G$-equivariant homomorphism $\iota_G : G \rightarrow G^{\mathbb C}$, where $G^{\mathbb C}$ is the universal complexification.}  
%Lie groups with finitely many connected components.
%In this setting
%Theorem \ref{Komplexifizierungssatz}
%shows that a $G$-invariant $1$-form $\eta$
%can be extended equivariantly to a complexification
%$\iota_M : M \hookrightarrow M^c$ as 
%$ \iota_M^{\ast} (d^c \varrho) = \eta$,
%where $\varrho : M^c \rightarrow \mathbb R$
%is $G$-invariant and strictly plurisubharmonic.

\bigskip

%\subsection{Slice properties and equivariant complexifications of proper actions}

\subsection{Equivariant extensions for the case of proper actions}

%\subsection{Extensions of invariant forms for proper actions}\label{Extensions of invariant forms for proper actions}

\bigskip

An extendable Lie group is characterized by 
the injectivity of the canonical $G$-equivariant homomorphism 
$\iota_G : G \rightarrow G^{\mathbb C}$, 
where $G^{\mathbb C}$ is the universal complexification.
The aim of this subsection is the following result.

\medskip

\begin{theorem}\label{Komplexifizierungssatz}
Let $G$ be an extendable Lie group
with finitely many connected components
that acts properly on a manifold $M$
and let $K$ be a maximal compact subgroup of $G$.
Let $\eta$ be a smooth $G$-invariant $1$-form on $M$.
The slice $S \subset M$ is embedded in a Stein 
$K^{\mathbb C}$-manifold $S^{\mathbb C}$
such that $M = G \times^K S$ is complexified by a 
$G \times K$-invariant Stein domain 
$M^c \subset G^{\mathbb C} \times^{K^{\mathbb C}} S^{\mathbb C}$.
Then there is a $G$-invariant strictly plurisubharmonic function 
$\varrho: M^c \rightarrow \mathbb R $
such that 
$$ \iota_M^{\ast} d^c \varrho = \eta.$$
\end{theorem}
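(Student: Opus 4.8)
The plan is to reduce the proper-action case to the compact case already handled in Proposition \ref{compact-extension} by means of the slice model $M = G \times^K S$, and then to average carefully over the non-compact directions. First I would recall the slice theorem: since $G$ acts properly on $M$ with maximal compact subgroup $K$, near any orbit $M$ is $G$-equivariantly diffeomorphic to $G \times^K S$ for a suitable $K$-slice $S$, and by the cited results $S$ embeds in a Stein $K^{\mathbb C}$-manifold $S^{\mathbb C}$ so that $G^{\mathbb C} \times^{K^{\mathbb C}} S^{\mathbb C}$ contains a $G \times K$-invariant Stein domain $M^c$ complexifying $M$. I would then pull the $G$-invariant form $\eta$ back along the quotient map $\pi_K : G \times S \to M = G \times^K S$ to get a $G \times K$-invariant $1$-form on $G \times S$; since $G \times S$ is itself a manifold acted on by the compact group $K$ (with $G$ acting by left translations commuting with the $K$-action), the strategy is to apply the compact-group construction on the $S$-factor while keeping track of the $G$-equivariance built into the product.

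The key steps, in order, are: (1) transfer $\eta$ to a $G\times K$-invariant analytic $1$-form $\hat\eta$ on $G \times S$; (2) build, on the complexification $\Omega \subset G^{\mathbb C}\times S^{\mathbb C}$, a strictly plurisubharmonic function by combining a $G$-invariant Kähler potential on the $G^{\mathbb C}$-factor (which exists since $G^{\mathbb C}/G$ carries an invariant strictly plurisubharmonic exhaustion, e.g.\ via a maximal compact and the polar decomposition) with the local potentials produced exactly as in the proof of Proposition \ref{compact-extension} on the $S^{\mathbb C}$-factor, glued by a $K$-invariant partition of unity obtained by averaging over $K$; (3) average the resulting function over $K$ as in the last display of the proof of Proposition \ref{compact-extension} to make it $K^{\mathbb C}$-invariant enough to descend along $\pi_{K^{\mathbb C}} : \Omega \to M^c$; (4) check that the descended function $\varrho$ on $M^c$ is $G$-invariant (automatic, since the $G$-action on $G^{\mathbb C}\times^{K^{\mathbb C}} S^{\mathbb C}$ is by left translation on the $G^{\mathbb C}$-factor and our potential is left-$G$-invariant there) and strictly plurisubharmonic (a sum of a strictly plurisubharmonic function and a plurisubharmonic one, after possibly shrinking $M^c$ and subtracting a correction term $\nu$ as in the compact case, using \cite{HHL}, Lemma 2); and (5) verify $\iota_M^{\ast} d^c \varrho = \eta$ by restricting to $M = G\times^K S$, where the $G^{\mathbb C}$-potential contributes nothing because its $d^c$ vanishes on the totally real submanifold $G \subset G^{\mathbb C}$ (the potential being, up to the correction, pulled back from a real-analytic function vanishing to appropriate order on $G$), and the $S^{\mathbb C}$-part restricts to $\eta$ by the local computation of Proposition \ref{compact-extension}.

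The main obstacle I expect is the compatibility of the two averaging/descent operations: the construction must produce a function on $\Omega$ that is genuinely $K^{\mathbb C}$-invariant (not merely $K$-invariant) so that it descends to the associated bundle $M^c = G^{\mathbb C}\times^{K^{\mathbb C}} S^{\mathbb C}$, yet the strictly-plurisubharmonic correction and the potential for $\eta$ are built with real-analytic, $K$-invariant data on $S$ and only then complexified. The standard device here is that a $K$-invariant real-analytic object on $S$ extends \emph{uniquely} to a holomorphic (hence $K^{\mathbb C}$-invariant, by the identity theorem applied to the $K^{\mathbb C}$-action) object on a neighbourhood in $S^{\mathbb C}$; I would invoke this uniqueness-of-extension principle to promote $K$-invariance to $K^{\mathbb C}$-invariance, exactly as is implicit in the very construction of $S^{\mathbb C}$ as a $K^{\mathbb C}$-manifold. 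A secondary technical point is ensuring that after all the shrinking (to keep Steinness, to keep strict plurisubharmonicity of $\varrho - \nu$, and to stay inside the domain where the biholomorphic chart extensions are defined) the resulting $M^c$ is still $G\times K$-invariant; this is handled by taking $G\times K$-saturations, which remain Stein and open by properness of the action together with the fact that $G^{\mathbb C}$-saturations of Stein domains in Stein $G^{\mathbb C}$-spaces are again Stein in this setting.
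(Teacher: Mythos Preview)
Your overall architecture (pull $\eta$ back to $G\times S$, build a $G\times K$-invariant potential on $G^c\times S^c$, then descend to $M^c$) matches the paper's, but the descent mechanism you propose does not work, and this is a genuine gap.

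You want to make the potential $K^{\mathbb C}$-invariant so that it descends along $\pi_{K^{\mathbb C}}:G^{\mathbb C}\times S^{\mathbb C}\to G^{\mathbb C}\times^{K^{\mathbb C}}S^{\mathbb C}$. But a strictly plurisubharmonic function on $\Omega$ can \emph{never} be invariant under the non-compact group $K^{\mathbb C}$: the $K^{\mathbb C}$-orbits are positive-dimensional complex submanifolds, and a strictly plurisubharmonic function cannot be constant along them. Your proposed fix, namely that $K$-invariance of a real-analytic object promotes to $K^{\mathbb C}$-invariance by the identity theorem, is valid only for \emph{holomorphic} objects; the potential $\varrho$ is real-valued and this argument does not apply. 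The paper circumvents this by \emph{K\"ahler reduction} rather than naive descent: the $K$-invariant strictly plurisubharmonic $\varrho$ on $\Omega$ defines a $K$-moment map $\mu$, one checks $G\times S\subset\mu^{-1}(0)$, and then $M^c:=\mu^{-1}(0)/K$ carries the reduced potential $\varrho_{\mathrm{red}}$. The Kempf--Ness type statement (Proposition~\ref{free-proper-case} and Proposition~\ref{Kempf-Ness-free-case}) identifies $\mu^{-1}(0)/K$ locally biholomorphically with an open set in $G^{\mathbb C}\times^{K^{\mathbb C}}S^{\mathbb C}$, which is how one lands in the stated complexification without ever needing $K^{\mathbb C}$-invariance of $\varrho$.

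There is a second, smaller issue in your step~(2) and step~(5). The pullback $\pi_K^{\ast}\eta$ decomposes (Proposition~\ref{prop-frame}) as $\sum_j \pi_S^{\ast}(f_j)\,\pi_G^{\ast}(\beta_j)+\pi_S^{\ast}(\sigma_S)$, with genuine cross terms depending on \emph{both} factors. A potential of the shape ``Kähler potential on $G^{\mathbb C}$ plus Proposition~\ref{compact-extension} potential on $S^{\mathbb C}$'' cannot reproduce the cross terms; in particular your claim that ``the $G^{\mathbb C}$-potential contributes nothing because its $d^c$ vanishes on $G$'' is incompatible with recovering the $\sum_j f_j\beta_j$ part of $\pi_K^{\ast}\eta$. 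The paper instead builds $\Theta(h,s)=\sum_j F_j(s)\varrho_j(h)$ with $\iota_G^{\ast}d^c\varrho_j=\beta_j$ and $\varrho_j|_G\equiv 0$ (Proposition~\ref{G-times-K-invariante-function}), so that $(\iota_{G\times S})^{\ast}d^c\Theta=\sum_j f_j\beta_j$ exactly, and then adds $\pi_{S^c}^{\ast}\theta$ for the $\sigma_S$-part before $K$-averaging.
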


\bigskip

The proof of this at the end of the section needs some preperation.
Let $G$ be an extendable 
%\footnote{An extendable Lie group is characterized by the injectivity of the canonical $G$-equivariant homomorphism $\iota_G : G \rightarrow G^{\mathbb C}$, where $G^{\mathbb C}$ is the universal complexification.}
Lie group with finitely many connected components 
and let $K$ be a maximal compact subgroup of $G$.
By a theorem of Abels (\cite{Abels})
there is a $K$-invariant submanifold $S$ in $M$
such that the map
\[ \begin{array}{rcl}   
        G \times^{K} S & \rightarrow & M \\
        \lbrack g, s \rbrack & \mapsto & g \cdot s
      \end{array} \] 
is a diffeomorphism.
Here, $G \times^K S$ denotes the geometric quotient of $G \times S$
with respect to the free $K$-action
\[ \begin{array}{rcl}
    K \times (G \times S) & \rightarrow & G \times S \\
    (k, (g,s)) & \mapsto & (gk^{-1}, \orbit{k}{s})
   \end{array} \]
and $\lbrack g, s \rbrack := \pi_K(g, s)$,
where $\pi_K : G \times S \rightarrow G \times^K S $ 
is the canonical projection onto their geometric quotient.
%By a result of (\cite{Kutzschebauch-uniqueness}, \cite{Illman}), %\cite{Japaner},
There is a real analytic structure on 
$M \cong G \times^K S$ such that 
the action map $G \times M \rightarrow M$,
the slice $S$ and the $K$-action on $S$ 
may be assumed to be real analytic (\cite{Illman}, \cite{Kutzschebauch-uniqueness}).
%
%In \cite{HHK}\footnote{\cite{HHK}, Section 7, p. 264, Proposition 4, Proposition 4' and Proposition 5.}
In (\cite{HHK}, Section 7, Proposition 4, 4' and 5)
a complexification of $G \times^K S$
is constructed with the help of
a $G$-complexification $G^c$ of $G$
and a $K$-complexification $S^c$ of $S$ 
as the quotient $\ktquot{G^c \times S^c}{K}$.
%is a $G$-complexification of $G \times^K S$, where $K$ acts diagonally.
In this quotient two points $p_1$ and $p_2$
are identified if $f(p_1) = f(p_2)$
for every $K$-invariant holomorphic function $f$.
%Since $G$ is assumed to be extendable here,
%a modification of the approach in \cite{HHK}
%proves that a $G$-complexification
%can be realized as a $G^{\mathbb C}$-manifold:
Since $G$ is assumed to be extendable here,
a $G$-complexification can be realized as a $G^{\mathbb C}$-manifold
as in \cite{HHK}. The proof is included here for the readers' convenience.

\bigskip

\begin{prop}\label{KC-mfd}
Let an extendable Lie group $G$ act properly 
and real analy\-tically on a manifold $M = G \times^K S$,
where $K$ is a maximal compact subgroup of $G$.
Then there is a $K^{\mathbb C}$-manifold $S^{\mathbb C}$
%in which a $K$-complexification $S^c$ is embedded 
such that a $G$-invariant domain $\Omega$ 
in $M^{\mathbb C} = G^{\mathbb C} \times^{K^{\mathbb C}} S^{\mathbb C}$
is a $G$-complexification of $M = G \times^K S$.
\end{prop}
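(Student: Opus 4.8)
The plan is to construct the $K^{\mathbb{C}}$-manifold $S^{\mathbb{C}}$ and the domain $\Omega$ in three stages. First I would fix a slice $S$ and a maximal compact subgroup $K$ as in Abels' theorem, so that $M \cong G \times^K S$, with all data real analytic. Applying Proposition \ref{compact-extension} (or rather the equivariant complexification results of Heinzner and Kutzschebauch cited there) to the compact group $K$ acting real analytically on $S$, I obtain a $K$-equivariant Stein complexification $S \hookrightarrow S^{\mathbb{C}}$. The point to check here is that, since $G$ is extendable, $K \hookrightarrow K^{\mathbb{C}}$ is injective and $K^{\mathbb{C}}$ sits inside $G^{\mathbb{C}}$ as the complexification of the maximal compact; by universality of the $K$-complexification and the fact that $K$ is compact, the $K$-action on $S^{\mathbb{C}}$ extends to a holomorphic $K^{\mathbb{C}}$-action. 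This is the content of passing from a $K$-complexification to a genuine $K^{\mathbb{C}}$-manifold, and it uses extendability in an essential way.

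Second, I would form the associated bundle $M^{\mathbb{C}} := G^{\mathbb{C}} \times^{K^{\mathbb{C}}} S^{\mathbb{C}}$, where $K^{\mathbb{C}}$ acts on $G^{\mathbb{C}} \times S^{\mathbb{C}}$ by $(k,(g,s)) \mapsto (gk^{-1}, k\cdot s)$; since $K^{\mathbb{C}}$ is reductive and acts freely on $G^{\mathbb{C}}$ by right translation, this quotient is a complex manifold and the projection $G^{\mathbb{C}} \times S^{\mathbb{C}} \to M^{\mathbb{C}}$ is a holomorphic principal $K^{\mathbb{C}}$-bundle. The inclusions $G \hookrightarrow G^{\mathbb{C}}$ and $S \hookrightarrow S^{\mathbb{C}}$ induce a $G$-equivariant map $\iota_M : M = G \times^K S \to M^{\mathbb{C}}$, which I claim is a real analytic, closed, totally real embedding onto its image near $M$; this is checked fibrewise over the base $G^{\mathbb{C}}/K^{\mathbb{C}}$, using that $G/K \hookrightarrow G^{\mathbb{C}}/K^{\mathbb{C}}$ is a totally real embedding (standard for the Cartan decomposition, since $G^{\mathbb{C}}/K^{\mathbb{C}} \cong G^{\mathbb{C}}/G \cdot (\text{compact part})$ contains $G/K$ totally really) together with the corresponding statement for $S \hookrightarrow S^{\mathbb{C}}$.

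Third, I would identify a $G$-invariant open domain $\Omega \subset M^{\mathbb{C}}$ that is an actual $G$-complexification of $M$, i.e. a Stein (or at least holomorphically separable with the right universal property) $G$-invariant Stein domain containing $\iota_M(M)$ on which $G$ acts properly. For this I invoke the construction of (\cite{HHK}, Section 7, Prop. 4, 4', 5): the quotient $\ktquot{G^c \times S^c}{K}$ constructed there, where two points are identified when they agree on all $K$-invariant holomorphic functions, coincides in the extendable case with $G^{\mathbb{C}} \times^{K^{\mathbb{C}}} S^{\mathbb{C}}$ because the $K$-invariant holomorphic functions on $G^{\mathbb{C}} \times S^{\mathbb{C}}$ already separate the $K^{\mathbb{C}}$-orbits there; hence the $\ktquot{}{}$-quotient and the geometric $\times^{K^{\mathbb{C}}}$-quotient agree. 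Shrinking $S^{\mathbb{C}}$ to a suitable $K^{\mathbb{C}}$-invariant Stein neighbourhood of $S$ (possible since $S^{\mathbb{C}}$ is Stein and $K$ is compact, so $K^{\mathbb{C}}$-invariant Stein neighbourhoods form a basis) yields a $G$-invariant Stein domain $\Omega = G^{\mathbb{C}} \times^{K^{\mathbb{C}}} S^{\mathbb{C}}$ on which $G$ acts properly (properness descends from the proper $G$-action on $M$ and the fibre bundle structure), and which satisfies the universal property of a $G$-complexification.

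The main obstacle I expect is the second and third stages fused together: verifying that the naturally defined quotient $G^{\mathbb{C}} \times^{K^{\mathbb{C}}} S^{\mathbb{C}}$ genuinely carries the universal property of a $G$-complexification of $M$ — that is, that holomorphic functions on it restrict isomorphically onto a suitable algebra on $M$ and that germs of $G$-equivariant maps out of $M$ extend. Equivalently, the subtle point is identifying the $\ktquot{}{}$-quotient of \cite{HHK} with the geometric quotient $\times^{K^{\mathbb{C}}}$; this is exactly where extendability is needed so that $K^{\mathbb{C}} \subset G^{\mathbb{C}}$ and the $K^{\mathbb{C}}$-action on $S^{\mathbb{C}}$ is available globally rather than just as an abstract complexification. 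Everything else — the existence of $S^{\mathbb{C}}$, the bundle structure, properness of the $G$-action, and the embedding being totally real — follows from cited results applied carefully.
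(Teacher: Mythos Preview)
Your overall strategy---complexify $S$ to a $K^{\mathbb C}$-manifold $S^{\mathbb C}$, form $M^{\mathbb C} = G^{\mathbb C} \times^{K^{\mathbb C}} S^{\mathbb C}$, and embed $M$ as a totally real $G$-invariant submanifold---matches the paper's. The difference lies in how the domain $\Omega$ with the $G$-complexification property is produced.

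You attempt to build $\Omega$ directly inside $M^{\mathbb C}$ by shrinking and then checking the universal property, and you correctly flag this verification as the main obstacle. The paper sidesteps it entirely: it takes an abstract $G$-complexification $M^c$ of $M$, whose existence is already guaranteed by \cite{HHK}, and then invokes \cite{HHK}, Corollary~7, to embed a $G$-invariant neighbourhood of $M$ in $M^c$ openly, holomorphically and $G$-equivariantly into $M^{\mathbb C}$. The image $\Omega$ is then automatically a $G$-complexification because it is $G$-equivariantly biholomorphic to (an open piece of) one. No direct verification of the universal property, and no identification of the categorical $K$-quotient with the geometric $K^{\mathbb C}$-quotient, is required. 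What this buys is precisely the elimination of the obstacle you anticipate; what your route would buy, if carried through, is a more self-contained construction that does not rely on the prior existence of $M^c$.

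There is also a concrete gap in your third stage: the assertion that ``$K^{\mathbb C}$-invariant Stein neighbourhoods form a basis'' because $K$ is compact is not justified. Compactness of $K$ yields a basis of $K$-invariant Stein neighbourhoods of $S$, but $K^{\mathbb C}$ is noncompact and a $K^{\mathbb C}$-invariant open set containing $S$ must contain the full $K^{\mathbb C}$-saturation of $S$; there is no reason such sets shrink down to $S$. The paper avoids this by not shrinking $S^{\mathbb C}$ at this stage at all and by citing \cite{Heinzner-geometric-invariant-stein}, Section~6.6, directly for a Stein $K^{\mathbb C}$-space $S^{\mathbb C}$ containing $S$. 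Similarly, your properness claim for the $G$-action on $\Omega$ (``descends from the proper $G$-action on $M$'') would need an argument, whereas in the paper properness on $\Omega$ is inherited from the transported $G$-complexification $M^c$.
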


%The complexification results can be summarized in the following Proposition.

%\begin{prop}\label{embedding-in G-C-mfd}
%Let the extendable Lie group $G$ 
%with finitely many components
%act properly on a manifold $M$
%and let $M^c$ be an equivariant $G$-complexification.
%Then a $G$-invariant, open neighbourhood $\Omega$ of $M$ in $M^c$
%can be openly, $G$-equivariantly and holomorphically
%embedded in a $G^{\mathbb C}$-manifold $M^{\mathbb C}$.
%\end{prop}

%\bigskip

\begin{proof}[{\bf Proof}]
The slice $S$ can be $K$-equivariantly complexified
in a Stein $K^{\mathbb C}$-space $S^{\mathbb C}$
(\cite{Heinzner-geometric-invariant-stein}, Section 6.6).
Since $G$ is extendable, 
it can be complexified $G$-equivariantly 
to a $G$-invariant open domain $G^c$ in $G^{\mathbb C}$.
Then $M \cong G \times^K S$ can be $G$-equivariantly embedded in 
$M^{\mathbb C} := G^{\mathbb C} \times^{K^{\mathbb C}} S^{\mathbb C}$
as a totally real submanifold.
If $M^c$ is a $G$-complexification of $M$, 
a $G$-invariant domain $\Omega$ containing $M$ in $M^c$
can be $G$-equivariantly, holomorphically and openly embedded 
in a neighbourhood of $M$ in $M^{\mathbb C}$
(\cite{HHK}, Corollary 7). %(p. 266).
\end{proof}

\bigskip

Following the notation introduced above
consider a proper and real analytic action on $M$. 
Then $M = G \times^K S$.
Let $\eta$ be a $G$-invariant smooth $1$-form on $M$.
Denote by $\pi_G : G \times S \rightarrow G$
and $\pi_S : G \times S \rightarrow S$
the projections on the first and on the second factor 
respectively.

\bigskip

\begin{prop}\label{prop-frame}
Let $\eta$ be a $G$-invariant, smooth $1$-form on $G \times^K S$.
Let $\beta_1, \ldots, \beta_n$
be a basis of $G$-invariant $1$-forms on $G$.
Then there are smooth functions 
$f_1, \ldots, f_n: S \rightarrow \mathbb R$
and a $K$-invariant $1$-form $\sigma_S$ on $S$ such that 
$$ \pi_K^{\ast}\eta  
        %  = f_1 \beta_1 + \ldots + f_n \beta_n + \sigma_S 
     = \sum_{j=1}^{n} \pi_S^{\ast}(f_j) \cdot \pi_G^{\ast}(\beta_j) 
        +  \pi_S^{\ast}(\sigma_S) $$ 
such that $\sum_{j=1}^{n} \pi_S^{\ast}(f_j) \cdot \pi_G^{\ast}(\beta_j)$ 
is a $G \times K$-invariant $1$-form.
\end{prop}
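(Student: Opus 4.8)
The plan is to exploit the product structure of $G\times S$ and the freeness of the $K$-action to split $\pi_K^\ast\eta$ into a ``$G$-direction'' part and an ``$S$-direction'' part, then to average the $S$-direction part over $K$ so that the whole first summand becomes $G\times K$-invariant. First I would fix the basis $\beta_1,\dots,\beta_n$ of $G$-invariant (say, left-invariant) $1$-forms on $G$; at each point $(g,s)\in G\times S$ the pullbacks $\pi_G^\ast\beta_1,\dots,\pi_G^\ast\beta_n$ span the conormal space to the fibre $\{g\}\times S$, i.e.\ they span $(\ker d\pi_S)^\perp$ inside $T^\ast_{(g,s)}(G\times S)$, while $T^\ast_s S$ pulls back via $\pi_S^\ast$ to a complement. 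Hence $\pi_K^\ast\eta$ can be written uniquely as $\sum_j h_j\,\pi_G^\ast\beta_j + \pi_S^\ast$-part, where the coefficients $h_j\colon G\times S\to\mathbb R$ are smooth and the remaining term is, fibrewise over $G$, a $1$-form pulled back from $S$. The point is to show these pieces have the stated invariance.

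Next I would extract the functions $f_j$ and the form $\sigma_S$. Because $\eta$ is $G$-invariant and the $\beta_j$ are left-$G$-invariant, the coefficients $h_j$ are constant along the $G$-orbits in the first factor — concretely $h_j(g,s)$ is independent of $g$, so $h_j = \pi_S^\ast(\tilde f_j)$ for smooth $\tilde f_j\colon S\to\mathbb R$. Similarly the leftover term descends to a (possibly not yet $K$-invariant) $1$-form $\tilde\sigma_S$ on $S$, so that $\pi_K^\ast\eta = \sum_j \pi_S^\ast(\tilde f_j)\,\pi_G^\ast\beta_j + \pi_S^\ast(\tilde\sigma_S)$. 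Now I invoke $K$-invariance: $\pi_K^\ast\eta$ is automatically $K$-invariant since $\eta$ lives on the quotient $G\times^K S$. Averaging the identity over $K$ with respect to Haar measure, and using that the $K$-action on $G\times S$ is $(k,(g,s))\mapsto(gk^{-1},k\cdot s)$, one checks how $K$ moves the two summands; the cleanest route is to define $f_j$ and $\sigma_S$ as the $K$-averages forced by this computation, so that $\sum_j \pi_S^\ast(f_j)\,\pi_G^\ast\beta_j$ becomes $G\times K$-invariant while the sum of both terms is unchanged, and then set $\pi_S^\ast(\sigma_S)$ to be the (necessarily $K$-invariant) difference.

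I expect the main obstacle to be the bookkeeping in the averaging step: the $K$-action mixes the $G$-factor and the $S$-factor, so $k^\ast(\pi_G^\ast\beta_j)$ is again a combination $\sum_i a_{ij}(k)\,\pi_G^\ast\beta_i$ (with $a(k)$ the matrix of right-translation by $k^{-1}$ in the fixed left-invariant frame) pulled back from $G$ — it does not acquire $\pi_S$-components, which is exactly why the split survives — and this is the fact that makes the first summand averageable into a $G\times K$-invariant form. One must verify that the coefficient functions, after averaging, are still pulled back from $S$ (they are, since right-translation on $G$ commutes with projection to $S$ and the $a_{ij}(k)$ depend only on $k$), and that no $S$-direction term leaks into the first summand under averaging. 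Once that linear-algebra/Haar-averaging lemma is in hand, the decomposition and all the claimed invariances follow, and the leftover $\pi_S^\ast(\sigma_S)$ inherits $K$-invariance for free as the difference of two $K$-invariant forms.
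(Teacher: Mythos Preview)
Your approach is correct and close to the paper's, but the Haar-averaging step you propose is unnecessary. The observation you yourself make in the last paragraph --- that $\psi_k^\ast(\pi_G^\ast\beta_j)$ remains a combination $\sum_i a_{ij}(k)\,\pi_G^\ast\beta_i$ with no $\pi_S^\ast$-component, and symmetrically $\psi_k^\ast(\pi_S^\ast\tilde\sigma_S)=\pi_S^\ast(k^\ast\tilde\sigma_S)$ stays purely in the $S$-direction --- says precisely that the $K$-action preserves the splitting $T^\ast_{(g,s)}(G\times S)=\pi_G^\ast T^\ast_g G\oplus \pi_S^\ast T^\ast_s S$. Since $\pi_K^\ast\eta$ is $K$-invariant and the decomposition along this splitting is unique, both summands are automatically $K$-invariant; your $\tilde\sigma_S$ is already $K$-invariant and you may take $f_j=\tilde f_j$, $\sigma_S=\tilde\sigma_S$ without any averaging.

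The paper runs the same argument in the opposite order: it first sets $\sigma_S:=\iota_S^\ast(\pi_K^\ast\eta)$ via the slice embedding $\iota_S\colon s\mapsto(e,s)$, notes that $\pi_S^\ast\sigma_S$ is $G\times K$-invariant, and then checks that the difference $\pi_K^\ast\eta-\pi_S^\ast\sigma_S$ annihilates every $(0,v)$-vector and hence lies in the span of the $\pi_G^\ast\beta_j$; comparing coefficients under $G$-invariance produces the $f_j$ on $S$, and the first summand is $G\times K$-invariant as the difference of two such forms. Your frame-first decomposition and the paper's $\sigma_S$-first subtraction are two descriptions of the same canonical splitting, and neither needs an integral over $K$.
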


%\bigskip

\begin{proof}[{\bf Proof}]
The form $\pi_K^{\ast} \eta$ is a $G \times K$-invariant, 
smooth $1$-form on $G \times S$.
Let $\beta_1, \ldots, \beta_n$ be a basis of $G$-invariant $1$-forms on $G$;
then $\pi_G^{\ast}(\beta_1), \ldots, \pi_G^{\ast}(\beta_n)$
are their trivial extensions to $G \times S$.
The embedding 
$\iota_S : S \rightarrow G \times S$, $s \mapsto (e,s)$,
and the projection
$\pi_S : G \times S \rightarrow S$, $(g,s) \mapsto s$,
are $K$-equivariant if $K$ acts diagonally on $G \times S$ by 
$\orbit{k}{(g,s)} = (g k^{-1}, \orbit{k}{s})$.
Let $\sigma_S = \iota_S^{\ast}(\pi_K^{\ast} \eta)$;
then $\pi_S^{\ast} (\sigma_S)$ is $G \times K$-invariant 
and for every tangent vector 
$(0,v) \in T_{(g,s)}(G \times S) \cong T_g G \times T_s S $ 
$$(\pi_S^{\ast}(\sigma_S))(0,v) 
   = \sigma_S (D \pi_S (0,v)) 
   = (\pi_K^{\ast} \eta) (D \iota_S (D \pi_S (0,v))) 
   = (\pi_K^{\ast} \eta) (0,v). $$
It follows that
$(\pi_K^{\ast} \eta - \pi_S^{\ast}(\sigma_S)) (h,s) (0,v) = 0 $
for every $v \in T_s S$ and every $h \in G$.
In other words
$ (\pi_K^{\ast} \eta - \pi_S^{\ast}(\sigma_S)) (h,s) 
    \in T_h^{\ast}G \oplus \lbrace 0 \rbrace 
    \subset T_h^{\ast}G \oplus T_s^{\ast}S \cong T_{(h,s)} G \times S $.
This implies that there are smooth functions
$ \tilde f_1, \ldots, \tilde f_n$ % $ \in \mathcal{C}^{\infty}(G \times S)$
on $G \times S$ such that 
$$ (\pi_K^{\ast} \eta - \pi_S^{\ast}(\sigma_S)) (h,s) 
   = \sum_{j=1}^{n} \tilde f_j(h,s) \cdot \pi_G^{\ast}(\beta_j) .  $$
%Since the action map 
%$\psi_g : G \rightarrow G$, $h \mapsto gh$,
%has the invariance properties
%$$ (\psi_g)^{\ast} (\pi_K^{\ast} \eta) = \pi_K^{\ast} \eta 
%      \mbox{ and } 
%   (\psi_g)^{\ast} (\pi_S^{\ast}(\sigma_S)) = \pi_S^{\ast}(\sigma), $$
%the identity
%\begin{align*}
%  \sum_{j = 1}^{n} \tilde f_j (h,s) \cdot \pi_G^{\ast}(\beta_j)(h,s)
% & = (\pi_K^{\ast} \eta - \pi_S^{\ast}(\sigma_S)) (h,s) \\
%% & = (\psi_g)^{\ast} (\pi_K^{\ast} \eta - \pi_S^{\ast}(\sigma_S)) (h,s) \\
%% & = \sum_{j = 1}^{n} (\tilde f_j \circ \psi_g) (h,s) \cdot 
%%                       (\psi_g)^{\ast}(\pi_G^{\ast}(\beta_j))(h,s) \\
% & = \sum_{j = 1}^{n} \tilde f_j (gh,s) \cdot \pi_G^{\ast}(\beta_j)(h,s)
%\end{align*}
%%$$ \sum_{j = 1}^{n} \tilde f_j (h,s) \cdot \pi_G^{\ast}(\beta_j)(h,s)
%%  = (\pi_K^{\ast} \eta - \pi_S^{\ast}(\sigma_S)) (h,s) 
%%  = \sum_{j = 1}^{n} \tilde f_j (gh,s) \cdot \pi_G^{\ast}(\beta_j)(h,s) $$
%holds, where $\iota_S^{\ast}(\pi_K^{\ast} \eta) = \sigma_S$ as above.
Comparing coefficients implies that 
there are smooth functions $f_1,  \ldots, f_n$ on $S$ such that 
$$ (\pi_K^{\ast} \eta - \pi_S^{\ast}(\sigma_S)) (h,s) 
   = \sum_{j=1}^{n} \pi_S^{\ast}(f_j)(s) \cdot \pi_G^{\ast}(\beta_j). $$
\end{proof}

%\bigskip

%\subsection{Extensions of invariant forms in the case of free and proper actions}\label{Extensions of invariant forms in the case of free and proper actions}

\bigskip

%Proposition \ref{compl-dc-rho-eta}
Proposition \ref{compact-extension}
implies that 
there is a $K$-invariant strictly plurisubharmonic function
$ \varrho_S : S^c \rightarrow \mathbb{R} $
on an equivariant $K$-complexification $S^c$ of $S$
such that 
$\sigma_S = (\iota_{S \hookrightarrow S^c})^{\ast} (d^c \varrho_S)$. 
 %$  = \sigma_S $. % = h_1(s)\sigma_1 + \ldots + h_k(s)\sigma_k $.
%
%
Assume that the situation is arranged as in Proposition \ref{KC-mfd}.
Let $S^c$ be openly and $K$-equivariantly embedded 
in a $K^{\mathbb C}$-manifold $S^{\mathbb C}$
and let $G^c$ be a Stein $G$-complexification of $G$
which is $G$-equivariantly and openly embedded in $G^{\mathbb C}$.
In the next proposition, $G \times K$-invariant $1$-forms
%$\sum_{j=1}^{n} \pi_S^{\ast}(f_j) \cdot \pi_G^{\ast}(\beta_j)$
on $G \times S$ are going to be extended equivariantly to $G^c \times S^c$.

\bigskip

\begin{prop}\label{G-times-K-invariante-function}
For the smooth $G \times K$-invariant $1$-form
$\pi_K^{\ast}\eta$ on $G \times S$
there is a $G \times K$-invariant strictly plurisubharmonic function
$\varrho$ on some $G \times K$-invariant complexification $G^c \times S^c$ 
such that on a $G \times K$-invariant Stein domain $\Omega$
in $G^c \times S^c$ 
$$ (\iota_{G \times S})^{\ast} (d^c \varrho) = \pi_K^{\ast}\eta.$$
\end{prop}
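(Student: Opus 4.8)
The plan is to combine the frame decomposition of Proposition~\ref{prop-frame} with the compact-group extension result (Proposition~\ref{compact-extension}) applied to the slice $S$, together with a $G$-equivariant extension of the invariant $1$-forms on $G$ itself. By Proposition~\ref{prop-frame} we may write $\pi_K^{\ast}\eta = \sum_{j=1}^n \pi_S^{\ast}(f_j)\cdot\pi_G^{\ast}(\beta_j) + \pi_S^{\ast}(\sigma_S)$, where the first summand is already $G\times K$-invariant and $\sigma_S$ is a $K$-invariant $1$-form on $S$. The two summands will be complexified separately: the $\pi_S^{\ast}(\sigma_S)$-part uses Proposition~\ref{compact-extension} on $(S,\sigma_S)$ to produce a $K$-invariant strictly plurisubharmonic $\varrho_S$ on $S^c$ (hence on $S^{\mathbb C}$ after open equivariant embedding, possibly shrinking) with $\sigma_S = (\iota_{S\hookrightarrow S^c})^{\ast}(d^c\varrho_S)$, as already noted in the paragraph before the proposition; pulling back by $\pi_S$ and noting that $\pi_S$ extends holomorphically to $G^c\times S^c \to S^c$, the function $\pi_S^{\ast}\varrho_S$ is $G\times K$-invariant and satisfies $(\iota_{G\times S})^{\ast}d^c(\pi_S^{\ast}\varrho_S) = \pi_S^{\ast}\sigma_S$.

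For the first summand, I would treat $G$ by a parallel compact-type argument. Fixing a basis $\beta_1,\dots,\beta_n$ of left $G$-invariant $1$-forms on $G$ dual to a basis of $\mathfrak g$, each $\beta_j$ extends to a $G^{\mathbb C}$-invariant holomorphic $1$-form $\beta_j^{\mathbb C}$ on $G^{\mathbb C}$, hence on the $G$-invariant Stein domain $G^c\subset G^{\mathbb C}$. Similarly each $f_j: S\to\mathbb R$ extends, after possibly shrinking, to a smooth function on $S^c$ (real-analytically on a neighborhood, then patched); averaging over $K$ makes the extension $K$-invariant since the $f_j$ transform among themselves under $K$ according to the coadjoint-type action on the $\beta_j$, so more precisely one should extend the whole $S$-valued combination $\sum_j \pi_S^{\ast}(f_j)\pi_G^{\ast}(\beta_j)$ as a single $G\times K$-invariant object. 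Concretely, one writes this $1$-form locally as $d^c$ of a function built, exactly as in the local model of Proposition~\ref{compact-extension}, by replacing the "fiber coordinates" on $G$ by their imaginary parts, i.e. locally $\varrho_G \sim \sum_j (f_j^{\mathbb C})\cdot(\text{imaginary part in the }\beta_j\text{-direction})$, patch with a $G\times K$-invariant partition of unity on $G^c\times S^c$ (obtained by averaging an ordinary one over the compact group $K$ and using $G$-invariance of the cover, which is possible since the $G$-action on $G^c\times S^c$ is free and proper on the $G$-factor), and obtain a smooth $G\times K$-invariant $\varrho_G$ with $(\iota_{G\times S})^{\ast}d^c\varrho_G = \sum_j \pi_S^{\ast}(f_j)\pi_G^{\ast}(\beta_j)$.

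Having produced $\varrho_G$ and $\pi_S^{\ast}\varrho_S$, both $G\times K$-invariant with $d^c$ restricting correctly, set $\varrho := \varrho_G + \pi_S^{\ast}\varrho_S$ on $G^c\times S^c$. Then $(\iota_{G\times S})^{\ast}d^c\varrho = \pi_K^{\ast}\eta$ by linearity. The remaining point is strict plurisubharmonicity: $\varrho$ need not be strictly plurisubharmonic, but as in the final paragraph of the proof of Proposition~\ref{compact-extension} we may add a function $\nu$ on $G^c\times S^c$ with $(\iota_{G\times S})^{\ast}d^c\nu = 0$ and $\varrho-\nu$ strictly plurisubharmonic near $G\times S$ --- here one takes $\nu$ itself $G\times K$-invariant by averaging over $K$ (it is automatically $G$-invariant if built from a $G$-invariant exhaustion of the Stein $G$-space, or again averaged), so that $\varrho - \nu$ stays $G\times K$-invariant; the existence of $\nu$ follows from (\cite{HHL}, Lemma~2) applied on a Stein neighborhood, and the Stein domain $\Omega$ of the statement is then chosen inside the open set where $\varrho-\nu$ is strictly plurisubharmonic, using Proposition~\ref{KC-mfd} to keep it $G$-invariant. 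Replacing $\varrho$ by $\varrho-\nu$ finishes the argument.

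The main obstacle I expect is the equivariant patching for the $G$-factor: unlike $K$, the group $G$ is noncompact, so one cannot simply average over $G$, and one must instead exploit that $G$ acts freely on the $G$-factor of $G^c\times S^c$ to build a $G\times K$-invariant locally finite cover and partition of unity (average only over $K$, and transport over $G$-orbits), and check that the locally defined primitives built from the $f_j^{\mathbb C}$ and the $\beta_j^{\mathbb C}$ glue $G\times K$-equivariantly. The same noncompactness is why strict plurisubharmonicity must be repaired by an invariant $\nu$ rather than obtained directly from an averaging trick as in the compact case.
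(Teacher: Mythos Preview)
Your overall architecture matches the paper's: decompose $\pi_K^{\ast}\eta$ via Proposition~\ref{prop-frame}, handle $\pi_S^{\ast}\sigma_S$ by Proposition~\ref{compact-extension} on the slice, handle $\sum_j \pi_S^{\ast}(f_j)\,\pi_G^{\ast}(\beta_j)$ separately, add the two potentials, average over $K$, and repair strict plurisubharmonicity at the end. The difference is entirely in the $G$-factor step, and here you are making life harder than necessary.

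You propose to build $\varrho_G$ by a local ``imaginary part in the $\beta_j$-direction'' construction and then patch with a $G\times K$-invariant partition of unity, correctly flagging the noncompactness of $G$ as the obstacle. The paper sidesteps this obstacle completely: it invokes \cite[Lemma~3.3]{Stratmann} and \cite[Theorem~1]{Winkelmann} to obtain, after shrinking $G^c$, \emph{global} $G$-invariant smooth functions $\varrho_1,\dots,\varrho_n$ on $G^c$ with $\iota_G^{\ast}(d^c\varrho_j)=\beta_j$ and $\varrho_j|_G\equiv 0$. With these in hand, one simply extends each $f_j$ to some $F_j:S^c\to\mathbb R$ (no equivariance needed for the individual $F_j$) and sets
\[
\Theta(h,s)\;=\;\sum_{j=1}^n F_j(s)\,\varrho_j(h),
\]
which is globally defined and $G$-invariant from the start. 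The condition $\varrho_j|_G\equiv 0$ kills the cross-terms $\pi_{G^c}^{\ast}(\varrho_j)\cdot\pi_{S^c}^{\ast}(d^c F_j)$ upon restriction, giving $(\iota_{G\times S})^{\ast}(d^c\Theta)=\sum_j \pi_S^{\ast}(f_j)\,\pi_G^{\ast}(\beta_j)$ directly. No patching on the $G$-side, no $G$-invariant partition of unity, no transport along $G$-orbits. The $K$-invariance is then obtained in one stroke by averaging $\Theta+\pi_{S^c}^{\ast}\theta$ over $K$, and strict plurisubharmonicity is handled by citing \cite[Lemma~3.10]{Stratmann} rather than the $\nu$-subtraction of Proposition~\ref{compact-extension}.

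In short: your plan is not wrong, but the patching you anticipate for the noncompact factor is unnecessary once you know that the $\varrho_j$ exist globally on $G^c$. That existence result (Stratmann/Winkelmann) is the ingredient you are missing; with it, the product formula $\Theta=\sum F_j\varrho_j$ replaces your entire local-to-global argument for the first summand.
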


\smallskip

\begin{proof}[{\bf Proof}]
%\begin{prop}\label{prop-Theta-basis}
Let $\beta_1, \ldots, \beta_n$
be a basis of $G$-invariant 
$1$-forms on $G$ and 
$f_1, \ldots, f_n \in \mathcal{C}^{\infty}(S)$ 
and 
$ \pi_K^{\ast}\eta = \sum_{j=1}^{n} \pi_S^{\ast}(f_j) \cdot \pi_G^{\ast}(\beta_j)  
                      + \pi_S^{\ast}(\sigma_S)$
be as above in Proposition \ref{prop-frame}. 
%Then there is 
%%a set $G^c \subset G^{\mathbb C}$ which is 
%%invariant under the left multiplication of $G$ on $G^c$
%a $G$-equivariant Stein complexification $G^c$ of $G$ 
%and a Stein complexification $S^c$ of $S$
%and a $G$-invariant function
%$\Theta : G^c \times S^c \rightarrow \mathbb R$ 
%%on a complexification $G^c \times S^c$ of $G \times S$
%such that
%$$ (\iota_{G \times S})^{\ast} (d^c \Theta) 
%      % = f_1 (s) \beta_1 + \ldots + f_n (s) \beta_n, 
%   =\sum_{j=1}^{n} \pi_S^{\ast}(f_j) \cdot \pi_G^{\ast}(\beta_j),$$
%where $\iota_{G \times S} : G \times S \hookrightarrow G^c \times S^c$ 
%is the embedding. 
%%and such that $\Theta$ is invariant under the left $G$-action on $G^c$.
%\end{prop}
%
%
%\begin{proof}[{\bf Proof}]
Let $G^c$ be a Stein complexification of $G$
which is $G$-equivariant with respect to the left $G$-multiplication.
Shrinking $G^c$ if necessary, 
%Lemma \ref{lem-4-3-strat}
Lemma 3.3 in \cite{Stratmann} and Theorem 1 in \cite{Winkelmann}
imply that
there are $G$-invariant functions 
$\varrho_1, \ldots, \varrho_n$ 
%on %$G$-tubes 
%certain $G$-invariant Stein neighbourhoods 
%$G^{c}_1, \ldots, G^{c}_n$ of $G$ in %$G^{\mathbb C}$
%$G^c$, which one obtains by shrinking $G^{\mathbb C}$
%on a possibly shrinked $G^c$
on $G^c$
such that $ \iota_G^{\ast} (d^c \varrho_j) = \beta_j$
for $j = 1, \ldots, n$. 
%on $G^c_k$
%for every $k = 1, \ldots, n$.\footnote{Kutzschebauch (\cite{Kutzschebauch-eigentliche-Wirkungen}, \S 2.1, Korollar zu Lemma 3) shows that for a compact subgroup $K$ of an extendable group $G$, there is a neighbourhood basis of $G \times K$-invariant Stein and $H$-orbit-convex neighbourhoods of $G$ in $G^c$. This is why, for later use, the subsets $G_1^c, \ldots, G_n^c$ of $G^c$ can be assumed $G \times K$-invariant, Stein and $H$-orbit-convex.}
It can be assumed that %every  
$\varrho_j \vert_{G} \equiv 0$.
%Replace $G^c$ by $G^c := G^{c}_1 \cap \ldots \cap G^{c}_n$. 
There is a complexification $S^c$ of $S$ with functions 
$F_1, \ldots, F_n :S^c \rightarrow \mathbb R$
%[{\bf Invarianz kommt erst sp\"ater.}]
such that 
$ f_j = (\iota_{S \hookrightarrow S^c})^{\ast} (F_j)$
for $j = 1, \ldots, n$.
%Then define the function
The $G$-invariant function
\[ \begin{array}{rcl}   
    \Theta : G^c \times S^c & \rightarrow & \mathbb R \\ 
        (h, s) & \mapsto & 
          F_1(s) \varrho_1(h) + \ldots + F_n(s) \varrho_n(h)
        \end{array} \] 
%which is invariant under the $G$-action 
%$G \times ( G^c \times S^c ) \rightarrow G^c \times S^c$, 
%$(g, (h, s)) \mapsto (gh, s)$.
%Since 
satisfies
%$ d^c \Theta   % =  F_1(s) d^c \varrho_1(h) + \varrho_1(h) d^c F_1(s) 
                % + \ldots + 
                % F_n(s) d^c \varrho_n(h) + \varrho_n(h) d^c F_n(s) 
%              = \sum_{j=1}^{n} F_j(s) d^c \varrho_j(h) + \varrho_j(h) d^c F_j(s)$.
$ d^c \Theta 
  = \sum_{j=1}^{n} \pi_{S^c}^{\ast}(F_j) \cdot \pi_{G^c}^{\ast}(d^c \varrho_j) 
                 + \pi_{G^c}^{\ast}(\varrho_j) \cdot \pi_{S^c}^{\ast}(d^c F_j)$
with the projections
$\pi_{G^c}: G^c \times S^c \rightarrow G^c$ and
$\pi_{S^c}: G^c \times S^c \rightarrow S^c$.
Now, the property 
$\varrho_j \vert_{G} \equiv 0$ %for $j = 1, \ldots n$, 
implies
%\begin{align*}
%  \iota_{G \times S}^{\ast} (d^c \Theta)
%    & = f_1(s) \beta_1 + 0 \cdot \iota_{G \times S}^{\ast} d^c F_1 
%                + \ldots + 
%        f_n(s) \beta_n + 0 \cdot \iota_{G \times S}^{\ast} d^c F_n \\
%    & = f_1(s) \beta_1 + \ldots + f_n(s) \beta_n.  
%\end{align*}
$$ (\iota_{G \times S})^{\ast} (d^c \Theta)
   = \sum_{j=1}^{n} \pi_S^{\ast}(f_j) \cdot \pi_G^{\ast}(\beta_j).  $$
%
%\begin{proof}[{\bf Proof}]
% Let $\beta_1, \ldots, \beta_n$
% be a basis of $G$-invariant $1$-forms on $G$.
% Then, %by $G$-invariance, 
% by Proposition \ref{prop-frame}, 
%
%By Lemma \ref{lem-4-3-strat} there is 
%a $G$-equivariant complexification $G^c$ of $G$,
%which can be made $G \times K$-invariant
%by integration over $K$.
%By Corollary \ref{compact-extension},
%a $K$-equivariant complexification $S^c$ of $S$
%is provided.
%There are a $K$-invariant $1$-form $\sigma_S$ on $S$ 
%and $K$-invariant functions $f_1, \ldots, f_k \in \mathcal{C}^{\infty}(S)$ 
%such that 
%%$$ \pi_K^{\ast}\eta = f_1(s)\beta_1 + \ldots + f_n(s) \beta_n + \pi_S^{\ast}(\sigma_S),$$
%
%
%Let
%$ \pi_K^{\ast}\eta = \sum_{j=1}^{n} \pi_S^{\ast}(f_j) \cdot \pi_G^{\ast}(\beta_j)  
%                      + \pi_S^{\ast}(\sigma_S)$
%be as above in Proposition \ref{prop-frame}.
%%where $\pi_S : G \times S \rightarrow S$
%%is the projection onto the second factor.
%By Proposition %%\ref{lem-4-3-strat} 
%\ref{prop-Theta-basis}
%there is a $G \times K$-equivariant complexification $G^c \times S^c$ 
%and a $G$-invariant function $\Theta : G^c \times S^c \rightarrow \mathbb R$
%such that 
%$$ (\iota_{G \times S})^{\ast} (d^c \Theta) 
%      % = f_1(s)\beta_1 + \ldots + f_n(s) \beta_n 
%   = \sum_{j=1}^{n} \pi_S^{\ast}(f_j) \cdot \pi_G^{\ast}(\beta_j)  .$$
After shrinking $S^c$
%Proposition \ref{compl-dc-rho-eta}
Proposition \ref{compact-extension} shows that 
there is a strictly plurisubharmonic $K$-invariant function
$ \theta : S^c \rightarrow \mathbb R $
such that 
$(\iota_{S \hookrightarrow S^c})^{\ast} (d^c \theta) 
    = \sigma_S $. 
Define the $G \times K$-invariant function 
$\varrho$ by 
$$ \varrho (g, s) := %\frac{1}{\mathrm{vol}(K)} 
    \int_K ( \Theta (g k^{-1}, k s) + (\pi_{S^c})^{\ast}(\theta) (k s) ) dk.$$
Then $(\iota_{G \times S})^{\ast}(d^c \varrho) = \pi_K^{\ast}\eta$, 
and $\varrho$ is a $G \times K$-invariant function.
%\end{proof}
A partition of unity argument, worked out in Lemma 3.10 in \cite{Stratmann},
which is e.g. shows that $\varrho$ can be assumed both
$G \times K$-invariant and strictly plurisubharmonic.
\end{proof}

%\bigskip

%\subsection{Extensions of invariant forms for proper actions}\label{Extensions of invariant forms for proper actions}

\bigskip

Recall the original goal to extend a $G$-invariant $1$-form $\eta$
to an equivariant complexification of $G \times^K S$.
This will be achieved with the help of K\"ahlerian reduction of $G^c \times S^c$
with respect to the freely acting compact group $K$.
Details on the momentum map geometry and on K\"ahlerian reduction 
can be found e.g. in  
%\cite{Ammon},
%\cite{Guillemin-Sternberg},
\cite{Heinzner-geometric-invariant-stein}, 
%%\cite{HH-invent},
%%\cite{HH-ana-hilbert-quot},
%\cite{HH-Schneider-Band},
\cite{Heinzner-Loose}, 
\cite{Sjamaar}. 
The basic properties needed here
are mentioned briefly in the remaining section.

\medskip

Let $(\Omega, \omega)$ be a K\"ahler manifold 
and let $L$ be a Lie group 
which acts symplectically 
and by holomorphic transformations
on $\Omega$, 
i.e., $(\psi_g)^{\ast} \omega = \omega$ for every $g \in L$,
where $\psi : L \times \Omega \rightarrow \Omega$ is the action map.
The action is called Hamiltonian 
if there is a moment map $\mu : \Omega \rightarrow \mathrm{Lie}(L)^{\ast}$, 
where $\mathrm{Lie}(L)^{\ast}$ is the dual vector space to the Lie algebra of $L$,
with the following properties:
\begin{enumerate}
\item The map $\mu$ is $L$-equivariant 
      with respect to the given action on $\Omega$ and the coadjoint action of 
      $L$ on $\mathrm{Lie}(L)^{\ast}$.
\item For every $\xi \in \mathrm{Lie}(L)$
      the function
      $ \mu_{\xi} : \Omega \rightarrow \mathbb R$, 
      $x \mapsto \langle \mu(x), \xi_{\Omega}(x) \rangle$,
      satisfies %the identity of $1$-forms
      $\iota_{\xi_{\Omega}} \omega = d\mu_{\xi} $,
      where $\xi_{\Omega}(x) = \frac{d}{dt} \orbit{\exp(t \xi)}{x} \big \vert_{t=0}$
      and $\iota_{\xi_{\Omega}} \omega$ 
      is the $1$-form given by 
      $(\iota_{\xi_{\Omega}} \omega)(v) = \omega(\xi_{\Omega}, v)$ 
      for every $v \in TM$.
\end{enumerate}
If $\Omega$ carries a differentiable, 
$L$-invariant strictly plurisubharmonic function
$\varrho : \Omega \rightarrow \mathbb R$,
the action is Hamiltonian
with respect to the K\"ahler metric $\omega = - dd^c \varrho$.
In this case, a moment map is given by
$\mu_{\xi}(x) = (d^c \varrho)(\xi_{\Omega}(x)) .$
For Hamiltonian actions,  
the momentum zero level 
$ \mu^{-1}(0) = \lbrace x \in \Omega \vert 
                      \mu_{\xi}(x) = 0 \mbox{ for all } \xi \in \mathrm{Lie}(L) 
                 \rbrace $
allows one to define the reduced space 
$\modulo{\mu^{-1}(0)}{L}$.

\bigskip

\begin{prop}\label{free-proper-case}
Let $L$ act freely and properly by holomorphic and symplectic transformations
%on an $L$-invariant domain $\Omega$
%with an $L$-invariant K\"ahler form $\omega$
on a K\"ahler manifold $(\Omega, \omega)$.
Assume that the action is Hamiltonian
with moment map $\mu : \Omega \rightarrow \mathrm{Lie}(L)^{\ast}$.
Let $\Omega \hookrightarrow X$ 
be openly, holomorphically and $L$-equivariantly embedded 
in a complex $L^{\mathbb C}$-manifold $X$
on which $L^{\mathbb C}$ acts freely
such that $\modulo{X}{L^{\mathbb C}}$
is a smooth complex manifold and
$\pi_{L^{\mathbb C}} : X \rightarrow \modulo{X}{L^{\mathbb C}}$
a submersion.
Then the map
\[ \begin{array}{rcl}
 \kappa : \modulo{\mu^{-1}(0)}{L} & \rightarrow & \modulo{X}{L^{\mathbb C}} \\
            L x_0 & \mapsto & \pi_{L^{\mathbb C}}(\iota_{\mu^{-1}(0) \hookrightarrow X} (x_0)) 
  \end{array} \]
is a local diffeomorphism
and defines a unique complex structure on $\modulo{\mu^{-1}(0)}{L}$
such that $\kappa$ is a locally biholomorphic map of complex manifolds.
\end{prop}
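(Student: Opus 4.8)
The plan is to run the standard Kempf--Ness type comparison between a symplectic (here K\"ahler) reduction and a complex quotient, as in \cite{Heinzner-Loose} and \cite{Sjamaar}, adapted to the situation at hand in which the ambient complex object is the given $L^{\mathbb C}$-manifold $X$ rather than a complexification of $\Omega$ itself. Write $\mathfrak{l} := \mathrm{Lie}(L)$, let $J$ denote the complex structure and $g$ the (positive definite) K\"ahler metric on $\Omega$, and set $Z := \mu^{-1}(0)$.

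First I would collect the linear-algebraic preliminaries at a point $x_0 \in Z$. Since $L$ acts freely, the infinitesimal orbit map $\mathfrak{l} \to T_{x_0}\Omega$, $\xi \mapsto \xi_{\Omega}(x_0)$, is injective, so $\dim_{\mathbb R}(\mathfrak{l}\cdot x_0) = \dim L$. From the moment map equation $\iota_{\xi_{\Omega}}\omega = d\mu_{\xi}$ one reads off $\ker d\mu_{x_0} = (\mathfrak{l}\cdot x_0)^{\omega}$, the symplectic orthogonal complement; as $\omega$ is nondegenerate this subspace has codimension $\dim L$, hence $d\mu_{x_0}$ is surjective, $0$ is a regular value, and $Z$ is a closed submanifold with $T_{x_0}Z = (\mathfrak{l}\cdot x_0)^{\omega}$. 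Because $\mu$ is equivariant and $0$ is fixed by the coadjoint action, $Z$ is $L$-invariant, and since $L$ acts freely and properly the quotient $\modulo{Z}{L}$ is a smooth manifold with $\pi_L : Z \to \modulo{Z}{L}$ a submersion. In particular $\mathfrak{l}\cdot x_0 \subseteq T_{x_0}Z = (\mathfrak{l}\cdot x_0)^{\omega}$, i.e. the orbit through $x_0$ is $\omega$-isotropic. Finally, since the embedding $\Omega \hookrightarrow X$ is open and $L$-equivariant I identify $T_{x_0}\Omega = T_{x_0}X$ and the $\mathfrak{l}$-orbit directions at $x_0$ in $\Omega$ and in $X$; as $L^{\mathbb C}$ acts holomorphically on $X$ the imaginary directions $(i\xi)_X(x_0)$ equal $J\,\xi_X(x_0)$ up to sign, so $\mathfrak{l}^{\mathbb C}\cdot x_0 = \mathfrak{l}\cdot x_0 + J(\mathfrak{l}\cdot x_0)$, a subspace of real dimension $2\dim L$ since $L^{\mathbb C}$ acts freely.

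Next I would check that $\kappa$ is well defined and smooth and then compute its differential. If $x_0' = g\cdot x_0$ with $g \in L \subseteq L^{\mathbb C}$, then $\iota(x_0') = g\cdot\iota(x_0)$ by $L$-equivariance, hence $\pi_{L^{\mathbb C}}(\iota(x_0')) = \pi_{L^{\mathbb C}}(\iota(x_0))$; thus $\kappa$ is the map induced on $\modulo{Z}{L}$ by the smooth $L$-invariant composition $Z \hookrightarrow X \to \modulo{X}{L^{\mathbb C}}$, and is smooth. Identifying $T(\modulo{Z}{L})$ at $\pi_L(x_0)$ with $\ker d\mu_{x_0}/(\mathfrak{l}\cdot x_0)$ and $T(\modulo{X}{L^{\mathbb C}})$ at $\pi_{L^{\mathbb C}}(x_0)$ with $T_{x_0}X/(\mathfrak{l}^{\mathbb C}\cdot x_0)$, the differential $d\kappa$ is the map induced by the inclusion $\ker d\mu_{x_0} \hookrightarrow T_{x_0}X$. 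For injectivity, suppose $v \in \ker d\mu_{x_0}$ lies in $\mathfrak{l}^{\mathbb C}\cdot x_0$, say $v = \xi_X(x_0) + J\,\zeta_X(x_0)$ with $\xi,\zeta \in \mathfrak{l}$; since $\mathfrak{l}\cdot x_0$ is $\omega$-isotropic, $\xi_X(x_0) \in (\mathfrak{l}\cdot x_0)^{\omega}$, hence $J\,\zeta_X(x_0) \in (\mathfrak{l}\cdot x_0)^{\omega}$ as well, and pairing this with $\zeta_X(x_0)$ and using the compatibility of $\omega$, $J$ and $g$ forces $g(\zeta_X(x_0),\zeta_X(x_0)) = 0$, whence $\zeta_X(x_0) = 0$ and $\zeta = 0$ by freeness; thus $v \in \mathfrak{l}\cdot x_0$ and $d\kappa$ is injective. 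A dimension count then finishes it: $\dim_{\mathbb R}\bigl(\ker d\mu_{x_0}/(\mathfrak{l}\cdot x_0)\bigr) = (\dim_{\mathbb R}\Omega - \dim L) - \dim L = \dim_{\mathbb R}X - 2\dim L = \dim_{\mathbb R}X - \dim_{\mathbb R}L^{\mathbb C} = \dim_{\mathbb R}\bigl(T_{x_0}X/(\mathfrak{l}^{\mathbb C}\cdot x_0)\bigr)$, so the injective linear map $d\kappa$ is an isomorphism at every point, and $\kappa$ is a local diffeomorphism by the inverse function theorem.

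Finally, transporting the complex structure of $\modulo{X}{L^{\mathbb C}}$ through the local diffeomorphism $\kappa$ yields an almost complex structure on $\modulo{Z}{L}$; it is automatically integrable because $\kappa$ is locally a diffeomorphism onto an open subset of a complex manifold, and it is by construction the unique almost complex structure turning $\kappa$ into a locally biholomorphic map, which gives the assertion. I expect the one genuinely delicate point to be the injectivity of $d\kappa$, i.e. the identity $\mathfrak{l}^{\mathbb C}\cdot x_0 \cap \ker d\mu_{x_0} = \mathfrak{l}\cdot x_0$ at points of $\mu^{-1}(0)$, which is exactly where the interaction of the moment map equation $\iota_{\xi_{\Omega}}\omega = d\mu_{\xi}$ with the positivity of the K\"ahler metric is used; a secondary point requiring some care is the bookkeeping identifying the infinitesimal $L$- and $L^{\mathbb C}$-orbit directions inside $T_{x_0}X$ via the open equivariant embedding and the holomorphy of the $L^{\mathbb C}$-action, which is what legitimates $\mathfrak{l}^{\mathbb C}\cdot x_0 = \mathfrak{l}\cdot x_0 + J(\mathfrak{l}\cdot x_0)$.
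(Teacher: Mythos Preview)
Your argument is correct and follows essentially the same Kempf--Ness comparison as the paper: both reduce to showing that the induced map on tangent spaces is an isomorphism via the identity $\ker d\mu_{x_0} = (\mathfrak{l}\cdot x_0)^{\perp_\omega}$ together with the relation between the $L$- and $L^{\mathbb C}$-orbit directions. The only cosmetic difference is that the paper packages the linear algebra as the direct-sum decomposition $T_{x_0}\mu^{-1}(0) = T_{x_0}(L\cdot x_0) \oplus T_{x_0}(L^{\mathbb C}\cdot x_0)^{\perp_\omega}$ and reads off bijectivity of $D\kappa$ from that, whereas you prove the equivalent statement $\mathfrak{l}^{\mathbb C}\cdot x_0 \cap \ker d\mu_{x_0} = \mathfrak{l}\cdot x_0$ directly and finish with a dimension count; your version has the virtue of making explicit where the positivity of the K\"ahler metric enters, which the paper leaves unsaid.
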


%\bigskip

\smallskip

\begin{proof}[{\bf Proof}]
Since 
$\ker d\mu(x) 
   = (T_{x} \orbit{L}{x})^{\bot_{\omega}} 
   = \lbrace v \in T_{x}X \vert \omega(v,w) = 0 \ \forall w \in T_{x} \orbit{L}{x} \rbrace$, 
$x \in X$,
implies that
$\mathrm{rank}(\mu) = \dim (\mathrm{Lie}(L))$ everywhere, 
$\mu^{-1}(0)$ is a smooth submanifold of $\Omega$.
The fact that $L$ acts freely and properly
implies that $\modulo{\mu^{-1}(0)}{L}$
is likewise a differentiable manifold.
Furthermore, for a point $x_0 \in \mu^{-1}(0)$,
$$ T_{x_0} \mu^{-1}(0) = T_{x_0} (\orbit{L}{x_0}) \oplus 
                         T_{x_0} (\orbit{L^{\mathbb C}}{x_0})^{\bot_{\omega}}.$$
In the commutative diagram
\[ \begin{array}{rccl}
    \mu^{-1}(0) \phantom{xx} & \stackrel{\iota_{\mu^{-1}(0)}}{\longhookrightarrow} & X \phantom{xx} \\
    & & \\
    \phantom{xxx} \Big \downarrow  \mbox{ }^{ \pi_{\mu^{-1}(0)} } & & \Big \downarrow \mbox{ }^{ \pi_{L^{\mathbb C}} } \\
    & & \\
    \modulo{\mu^{-1}(0)}{L} \phantom{x} & \stackrel{\kappa}{\longrightarrow} &  
    \modulo{X}{L^{\mathbb C}}, 
   \end{array} \]
the maps 
$\pi_{\mu^{-1}(0)} : \mu^{-1}(0) \rightarrow \modulo{\mu^{-1}(0)}{L}$
and 
$\pi_{L^{\mathbb C}} : X \rightarrow \modulo{X}{L^{\mathbb C}}$
are submersions with kernels
$\ker(D \pi_{\mu^{-1}(0)})(x) = T_x(\orbit{L}{x}) $ and 
$\ker(D \pi_{L^{\mathbb C}})(x) = T_x(\orbit{L^{\mathbb C}}{x})$
respectively.
It follows that
$D(\pi_{L^{\mathbb C}} \circ \iota_{\mu^{-1}(0)})(x_0) = D(\kappa \circ \pi_{\mu^{-1}(0)})(x_0)$
maps $T_{x_0} (\orbit{L^{\mathbb C}}{x_0})^{\bot_{\omega}}$
bijectively onto 
$T_{\pi_{L^{\mathbb C}}(x_0)} \Big( \modulo{X}{L^{\mathbb C}} \Big)$
and $D \pi_{\mu^{-1}(0)}$ maps
bijectively onto 
$T_{\pi_{L^{\mathbb C}}(x_0)} \Big( \modulo{\mu^{-1}(0)}{L} \Big) $.
This implies that $D \kappa (\pi_{\mu^{-1}(0)}(x_0))$
is everywhere an isomorphism.
\end{proof}

\bigskip

Let $\Omega \subset G^c \times S^c$ 
be a $G \times K$-invariant Stein domain,
$G \times S \subset \Omega$,  
and $\varrho: \Omega \rightarrow \mathbb R$
a $G \times K$-invariant strictly plurisubharmonic function
such that 
$(\iota_{G \times S})^{\ast}(d^c \varrho) = \pi_K^{\ast} \eta$.

\bigskip

\begin{prop}\label{Kempf-Ness-free-case}
There are $G$-invariant Stein domains
$\Omega_1 \subset \modulo{\mu^{-1}(0)}{K}$ 
containing $G \times^K S$ and 
$\Omega_2 \subset G^{\mathbb C} \times^{K^{\mathbb C}} S^{\mathbb C}$ 
containing $G \times^K S$ 
which are $G$-equivariantly biholomorphic.
\end{prop}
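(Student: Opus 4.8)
The plan is to apply Proposition~\ref{free-proper-case} to the $K$-action on the Stein domain $\Omega \subset G^c \times S^c$ carrying the strictly plurisubharmonic function $\varrho$ from Proposition~\ref{G-times-K-invariante-function}. First I would record that $K$ acts freely and properly on $\Omega$ (this is the free diagonal action $k\cdot(g,s) = (gk^{-1}, ks)$ inherited from $G\times S$, shrinking $\Omega$ to a $G\times K$-invariant domain on which the $K$-action is still free), so that $\Omega$ is a K\"ahler manifold with $\omega = -dd^c\varrho$ and a Hamiltonian $K$-action with moment map $\mu_\xi = (d^c\varrho)(\xi_\Omega)$. Since $K$ is compact, the zero level $\mu^{-1}(0)$ is nonempty and meets every $K^{\mathbb C}$-orbit near $G\times S$; the Kempf--Ness type theory (references \cite{Heinzner-geometric-invariant-stein}, \cite{Heinzner-Loose}, \cite{Sjamaar}) gives that the inclusion induces a biholomorphism of $\modulo{\mu^{-1}(0)}{K}$ with a neighborhood of $G\times^K S$ in the GIT-type quotient $\ktquot{G^c\times S^c}{K}$, which by Proposition~\ref{KC-mfd} is a $G$-complexification of $M = G\times^K S$.

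Next I would take $X$ to be (a $G\times K^{\mathbb C}$-invariant neighborhood of $G\times S$ in) $G^{\mathbb C}\times S^{\mathbb C}$, on which $K^{\mathbb C}$ acts freely via $k\cdot(g,s) = (gk^{-1},ks)$ with smooth quotient $\modulo{X}{K^{\mathbb C}} = G^{\mathbb C}\times^{K^{\mathbb C}} S^{\mathbb C}$ and $\pi_{K^{\mathbb C}}$ a submersion. The freeness of $K^{\mathbb C}$ on the relevant domain follows because it holds on the right $K^{\mathbb C}$-factor of $G^{\mathbb C}$ (here we use that $G$ is extendable, so $G^c\hookrightarrow G^{\mathbb C}$ and $K^{\mathbb C}\subset G^{\mathbb C}$ acts freely by right multiplication) and this passes to the diagonal action after intersecting with $\Omega\subset G^c\times S^c$. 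Proposition~\ref{free-proper-case} then furnishes the local biholomorphism
\[
 \kappa : \modulo{\mu^{-1}(0)}{K} \longrightarrow \modulo{X}{K^{\mathbb C}}
          = G^{\mathbb C}\times^{K^{\mathbb C}} S^{\mathbb C},
\]
which is holomorphic for the complex structure it induces on the left-hand side.

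It remains to upgrade $\kappa$ from a local biholomorphism to an actual biholomorphism onto its image after passing to suitable $G$-invariant Stein subdomains, and to check $G$-equivariance. For $G$-equivariance: the left $G$-multiplication on the $G^c$- (resp.\ $G^{\mathbb C}$-) factor commutes with the right $K$- (resp.\ $K^{\mathbb C}$-) action and preserves $\varrho$ and hence $\mu$, so $G$ descends to both reduced spaces and $\kappa$ intertwines the two $G$-actions by construction; both descend the inclusion $M = G\times^K S$, and on that totally real submanifold $\kappa$ restricts to the identity. For injectivity: $\kappa$ is $G\times K^{\mathbb C}$-equivariant in the appropriate sense and is injective along $M$; using the slice structure $M = G\times^K S$ and the corresponding product slice structure near $M$ in both complexifications, injectivity on a $G$-invariant tube around $M$ reduces to injectivity in the $S^c$-direction, where it follows from the Kempf--Ness correspondence for the compact group $K$ acting on a neighborhood of $S$ in $S^c\subset S^{\mathbb C}$ (again references \cite{Heinzner-geometric-invariant-stein}, \cite{Sjamaar}). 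Shrinking to $G$-invariant Stein domains $\Omega_1$ and $\Omega_2$ is possible because Stein domains admit $G$-invariant Stein exhaustions and the image of any relatively compact $G$-invariant Stein neighborhood of $M$ is again such; one chooses $\Omega_1$ small enough that $\kappa|_{\Omega_1}$ is injective, then sets $\Omega_2 := \kappa(\Omega_1)$. The main obstacle I expect is precisely this passage from local to global injectivity: the local-diffeomorphism statement of Proposition~\ref{free-proper-case} says nothing about global behavior, so one genuinely has to exploit the slice decomposition of $M$ together with the uniqueness part of the Kempf--Ness theorem to pin down a $G$-invariant neighborhood on which $\kappa$ is one-to-one, rather than merely a covering.
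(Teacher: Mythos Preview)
Your overall strategy matches the paper's: apply Proposition~\ref{free-proper-case} to the free $K$-action on $\Omega$ to obtain the locally biholomorphic $\kappa$, observe $G$-equivariance, and then pass to $G$-invariant Stein neighborhoods of $M$. Two points deserve comment.

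First, you never verify that $G\times S \subset \mu^{-1}(0)$, although you implicitly rely on this when you assert that $\kappa$ restricts to the identity on $M = G\times^K S$. This is not automatic from general Kempf--Ness theory; it uses the specific relation $(\iota_{G\times S})^{\ast}(d^c\varrho)=\pi_K^{\ast}\eta$ from Proposition~\ref{G-times-K-invariante-function}. The paper checks it directly: for $\zeta\in\mathrm{Lie}(K)$ and $(g,s)\in G\times S$ one has
\[
\mu_\zeta(g,s)
 = (\iota_{G\times S})^{\ast}(d^c\varrho)\Big(\tfrac{d}{dt}\big|_{t=0}\big(g\exp(-t\zeta),\,\exp(t\zeta)\cdot s\big)\Big)
 = (\pi_K^{\ast}\eta)(\dots)
 = \eta\Big(\tfrac{d}{dt}\big|_{t=0}\pi_K(g,s)\Big)=0,
\]
since the curve $t\mapsto (g\exp(-t\zeta),\exp(t\zeta)\cdot s)$ is a $K$-orbit and hence projects to a constant under $\pi_K$.

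Second, your passage from local to global injectivity of $\kappa$ is more elaborate than needed. Once $G\times S\subset\mu^{-1}(0)$ is established, $\kappa|_{G\times^K S}$ is a $G$-equivariant real-analytic isomorphism between the two copies of $G\times^K S$, each sitting as a totally real submanifold of maximal real dimension in its respective complex manifold. The paper then simply invokes the germ-uniqueness of $G$-equivariant complexifications (\cite{HHK}, Corollary~7): any two such complexifications that agree along $M$ are $G$-equivariantly biholomorphic on $G$-invariant Stein neighborhoods of $M$. Your slice-by-slice Kempf--Ness injectivity argument in the $S^c$-direction would presumably also work, but it is not required and introduces extra hypotheses to check.
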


%\medskip

\begin{proof}[{\bf Proof}]
First, it has to be shown that
$G \times S \subset \mu^{-1}(0)$.
For this, %it remains to prove that 
%a short 
the following
calculation proves that
for every $\zeta \in \mathrm{Lie}(K)$ 
and every $(g, s) \in G \times S$,
%$$ \frac{d}{dt} \varrho(\orbit{\exp(it \zeta)}{(g, s)}) \big \vert_{t=0} = 0.$$
%This follows because
\begin{align*}
 \frac{d}{dt} \varrho(\orbit{\exp(it \zeta)}{(g, s)}) \big \vert_{t=0}  
& = (\iota_{G \times S})^{\ast} (d^c \varrho) 
    \Big( \frac{d}{dt} (g \exp(-t \zeta), \orbit{\exp(t \zeta)}{s} ) \big \vert_{t=0} \Big) \\
& = (\pi_K^{\ast} \eta) 
    \Big( \frac{d}{dt} (g \exp(-t \zeta), \orbit{\exp(t \zeta)}{s} ) \big \vert_{t=0} \Big) \\
& = \eta \Big( \frac{d}{dt} \pi_K(g, s) \big \vert_{t=0} \Big) \\ 
& = 0.
\end{align*}
Since $K$ acts freely and commutes with the $G$-action,
$\modulo{\mu^{-1}(0)}{K}$is a $G$-manifold and,
by Proposition \ref{free-proper-case},
obtains a complex structure by the map
$$ \kappa : \modulo{\mu^{-1}(0)}{K} \rightarrow 
          G^{\mathbb C} \times^{K^{\mathbb C}} S^{\mathbb C}, $$
which is a local diffeomorphism. 
Since the restriction $\kappa \vert_{G \times^K S}$
defines a real analytic, $G$-equivariant isomorphism between
two copies of 
$G \times^K S$ in $\modulo{\mu^{-1}(0)}{K}$
and in 
$ G^{\mathbb C} \times^{K^{\mathbb C}} S^{\mathbb C} $ respectively,
there are $G$-invariant and biholomorphic Stein neighbourhoods 
$\Omega_1$ of $G \times^K S$ in $\modulo{\mu^{-1}(0)}{K}$
and
$\Omega_2$ of $G \times^K S$ in 
$G^{\mathbb C} \times^{K^{\mathbb C}} S^{\mathbb C}$
(\cite{HHK}, Corollary 7). %(p. 266).
\end{proof}

\bigskip

It remains to show how to use the extension of $\pi_K^{\ast} \eta$ on 
$G^c \times S^c$ for an extension of $\eta$ on $G \times^K S$. 
%to the quotient $\ktquot{G^c \times S^c}{K}$.
This is carried out in the proof of the %following 
Theorem \ref{Komplexifizierungssatz}, 
which can now be carried out.

\bigskip

%\begin{theorem}\label{Komplexifizierungssatz}
%Let $G$ be an extendable Lie group
%with finitely many connected components
%that acts properly on a manifold $M$
%and let $K$ be a maximal compact subgroup of $G$.
%Let $\eta$ be a smooth $G$-invariant $1$-form on $M$.
%The slice $S \subset M$ is embedded in a Stein 
%$K^{\mathbb C}$-manifold $S^{\mathbb C}$
%such that $M = G \times^K S$ is complexified by a 
%$G \times K$-invariant Stein domain 
%$M^c \subset G^{\mathbb C} \times^{K^{\mathbb C}} S^{\mathbb C}$.
%Then there is a $G$-invariant strictly plurisubharmonic function 
%$\varrho: M^c \rightarrow \mathbb R $
%such that 
%$$ \iota_M^{\ast} d^c \varrho = \eta.$$
%\end{theorem}

%\medskip

\begin{proof}[{\bf Proof of Theorem \ref{Komplexifizierungssatz}}]
By Proposition \ref{G-times-K-invariante-function}
%and Lemma \ref{Equivariant-Kahler-Lemma}
%and by Lemma 3.10 in \cite{Stratmann},
there is a Stein $G \times K$-complexifica\-tion 
$\Omega \subset G^{\mathbb C} \times S^{\mathbb C}$ of $G \times S$
and a strictly plurisubharmonic $G \times K$-invariant function 
$\varrho : \Omega \rightarrow \mathbb R$
such that
$ \pi^{\ast}_K \eta = (\iota_{G \times S})^{\ast} (d^c \varrho).$
The moment map
$ \mu : \Omega \rightarrow \mathrm{Lie}(K)^{\ast},
            x \mapsto ( \xi \mapsto d^c \varrho (\xi_{\Omega} (x))),$
is defined for the $K$-action on $\Omega$. 
%defines the K\"ahlerian reduction by the homeomorphism
%$$ \modulo{\mu^{-1}_K (0)}{K} \cong \ktquot{\Omega}{K}$$
For the existence of the following quotients, %and the homeomorphism property, 
note that the relevant groups $K$ and $K^{\mathbb C}$ respectively act freely.
Thanks to Proposition \ref{Kempf-Ness-free-case}
the diagram
\[ \begin{array}{rcccl} 
     \mu^{-1}(0) & %\stackrel{\iota_{\mu^{-1}(0) \hookrightarrow \Omega}}{\longhookrightarrow} 
                   \stackrel{\iota_{\mu}}{\longhookrightarrow}  & 
     \Omega \subset G^{\mathbb C} \times S^{\mathbb C} \\
     & & \\
     \mbox{ }^{ \pi_{\mu^{-1}(0)} } \Big \downarrow & & 
      \Big \downarrow \mbox{ }^{\pi_{K^{\mathbb C}} } \\
     & & \\
     \modulo{\mu^{-1}(0)}{K} & \stackrel{\kappa}{\longrightarrow} & 
     G^{\mathbb C} \times^{K^{\mathbb C}} S^{\mathbb C}
   \end{array} \]
commutes. 
It shows that there is a canonically defined complex structure on 
$ M^c := \modulo{\mu^{-1}(0)}{K}$.  % $ \cong \ktquot{\Omega}{K},$
Note that the $G$-action on $\Omega$
induces a natural $G$-action by holomorphic transformations on the quotient
$M^c = \modulo{\mu^{-1}(0)}{K}$, 
because the $G$-action and the $K$-action on $\Omega$ commute. 
The function
%$$ \varrho_{\mathrm{red}} :  
%       \modulo{\mu^{-1}(0)}{K} \cong \ktquot{G^c \times S^c}{K} 
%   \rightarrow \mathbb R$$
$ \varrho_{\mathrm{red}} :  
   M^c = \modulo{\mu^{-1}(0)}{K} % \cong \ktquot{\Omega}{K} 
          \rightarrow \mathbb R $
which is induced by 
%$ (\iota_{\mu^{-1}(0) \hookrightarrow \Omega})^{\ast} \varrho$
$ (\iota_{\mu})^{\ast} \varrho$
is $G$-invariant
%$$\pi_{\modulo{\mu_K^{-1}(0)}{K}} : 
%               \mu_K^{-1}(0) \rightarrow \modulo{\mu_K^{-1}(0)}{K} $$ 
and has the property
%$$ (\iota_{\mu^{-1}(0) \hookrightarrow \Omega})^{\ast} \varrho 
%      = (\pi_{\mu^{-1}(0)})^{\ast} (\varrho_{\mathrm{red}}).$$ 
$$ (\iota_{\mu})^{\ast} \varrho 
      = (\pi_{\mu^{-1}(0)})^{\ast} (\varrho_{\mathrm{red}}).$$ 
Then the $G \times K$-equivariant embedding 
$ %\iota_{G \times S \hookrightarrow \Omega} 
    \iota_{G \times S} : G \times S \hookrightarrow \Omega 
                        \subset G^{\mathbb C} \times S^{\mathbb C}$
induces a $G$-equivariant embedding
$ \iota_M : M = G \times^K S \hookrightarrow 
             % \ktquot{\Omega}{K} = \ktquot{G^c \times S^c}{K}
            G^{\mathbb C} \times^{K^{\mathbb C}} S^{\mathbb C}.$
The strictly plurisubharmonic function 
$\varrho_{\mathrm{red}}$ on
$\modulo{\mu^{-1} (0)}{K}$  % $ \cong \ktquot{\Omega}{K} $
%(again called $\varrho$ here) 
has the property that
$$ (\iota_M)^{\ast} (d^c \varrho_{\mathrm{red}}) = \eta.$$ 
To see this, consider the following commutative diagram:
\[ \begin{array}{rcccccl}
      G \times S \phantom{x} 
      & %\stackrel{\iota_{G \times S \hookrightarrow \mu^{-1}(0)}}{\longhookrightarrow} 
        \stackrel{\iota_{G \times S}}{\longhookrightarrow} 
      & \mu^{-1}(0) 
      & %\stackrel{\iota_{\mu^{-1}(0) \hookrightarrow \Omega}}{\longhookrightarrow} 
        \stackrel{\iota_{\mu}}{\longhookrightarrow} 
      & \Omega \subset G^{\mathbb C} \times S^{\mathbb C} \\
      & & & & \\
      \Big \downarrow \mbox{ }^{ \pi_K } & & \phantom{xxx} \Big \downarrow \mbox{ }^{ \pi_{\mu^{-1}(0)} }
                            & & \Big \downarrow \mbox{ }^{ \pi_{K^{\mathbb C}} } \\
      & & & & \\
      M = G \times^K S & \stackrel{\iota_M}{\longhookrightarrow} & M^c = \modulo{\mu^{-1}(0)}{K} 
                 & \stackrel{\kappa}{\longrightarrow} & 
      M^{\mathbb C} = G^{\mathbb C} \times^{K^{\mathbb C}} S^{\mathbb C} 
   \end{array}  \]
Since
%\begin{align*}
%\pi_K^{\ast} \eta
% & = (\iota_{G \times S \hookrightarrow \Omega})^{\ast} (d^c \varrho) \\
% & = (\iota_{G \times S \hookrightarrow \mu^{-1}(0)})^{\ast} 
%     ((\iota_{\mu^{-1}(0) \hookrightarrow \Omega})^{\ast} (d^c \varrho)) \\
% & = (\iota_{G \times S \hookrightarrow \mu^{-1}(0)})^{\ast} 
%     ((\pi_{(\mu)^{-1}(0)})^{\ast} (d^c \varrho_{\mathrm{red}})) \\
% & =\pi_K^{\ast} (\iota_M^{\ast}(d^c \varrho_{\mathrm{red}}))
%\end{align*}
$ (\pi_K)^{\ast} \eta = (\iota_{G \times S})^{\ast} (d^c \varrho)
                    = (\pi_K)^{\ast} ((\iota_M)^{\ast}(d^c \varrho_{\mathrm{red}})) $
surjectivity of $\pi_K$ implies that 
$\eta = (\iota_M)^{\ast}(d^c \varrho_{\mathrm{red}})$.
\end{proof}

\bigskip

\subsection{Complexifications of contact and symplectic manifolds}\label{Complexifications of contact and symplectic manifolds}

\bigskip

In the case where $M$ is a contact manifold
Theorem \ref{Komplexifizierungssatz}
can be reformulated in the sense 
that the $1$-form $\eta$ can be extended 
to a $1$-form $\eta^c$, e.g. $\eta^c := d^c \varrho$, 
on a Stein $G$-complexification $M^c$:

\bigskip

{\it Every contact manifold $(M, \eta)$
with a proper $G$-action 
of a Lie group $G$ with finitely many connected components
can be complexified
equivariantly to a Stein $G$-complexi\-fication $M^c$
with a $G$-invariant $1$-form $\eta^c$ such that
$\iota_M^{\ast}(\eta^c) = \eta$
for the embedding $\iota_M : M \hookrightarrow M^c$.}

\bigskip

%An analogous 
A similar
result for symplectic manifolds is proved by 
Stratmann (\cite{Stratmann}). %(\cite{Stratmann}, Theorem 4.12, p. 26).

%{\it Every symplectic manifold $(M, \omega)$
%with a proper action 
%of a Lie group $G$ with finitely many components
%and an  invariant form $\omega$
%can be $G$-equivariantly complexified to 
%a Stein $G$-manifold $M^c$ such that
%$$ \iota_M^{\ast}(\omega^c) = \omega$$
%for a suitable $G$-invariant K\"ahlerian form $\omega^c$ on $M^c$.}

\bigskip

A contact manifold $(M, \eta)$ can be symplectified, 
i.e., it can be extended 
naturally to a symplectic manifold:
If $(M, \eta)$ is a $(2n+1)$-dimensional contact manifold, 
the two-form 
$$ d(e^t \eta + dt) = e^t dt \wedge \eta + e^t d\eta $$
on $M \times \mathbb R$ is symplectic. 
%because
%$$ (e^t dt \wedge \eta + e^t d\eta)^{n+1} 
%  = e^{t \cdot (n+1)} dt \wedge \eta \wedge (d\eta)^n \not= 0. $$
Here, $t$ denotes the standard coordinate 
on the $\mathbb R$-factor of $M \times \mathbb R$.
A contact-form $\eta$ on $M$ induces a symplectic form 
$\omega = d(e^t \cdot (\pi_{M \times \mathbb R \rightarrow M})^{\ast}(\eta))$,
where $t$ is the coordinate on $\mathbb R$ and
$\pi_{M \times \mathbb R \rightarrow M}$
%$\pi_{M \times \mathbb R \rightarrow M}(m, t) = m$
projects on the first factor.
The complex extension of $M$ to $M^c$
induces a complex extension of $M \times \mathbb R$ to $M^c \times \mathbb C$.
%
%
%This means that 
%$(\iota_{M^{c}})^{\ast} (d^c \varrho)$
%extends $\eta$ on $M^{c}$
%and $dd^c \varrho$ %is a complexification of the symplectization
%%$(M \times \mathbb R, d(e^t \eta + dt) )$ on $\Omega$.
%extends $d(e^t \eta + dt)$ on $\Omega$.
%
%
This means that $\eta$ extends to
$(\iota_{M^{c}})^{\ast} (d^c \varrho)$ on $M^{c}$
and $d(e^t \eta + dt)$ extends to $dd^c \varrho$ on $\Omega$.

%Every contact manifold $(M, \eta)$
%has the property that
%its canonical symplectification 
%$(M \times \mathbb R, d(e^t \eta + dt) )$
%can be complexified to a Stein space $\Omega$
%such that for a certain strictly plurisubharmonic function
%$ \varrho : \Omega \rightarrow \mathbb R $
%one has the property that
%$ \iota_{M \times \mathbb R}^{\ast} (dd^c \varrho) 
%   = d(e^t \eta + dt) $.

\bigskip

%The following Proposition explains 
%why complex extensions and symplectifications are compatible.
The symplectification is compatible with the extension to 
complexifications in the following sense.

\bigskip

\begin{prop}\label{symp-d^c-rho}
Let $(M,\eta)$ be a smooth contact manifold. 
Then there is a Stein complexification $M^{c}$ of $M$
and an open neighbourhood $\Omega$ of $M \times \mathbb{R}$
in $M^{c} \times \mathbb{C}$ such that 
there exists a strictly plurisubharmonic function
$\varrho : \Omega \rightarrow \mathbb R$ for which 
$$ (\iota_{M \times \mathbb R \hookrightarrow M^c \times \mathbb C})^{\ast} (dd^c \varrho)
     = d( e^t \eta + dt ) $$ 
for the embeddings
$ \iota_{M^{c}} : M^{c} \hookrightarrow M^{c} \times \mathbb C, z \mapsto (z, 0),$
and $\iota_M : M \hookrightarrow M^{c}$
$$ \iota_{M}^{\ast}( (\iota_{M^{c}})^{\ast} (d^c \varrho) ) = \eta.$$
%\end{theorem}
\end{prop}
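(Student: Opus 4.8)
The strategy is to reduce Proposition~\ref{symp-d^c-rho} to the already-established complexification results for $1$-forms, applied not to $\eta$ itself but to the $1$-form $\lambda := e^t\,(\pi_{M\times\mathbb R\to M})^\ast(\eta) + dt$ on the manifold $M\times\mathbb R$. This $\lambda$ is a perfectly ordinary smooth $1$-form on the smooth manifold $M\times\mathbb R$, and $d\lambda = d(e^t\eta + dt)$ is exactly the symplectic form whose pullback we want to realize. So the first step is to invoke Proposition~\ref{compact-extension} in its symmetry-free form (i.e.\ the local patching argument in its proof, run with the trivial group), which produces a Stein complexification of $M\times\mathbb R$ together with a strictly plurisubharmonic function $\varrho$ whose $d^c$ pulls back to $\lambda$. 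The second step is to arrange that this Stein complexification of $M\times\mathbb R$ has the product form $\Omega\subset M^c\times\mathbb C$ with $\mathbb C$ complexifying the $\mathbb R$-factor: one complexifies $M$ to a Stein manifold $M^c$ (Whitney--Bruhat plus Grauert, as in the proof of Proposition~\ref{compact-extension}), takes $M^c\times\mathbb C$ as a complexification of $M\times\mathbb R$, and then shrinks to a Stein open neighbourhood $\Omega$ of $M\times\mathbb R$; since any two complexifications agree near the totally real submanifold, the function $\varrho$ from the first step transports to this $\Omega$.

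The third step is to extract the two claimed identities. Pulling back $dd^c\varrho$ along $\iota_{M\times\mathbb R\hookrightarrow M^c\times\mathbb C}$ and using $\iota^\ast(d^c\varrho)=\lambda$ together with the fact that pullback commutes with $d$ gives $\iota^\ast(dd^c\varrho) = d\lambda = d(e^t\eta+dt)$, which is the first displayed equation. For the second, restrict everything to the slice $t=0$: along $\iota_{M^c}:M^c\hookrightarrow M^c\times\mathbb C$, $z\mapsto(z,0)$, the coordinate function $t$ pulls back to $0$ and $dt$ pulls back to $0$, while $e^t$ pulls back to $1$; composing with $\iota_M:M\hookrightarrow M^c$ and using $\iota^\ast(d^c\varrho)=\lambda = e^t\pi^\ast\eta+dt$ along $M\times\mathbb R$ then yields $\iota_M^\ast((\iota_{M^c})^\ast(d^c\varrho)) = \eta$, as required. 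One small bookkeeping point: one must check that $\iota_{M^c}$ really lands inside $\Omega$ after the shrinking, which is fine because $M\times\{0\}\subset M\times\mathbb R\subset\Omega$ and one may shrink $\Omega$ only in directions transverse to $M\times\mathbb R$, or simply note $M^c\times\{0\}$ can be assumed contained in $\Omega$.

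The main obstacle I anticipate is not any single hard estimate but rather the compatibility of the two restriction maps — making sure that the function $\varrho$ produced abstractly on a Stein neighbourhood of $M\times\mathbb R$ can be taken on a neighbourhood of \emph{product form} $\Omega\subset M^c\times\mathbb C$, so that the composite embedding $\iota_M^\ast\circ\iota_{M^c}^\ast$ makes sense and the coordinate $t$ on the $\mathbb R$-factor really extends to the coordinate on the $\mathbb C$-factor. This is where one needs the uniqueness of complexifications near a totally real submanifold (the same Grauert/Whitney--Bruhat circle of ideas already used repeatedly above): it lets one replace an arbitrary Stein complexification of $M\times\mathbb R$ by $\Omega\subset M^c\times\mathbb C$ and transport $\varrho$ accordingly. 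Given that, everything else is the routine pullback computation sketched above, exploiting only that $d$ commutes with pullback and that $dt|_{t=0}=0$, $e^t|_{t=0}=1$.
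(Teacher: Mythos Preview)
Your argument is correct but takes a genuinely different route from the paper. The paper does \emph{not} apply Proposition~\ref{compact-extension} to the $1$-form $\lambda = e^t\pi^\ast\eta + dt$ on $M\times\mathbb{R}$. Instead it first obtains a strictly plurisubharmonic $\varrho_M:M^c\to\mathbb{R}$ with $\iota_M^\ast(d^c\varrho_M)=\eta$, then writes down the explicit product function $\varrho(m,z)=e^{\mathrm{Re}(z)}\varrho_M(m)$ on $M^c\times\mathbb{C}$ and checks by direct computation that $(\iota_{M\times\mathbb{R}})^\ast(d^c\varrho)=e^t\eta$, which after one more $d$ gives the first identity (note $d(dt)=0$, so the $dt$ term is immaterial there). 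Because this product is not strictly plurisubharmonic, the paper then adds an auxiliary function $\tilde\nu(m,z)=\nu(m)+|z|^2$ with $\iota_M^\ast(d^c\nu)=\iota_M^\ast(d\nu)=0$ and invokes the correction step of Proposition~\ref{compact-extension} to restore strict plurisubharmonicity on a Stein neighbourhood $\Omega$. Your approach bypasses both the explicit formula and the correction step by treating $\lambda$ as an ordinary $1$-form on $M\times\mathbb{R}$ and letting Proposition~\ref{compact-extension} do all the work; the price is the extra (but easy) step of identifying the abstract complexification of $M\times\mathbb{R}$ with a neighbourhood in $M^c\times\mathbb{C}$ via uniqueness of germs of complexifications, and of possibly shrinking $M^c$ so that $M^c\times\{0\}\subset\Omega$. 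Either method is fine; the paper's has the advantage of giving an explicit formula for $\varrho$ in terms of $\varrho_M$, which is convenient for the equivariant Corollary~\ref{cor-symp-d^c-rho} that follows, while yours is conceptually cleaner and makes clear that nothing beyond Proposition~\ref{compact-extension} is needed.
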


%\bigskip

\begin{proof}[{\bf Proof}]
There is a complexification $M^{c}$ of $M$ 
and a strictly plurisubharmonic function
$\varrho_M : M^{c} \rightarrow \mathbb R$
such that 
$\iota^{\ast}_M (d^c \varrho_M) = \eta $.
Then the function
\[ \begin{array}{rcl}   
   \varrho : M^{c} \times \mathbb C & \rightarrow & \mathbb R \\ 
             (m, z) & \mapsto & e^{\mathrm{Re}(z)} \cdot \varrho_M (m) 
        \end{array} \] 
has the property
$(d^c \varrho) = e^t \cdot d^c \varrho_M - \varrho_M \cdot e^t ds $
where $z=t+is$.
In particular, 
$ \iota_M^{\ast} ((\iota_{M^{c}})^{\ast}(d^c \varrho)) 
    = \iota_M^{\ast} (d^c \varrho_M) 
    = \eta $
and 
$$ (\iota_{M \times \mathbb R})^{\ast} (d^c \varrho) 
    = (\iota_{M \times \mathbb R})^{\ast} (e^t \cdot d^c \varrho_M - \varrho_M \cdot e^t ds)
    = e^t (\iota_{M \times \mathbb R})^{\ast} (d^c \varrho_M) 
    = e^t \eta .$$
If $\nu : M^{c} \rightarrow \mathbb R$
is a strictly plurisubharmonic %exhaustion 
function 
with the property 
$\iota^{\ast}_M (d^c \nu) = 0$
and 
$\iota^{\ast}_M (d \nu) = 0$, 
%Lemma \ref{Kahler-Lemma}    
%(Lemma 2 in \cite{HHL})
%Proposition \ref{compl-dc-rho-eta}
Proposition \ref{compact-extension}
can be applied to 
\[ \begin{array}{rcl}   
   \tilde \nu : M^{c} \times \mathbb C & \rightarrow & \mathbb R \\ 
                (m, z) & \mapsto & \nu (z) + |z|^2 
        \end{array} \] 
and to $\varrho$ 
to obtain a strictly plurisubharmonic function
$\varrho: \Omega \rightarrow \mathbb R$
on a Stein neighbourhood $\Omega$ of 
$M \times \mathbb R$ in $M^{c} \times \mathbb C$.
\end{proof}

\bigskip

Proposition \ref{symp-d^c-rho} also has an equivariant version:

\bigskip

\begin{cor}\label{cor-symp-d^c-rho}
If $G \times M \rightarrow M$ is a proper $G$-action, 
there is a proper extension to $M^c$: 
The trivial extension to an action on
$ M^c  \times \mathbb C$ defines equivariant embeddings
$$ \iota_{M^c} : M^c \hookrightarrow M^c \times \mathbb C \mbox{ and }
   \iota_M : M \hookrightarrow M^c$$
such that $\varrho : \Omega \rightarrow \mathbb R$
can be chosen to be strictly plurisubharmonic and $G$-invariant on
$\Omega \subset M^c \times \mathbb C$.
\end{cor}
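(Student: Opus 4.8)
The corollary is the equivariant refinement of Proposition \ref{symp-d^c-rho}, so the plan is to re-run that proof while carrying a $G$-action through every step, using the equivariant tools already established in the compact and proper cases. First I would invoke Theorem \ref{Komplexifizierungssatz} (in its contact reformulation, stated in Section \ref{Complexifications of contact and symplectic manifolds}) to obtain a Stein $G$-complexification $M^c$ of $M$ together with a $G$-invariant strictly plurisubharmonic function $\varrho_M : M^c \to \mathbb R$ satisfying $\iota_M^{\ast}(d^c\varrho_M) = \eta$. The $G$-action on $M^c$ is proper and by holomorphic transformations, and $\iota_M$ is $G$-equivariant.

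\textbf{Extension to the product.} Next I would equip $M^c \times \mathbb C$ with the trivial extension of the $G$-action, acting only on the first factor; this is again proper and holomorphic, and both $\iota_{M^c} : M^c \hookrightarrow M^c \times \mathbb C$, $z \mapsto (z,0)$, and the $\mathbb R$-direction inclusion $\iota_{M\times\mathbb R}$ become $G$-equivariant. Then, exactly as in the proof of Proposition \ref{symp-d^c-rho}, set $\varrho(m,z) := e^{\mathrm{Re}(z)}\cdot\varrho_M(m)$; since $\varrho_M$ is $G$-invariant and $G$ acts trivially on the $\mathbb C$-factor, $\varrho$ is $G$-invariant. The identities $(\iota_{M\times\mathbb R})^{\ast}(d^c\varrho) = e^t\eta$, $d^c\varrho = e^t\, d^c\varrho_M - \varrho_M\, e^t\, ds$ and $\iota_M^{\ast}((\iota_{M^c})^{\ast}(d^c\varrho)) = \eta$ hold verbatim because pullback commutes with $d^c$ and the maps involved are the same; equivariance does not interfere with any of these computations.

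\textbf{Achieving strict plurisubharmonicity equivariantly.} The function $\varrho$ above need not be strictly plurisubharmonic off $M\times\mathbb R$, so a correction term must be added, and this is the step where equivariance requires genuine care. In the non-equivariant proof one applies the construction of (\cite{HHL}, Lemma 2) to produce $\nu : M^c \to \mathbb R$ with $\iota_M^{\ast}(d^c\nu) = 0$ and $\iota_M^{\ast}(d\nu) = 0$, forms $\tilde\nu(m,z) = \nu(z) + |z|^2$, and combines it with $\varrho$. Here I would instead invoke Proposition \ref{compact-extension} / Theorem \ref{Komplexifizierungssatz} to obtain such a $\nu$ in $G$-invariant form — or simply average a given $\nu$ over a maximal compact $K$ of $G$ (the integral $\int_K \nu\circ\psi_{k^{-1}}\,dk$, as done in the proof of Proposition \ref{compact-extension}) to make it $K$-invariant, noting that the $|z|^2$ summand is already $G$-invariant since $G$ acts trivially on $\mathbb C$. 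Then $\varrho + \varepsilon\,\tilde\nu$ (for small $\varepsilon$, possibly after shrinking to a $G$-invariant Stein neighbourhood $\Omega$ of $M\times\mathbb R$ in $M^c\times\mathbb C$) is $G$-invariant, strictly plurisubharmonic near $M\times\mathbb R$, and still satisfies the required pullback identities because the correction pulls back to zero on $M\times\mathbb R$. Shrinking to a $G$-invariant Stein $\Omega$ is possible since $G$ acts properly and the $G$-averaged strictly plurisubharmonic exhaustion localizes near the closed $G$-invariant set $M\times\mathbb R$.

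\textbf{Main obstacle.} The one point that is not purely formal is ensuring that the plurisubharmonic correction term, the shrinking to $\Omega$, and the passage to a $G$-invariant Stein domain can all be done simultaneously and equivariantly; the resolution is that every ingredient (the function $\nu$ from \cite{HHL}, the Stein neighbourhood from \cite{Grauert}, the averaging over $K$) is available in $G$-invariant form because $G$ acts properly with compact-isotropy and $K$-averaging converts any local construction into an invariant one, precisely the mechanism already exploited throughout Section \ref{Extension of forms}.
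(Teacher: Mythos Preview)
Your proposal is correct and follows essentially the same approach as the paper: the paper's proof is a single sentence observing that in the proof of Proposition \ref{symp-d^c-rho} the function $\varrho_M : M^c \to \mathbb R$ can be chosen $G$-invariant and strictly plurisubharmonic by Theorem \ref{Komplexifizierungssatz}, after which everything else goes through unchanged. Your write-up is more explicit about the correction term $\nu$ and the equivariant shrinking, which the paper leaves implicit, but the underlying argument is the same.
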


%\bigskip

\begin{proof}[{\bf Proof}]
It has just to be observed that 
in the proof of Proposition \ref{symp-d^c-rho} 
the function
$\varrho_M : M^c \rightarrow \mathbb R$
can be chosen to be $G$-invariant 
by Theorem \ref{Komplexifizierungssatz}
and as a strictly plurisubharmonic function.
\end{proof}

\bigskip

\begin{cor}\label{symp-Kahler-form}
Let $M$ be a real analytic manifold with a contact form $\eta$.
Then there is a
Stein complexification $M^{c}$ of $M$
and an open neighbourhood $\Omega$ of $M \times \mathbb{R}$
in $M^{c} \times \mathbb{C}$ such that
the symplectic form $\omega:= d(e^{t} \eta + dt)$
is the pull-back $(\iota_{M \times \mathbb{R}})^{\ast}(\beta)$
of a K\"ahler form $\beta$ on $\Omega$.
%\end{theorem}
\end{cor}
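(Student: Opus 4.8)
The plan is to obtain this corollary essentially for free from Proposition \ref{symp-d^c-rho}. Applying that proposition to the contact manifold $(M,\eta)$ produces a Stein complexification $M^{c}$ of $M$, an open neighbourhood $\Omega$ of $M\times\mathbb{R}$ in $M^{c}\times\mathbb{C}$, and a strictly plurisubharmonic function $\varrho:\Omega\rightarrow\mathbb{R}$ satisfying $(\iota_{M\times\mathbb{R}})^{\ast}(dd^{c}\varrho)=d(e^{t}\eta+dt)=\omega$. The candidate K\"ahler form is then simply $\beta:=dd^{c}\varrho$.

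First I would verify that $\beta$ is genuinely a K\"ahler form on $\Omega$. Closedness is automatic because $d(dd^{c}\varrho)=0$, and the fact that $\beta$ is of type $(1,1)$ and positive is exactly the meaning of $\varrho$ being strictly plurisubharmonic: the associated symmetric bilinear form $\beta(\,\cdot\,,J\,\cdot\,)$ is positive definite, so $\beta$ is the fundamental form of a K\"ahler metric on the complex manifold $\Omega$. Depending on the sign convention in force one instead takes $\beta:=-dd^{c}\varrho$; in either case the pull-back identity of Proposition \ref{symp-d^c-rho} carries the sign along, so the conclusion is unaffected. This is the only point that requires a modicum of care.

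Next I would record that the pull-back statement is well posed. Since $M$ is embedded in $M^{c}$ as a totally real submanifold with $\dim_{\mathbb{R}}M=\dim_{\mathbb{C}}M^{c}$ and $\mathbb{R}\hookrightarrow\mathbb{C}$ is totally real, the product $M\times\mathbb{R}$ sits in $M^{c}\times\mathbb{C}$ as a totally real submanifold of real dimension equal to $\dim_{\mathbb{C}}(M^{c}\times\mathbb{C})$; hence $(\iota_{M\times\mathbb{R}})^{\ast}\beta$ is a bona fide two-form on $M\times\mathbb{R}$. By the displayed identity above it equals $d(e^{t}\eta+dt)$, which is the symplectification of $(M,\eta)$ described earlier in the section and is symplectic precisely because $\eta$ is a contact form. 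Setting $\beta$ as above thus finishes the argument.

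I do not expect any real obstacle here: all the analytic content — the existence of the Stein complexification, of the neighbourhood $\Omega$, and of the strictly plurisubharmonic potential with the prescribed pull-back — has been packaged into Proposition \ref{symp-d^c-rho}. The remaining work is purely bookkeeping: fixing the sign so that $dd^{c}\varrho$ (respectively $-dd^{c}\varrho$) is positive, and, should an equivariant version be wanted, invoking Corollary \ref{cor-symp-d^c-rho} in place of Proposition \ref{symp-d^c-rho} so that $\varrho$, and therefore $\beta$, may be chosen $G$-invariant.
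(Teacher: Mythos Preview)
Your proposal is correct and follows exactly the paper's approach: apply Proposition~\ref{symp-d^c-rho} and set $\beta := dd^c\varrho$, which is a K\"ahler form because $\varrho$ is strictly plurisubharmonic. The paper's proof is a single sentence to this effect; your additional remarks on sign conventions, the totally real embedding, and the equivariant case via Corollary~\ref{cor-symp-d^c-rho} are all accurate elaborations.
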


%\bigskip

\begin{proof}[{\bf Proof}]
This is a consequence of Proposition \ref{symp-d^c-rho}
because for a strictly plurisubharmonic function 
$\varrho : \Omega \subset M^{c} \times \mathbb C 
                                \rightarrow \mathbb R$, 
$\beta := dd^c \varrho$ is a K\"ahler form 
with the properties %which are 
stated in Corollary \ref{symp-Kahler-form}.
\end{proof}

\bigskip

\begin{rem}
Similarly to the equivariant statemant in Corollary \ref{cor-symp-d^c-rho},
an equivariant version of 
Corollary \ref{symp-Kahler-form} can be formulated.
\end{rem}

\bigskip

\section{Compatibility of reductions}\label{Compatibility of reductions}

\bigskip

In this section, the compatibility of the complexification
with reductions by symmetries is discussed.
Roughly speaking, the guiding question is 
whether the K\"ahlerian reduction of a complexification
of a contact manifold can be regarded as the complexification 
of the contact reduction.

\medskip

Throughout this section 
$(M, \eta)$ is assumed to be a contact manifold 
on which an extendable Lie group $G$ 
with finitely many connected components
acts properly by contact transformations, i.e.,
by leaving $\eta$ invariant.
%Let $\eta$ be a $G$-invariant contact form on $M$ and 
Fix a $G$-invariant smooth Stein complexification $M^c$ of $M$
such that $\eta = \iota_M^{\ast}(d^c \varrho)$ holds
for some smooth $G$-invariant strictly plurisubharmonic function
$\varrho : M^c \rightarrow \mathbb R$ 
(see Theorem \ref{Komplexifizierungssatz}). 
Furthermore, 
assume that there is a globalization $M^{\mathbb C}$
of the local $G^{\mathbb C}$-action on $M^c$
such that $M^c$ is openly and $G$-equivariantly embedded 
in the $G^{\mathbb C}$-manifold $M^{\mathbb C}$.

\bigskip

\subsection{Compatibility of moment maps for free actions}\label{Compatibility of moment maps for free actions}

\bigskip

%Let $G$ be an extendable Lie group with finitely many connected components
%which acts in a proper %and free 
%fashion on a contact manifold $(M, \eta)$
%and leaves $\eta$ invariant.
%Let $M^c$ be a $G$-equivariant complexification of $M$
%and $\varrho :M^c \rightarrow \mathbb R$
%be a strictly plurisubharmonic and $G$-invariant function 
%such that the embedding 
%$ \iota_M : M \hookrightarrow M^c $
%satisfies $ \iota_M^{\ast}(d^c \varrho) = \eta$ (see Theorem \ref{Komplexifizierungssatz}).
%%\footnote{This is guaranted by Theorem \ref{Komplexifizierungssatz}.}
%%Proposition \ref{Komplexifizierung-1-Formen}.
%%By Corollary \ref{mu-M-in-mu-Mc},
%%The momentum zero set of $M$
%%embeds in $(\mu_{M^c})^{-1}(0)$ by
%%$$ \iota_M : \mu_M^{-1}(0) \hookrightarrow (\mu_{M^c})^{-1}(0).$$

Under the assumptions stated at the beginning of the section,
there exists a moment map %$\mu_M$ 
on the contact manifold %$M$
\[ \begin{array}{rcl}
    \mu_M : M & \rightarrow & \mathfrak{g}^{\ast} \\
            m & \mapsto & \big ( \xi \mapsto  
                 \eta (\xi_M (m))
                 = \eta (\frac{d}{dt} \orbit{\exp(t \xi)}{m} \vert_{t=0}) \big )
   \end{array} \]
and a moment map %$\mu_{M^c}$ 
on the K\"ahler manifold %$M^c$
\[ \begin{array}{rcl}
    \mu_{M^c} : M^c & \rightarrow & \mathfrak{g}^{\ast} \\
            x & \mapsto & ( \xi \mapsto  
                          d^c \varrho(\xi_{M^c}(x)) ).
   \end{array} \]
The relation $\eta = \iota_M^{\ast}(d^c \varrho)$
implies that the K\"ahlerian moment map extends the contact moment map, i.e., 
$\mu_{M^c} \circ \iota_M = \mu_M$.
%or in other words $\mu_M = \mu_{M^c \vert_M}$.
Cauchy-Riemann geometry enters the picture, 
because the hypersurface 
$M^{\mathrm{\mathrm{CR}}} = \varrho^{-1}(0)$
plays a role as it contains $M$.
This fact makes use of the assumption that 
the K\"ahlerian moment map is defined by the potential $\varrho$.

\bigskip

\begin{lem}\label{lem-Stein-tube}
Let $(M^c, d^c \varrho)$ be a 
%Stein 
complexification of a contact manifold $(M, \eta)$ and 
$\varrho : M^c \rightarrow \mathbb{R}$ 
be a strictly plurisubharmonic function 
with $M \subset \varrho^{-1}(0)$ 
such that $d^c \varrho$ extends $\eta$ 
in the sense that 
$\eta = \iota_M^{\ast}(d^c \varrho)$.
%There is a Stein domain $\Omega \subset M^c$, 
%$M \subset \Omega$, such that 
%$\varrho^{-1}(0) \cap \Omega$
%is a smooth hypersurface.
\begin{enumerate}
\item \label{lem-Stein-tube-a}
Then possibly after shrinking $M^c$ to 
a smaller %Stein 
neighbourhood of $M$,
$\varrho^{-1}(0)$
is a smooth hypersurface in $M^c$.
\item \label{levi-convex}
%\begin{cor}\label{levi-convex}
%In the situation of Lemma \ref{lem-Stein-tube}, %on $\Omega$,
%$M^{\mathrm{CR}} := \varrho^{-1}(0)$
%is a Levi convex hypersurface.%\footnote{see \ref{The contact and Cauchy-Riemann symmetry of strictly pseudoconvex hypersurfaces}.}.
%\end{cor}
The smooth hypersurface $M^{\mathrm{CR}}:= \varrho^{-1}(0)$
%is Levi convex.
is a strongly pseudoconvex hypersurface.
\end{enumerate}
\end{lem}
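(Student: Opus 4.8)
The plan is to prove both statements by a local computation near a point $p\in M$, using the fact that $M$ is totally real in $M^c$ of half real dimension and that $\varrho$ is real analytic (or at least smooth with controlled $2$-jet along $M$). For part (\ref{lem-Stein-tube-a}), it suffices to show that $d\varrho(p)\neq 0$ for every $p\in M$; then $0$ is a regular value of $\varrho$ on a neighbourhood of $M$, and shrinking $M^c$ to such a neighbourhood makes $\varrho^{-1}(0)$ a smooth hypersurface containing $M$. To see $d\varrho(p)\neq 0$, I would argue by contradiction: if $d\varrho(p)=0$, then in particular $d^c\varrho(p)=0$, so $(\iota_M^{\ast}d^c\varrho)(p)=\eta(p)=0$, contradicting the fact that a contact form is nowhere zero. (Here I use crucially that $M$ is a contact manifold, so $\eta$ has no zeros — indeed $\eta\wedge(d\eta)^n$ is a volume form.) Shrinking $M^c$ to the open set where $d\varrho\neq 0$ is harmless since this open set still contains $M$.

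For part (\ref{levi-convex}), I need to show that the smooth hypersurface $M^{\mathrm{CR}}=\varrho^{-1}(0)$, now known to be a level set of a strictly plurisubharmonic function, is strongly pseudoconvex. This is essentially the definition: a real hypersurface $\{\varrho=0\}$ with $d\varrho\neq 0$ is strongly (Levi-)pseudoconvex precisely when the Levi form $\mathcal{L}\varrho = \sum \partial^2\varrho/\partial z_j\partial\bar z_k \, dz_j d\bar z_k$, equivalently the complex Hessian $dd^c\varrho$ (up to a positive constant), is positive definite on the complex tangent space $H_x M^{\mathrm{CR}} = T_x M^{\mathrm{CR}}\cap J(T_x M^{\mathrm{CR}})$ at each point $x\in M^{\mathrm{CR}}$. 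Since $\varrho$ is strictly plurisubharmonic on all of $M^c$, the form $\omega = -dd^c\varrho$ is a Kähler form, so $dd^c\varrho$ (hence the Levi form) is positive definite on the entire tangent space $T_x M^c$, a fortiori on the complex hyperplane $H_x M^{\mathrm{CR}}\subset T_x M^c$. Therefore $M^{\mathrm{CR}}$ is strongly pseudoconvex at every point. I should also note that $M^{\mathrm{CR}}$ inherits a CR structure of hypersurface type from the complex tangent bundle $H M^{\mathrm{CR}}$, which is where the notation $M^{\mathrm{CR}}$ comes from; this is automatic for any smooth real hypersurface in a complex manifold.

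The only point requiring care — and the main obstacle, though a mild one — is making sure the shrinking in part (\ref{lem-Stein-tube-a}) is compatible with whatever else the hypersurface is later asked to support (Stein neighbourhood basis, the $G$-action in the equivariant setting), but since $M$ has a fundamental system of Stein and $G$-invariant neighbourhoods inside $M^c$ by the constructions of Section \ref{Extension of forms}, one simply intersects the open set $\{d\varrho\neq 0\}$ with such a neighbourhood. Everything else is the standard dictionary between strict plurisubharmonicity of a defining function and strong pseudoconvexity of its regular level sets, so no further work is needed.
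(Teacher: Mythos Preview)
Your proposal is correct and follows exactly the paper's approach: for (a) the paper observes that since $\eta$ is nowhere vanishing and $\eta=\iota_M^{\ast}(d^c\varrho)$, the form $d\varrho$ (equivalently $d^c\varrho$) vanishes nowhere near $M$, and for (b) it simply says this ``is just a matter of definitions,'' which is precisely the strict-plurisubharmonicity-implies-positive-Levi-form argument you spell out. Your write-up is more detailed than the paper's two-line proof, but the substance is identical.
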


%\medskip

\begin{proof}[{\bf Proof}] %of Lemma \ref{lem-Stein-tube}}]
Since $\eta$ is nowhere vanishing on $M$
and it is the pull-back of $d^c \varrho$, 
a) %\ref{lem-Stein-tube-a}
follows, because
it is immediate that $d \varrho$ vanishes nowhere
in a neighbourhood of $M$. 
The statement b) %\ref{levi-convex}
 is just a matter of definitions.
\end{proof}

\bigskip

The action of $G$ leaves $M^{\mathrm{CR}} = \varrho^{-1}(0)$
invariant and the inclusions
$$ (M, \eta) \phantom{x} \hookrightarrow \phantom{x}
   \big( M^{\mathrm{CR}}, d^c \varrho \vert_{M^{\mathrm{CR}}} \big) 
    \phantom{x} \hookrightarrow \phantom{x} (M^c, d^c \varrho) $$
are all $G$-equivariant.
Assume that the Lie subgroup $L$ of $G$ acts freely (and properly) on 
the contact manifold $(M, \eta)$ 
and leaves $\eta$ invariant.
%
%
%The following Proposition introduces 
%a Cauchy-Riemann moment map $\mu_{M^{\mathrm{CR}}}$ 
%for the hypersurface $M^{\mathrm{CR}}$,
%which was introduced by Loose (\cite{Loose-CR-reduction}),
%and provides a proof that $\mu_{M^{\mathrm{CR}}}$
%can be identified with $\mu_{M^c \vert_{M^{\mathrm{CR}}}}$.
%
%
%The following Proposition shows the compatibility 
%of the three moment maps.
%
%
In the following proposition it is shown that 
in the setting of this work, the restriction 
$\mu_{M^c \vert_{M^{\mathrm{CR}}}}$
can be regarded as the Cauchy-Riemann moment map
defined in \cite{Loose-CR-reduction}.
This involves the natural projection 
$ \alpha_p : T_p M^{\mathrm{CR}} \rightarrow \modulo{T_p M^{\mathrm{CR}}}{H_p}, $
where $H_p = T_p M \cap J(T_p M)$.
%
%
%Since $M^{\mathrm{CR}}$
%is a hypersurface given as the zero set of a 
%strictly plurisubharmonic function, 
%every hyperplane 
%$H_p = \lbrace v \in T_p (M^{\mathrm{CR}}) \vert d^c \varrho(v) = 0 \rbrace$.
%Note that
%$ T_p M^{\mathrm{CR}} = \lbrace v \in T_p M^c \vert d \varrho(v) = 0 \rbrace $.
%
%
It follows from the definition of the operator $d^c$
that the Cauchy-Riemann tangent space can be described by
$H_p = \lbrace v \in T_p (M^{\mathrm{CR}}) \vert d^c \varrho(v) = 0 \rbrace$.
Let $H = \cup_{p \in M^{\mathrm{CR}}} H_p$ be the 
%hyperplane bundle 
Cauchy-Riemann bundle of hyperplanes
and $B$ denote the (real) line bundle $B = \modulo{T M^{\mathrm{CR}}}{H}$.
Then $\alpha$ can be considered as a $B$-valued $1$-form
which defines the Cauchy-Riemann moment map
\[ \begin{array}{rcl}
  \mu_{M^{\mathrm{CR}}} : M^{\mathrm{CR}} & \rightarrow & \mathrm{Lie}(L)^{\ast} \otimes B \\
            p & \mapsto & \big( \xi \mapsto \alpha_p(\xi_{M^{\mathrm{CR}}}(p)) \big)
\end{array} \]
for every $\xi \in \mathrm{Lie}(L)$.

\bigskip

\begin{prop}\label{prop-CR-moment}
Let $L$ act freely and properly on $M$ and $M^c$.
For the inclusions
$$ (M, \eta) \phantom{x} \hookrightarrow \phantom{x}
   \big( M^{\mathrm{CR}}, d^c \varrho \vert_{M^{\mathrm{CR}}} \big) 
   \phantom{x} \hookrightarrow \phantom{x} (M^c, \eta^c = d^c \varrho) $$
the K\"ahlerian moment map 
\[ \begin{array}{rcl}   
    \mu_{M^c} : M^c & \rightarrow & \mathrm{Lie}(L)^{\ast} \\ 
           p & \mapsto & \big( \xi \mapsto (d^c \varrho) (\xi_{M^c}(p))  
                          = \eta^c (\xi_{M^c}(p)) \big)
        \end{array} \] 
has the property that its restriction 
$ \mu_{M^{\mathrm{CR}}} := \mu_{M^c \vert_{M^{\mathrm{CR}}}} $
%can be identified with 
is the Cauchy-Riemann moment map for the 
$L$-action on $M^{\mathrm{CR}}$, 
if $\modulo{TM^{\mathrm{CR}}}{H}$ is trivialized by the mapping
$\modulo{TM^{\mathrm{CR}}}{H} \rightarrow \mathbb R$, $\alpha_p(v) \mapsto (d^c \varrho)(v)$. 
%and 
%$ \mu_M := \mu_{M^c} \vert_{M}$
%is the contact moment map on $M$ induced by $\eta$ 
%\[ \begin{array}{rcl}   
%    \mu_M : M & \rightarrow & \mathfrak{l}^{\ast} \\ 
%            m & \mapsto & ( \xi \mapsto \eta (\hat \xi(m))).
%        \end{array} \] 
\end{prop}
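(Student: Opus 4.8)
The plan is to unwind both sides of the claimed identity at an arbitrary point $p \in M^{\mathrm{CR}}$ and check that they agree as elements of $\mathrm{Lie}(L)^{\ast}$ once $\modulo{T_pM^{\mathrm{CR}}}{H_p}$ is identified with $\mathbb{R}$ via $\alpha_p(v) \mapsto (d^c\varrho)(v)$. First I would recall that the restriction $\mu_{M^c}|_{M^{\mathrm{CR}}}$ is well-defined: since $G$ (hence $L$) leaves $M^{\mathrm{CR}} = \varrho^{-1}(0)$ invariant, the fundamental vector field $\xi_{M^c}(p)$ is tangent to $M^{\mathrm{CR}}$ for every $\xi \in \mathrm{Lie}(L)$ and $p \in M^{\mathrm{CR}}$, so $\xi_{M^{\mathrm{CR}}}(p) = \xi_{M^c}(p)$ and the expression $(d^c\varrho)(\xi_{M^c}(p))$ only involves $d^c\varrho$ restricted to $T_pM^{\mathrm{CR}}$. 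Thus $\mu_{M^c}|_{M^{\mathrm{CR}}}(p)$ is a linear functional $\xi \mapsto (d^c\varrho)(\xi_{M^{\mathrm{CR}}}(p))$ on $\mathrm{Lie}(L)$.

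Next I would bring in the Cauchy-Riemann side. By the description $H_p = \{v \in T_pM^{\mathrm{CR}} \mid (d^c\varrho)(v) = 0\}$ recalled just before the statement, the map $\modulo{T_pM^{\mathrm{CR}}}{H_p} \to \mathbb{R}$, $\alpha_p(v) \mapsto (d^c\varrho)(v)$, is a well-defined linear isomorphism of the (one-dimensional) quotient with $\mathbb{R}$: it is well-defined precisely because $d^c\varrho$ kills $H_p$, injective because $H_p$ is its kernel in $T_pM^{\mathrm{CR}}$, and surjective because $d\varrho$ — hence $d^c\varrho$ — is nonzero on $M^{\mathrm{CR}}$ near $M$ by Lemma \ref{lem-Stein-tube}. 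Under this trivialization the bundle $B = \modulo{TM^{\mathrm{CR}}}{H}$ is identified with the trivial line bundle $\mathbb{R}$, so $\mathrm{Lie}(L)^{\ast} \otimes B$ becomes $\mathrm{Lie}(L)^{\ast}$, and the Cauchy-Riemann moment map $\mu_{M^{\mathrm{CR}}}(p) : \xi \mapsto \alpha_p(\xi_{M^{\mathrm{CR}}}(p))$ is carried to the functional $\xi \mapsto (d^c\varrho)(\xi_{M^{\mathrm{CR}}}(p))$.

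Then I would simply compare: the two functionals on $\mathrm{Lie}(L)$ obtained in the previous two paragraphs are literally the same formula, $\xi \mapsto (d^c\varrho)(\xi_{M^{\mathrm{CR}}}(p))$. This proves $\mu_{M^c}|_{M^{\mathrm{CR}}} = \mu_{M^{\mathrm{CR}}}$ after the stated trivialization, for every $p$, which is the assertion. I would also remark in passing on the compatibility with the contact moment map: restricting once more to $M \subset M^{\mathrm{CR}}$ and using $\eta = \iota_M^{\ast}(d^c\varrho)$ recovers $\mu_M$, consistent with the already-noted relation $\mu_{M^c} \circ \iota_M = \mu_M$.

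\textbf{Main obstacle.} The statement is essentially a bookkeeping identity, so there is no deep obstacle; the one point requiring genuine care is verifying that the trivialization $\alpha_p(v) \mapsto (d^c\varrho)(v)$ is a bona fide bundle isomorphism $B \cong \mathbb{R}$ — i.e. that it is fiberwise well-defined, nondegenerate, and smooth in $p$. Well-definedness and nondegeneracy are exactly the content of the identity $H_p = \ker\big((d^c\varrho)|_{T_pM^{\mathrm{CR}}}\big)$ together with $d\varrho \ne 0$ near $M$; smoothness is automatic since $d^c\varrho$ is a smooth $1$-form. Once this identification is pinned down, matching the two moment maps is immediate, and the only thing to double-check is that the $L$-action being free and proper on $M$ and $M^c$ (hence on $M^{\mathrm{CR}}$) guarantees that all the relevant quotients and the Cauchy-Riemann moment map of \cite{Loose-CR-reduction} are defined in the first place, so that the comparison is meaningful.
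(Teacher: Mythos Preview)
Your proposal is correct and follows essentially the same route as the paper: verify that $\alpha_p(v)\mapsto(d^c\varrho)(v)$ is a well-defined trivialization of $B=\modulo{TM^{\mathrm{CR}}}{H}$ (using $H_p=\ker\big((d^c\varrho)|_{T_pM^{\mathrm{CR}}}\big)$), and then observe that under the induced identification $\mathrm{Lie}(L)^\ast\otimes B\cong\mathrm{Lie}(L)^\ast$ the Cauchy--Riemann moment map becomes $\xi\mapsto(d^c\varrho)(\xi_{M^{\mathrm{CR}}}(p))$, which is exactly $\mu_{M^c}|_{M^{\mathrm{CR}}}$. Your extra remarks on tangency of fundamental vector fields and smoothness of the trivialization are accurate elaborations but not additional ideas.
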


%\smallskip

\begin{proof}[{\bf Proof}] %of Proposition \ref{prop-CR-moment}}]
The line bundle $B = \modulo{T M^{\mathrm{CR}}}{H}$
is trivializable in this situation by the map
\[ \begin{array}{rcl}   
    \modulo{T M^{\mathrm{CR}}}{H} & \rightarrow & \mathbb R \\ 
     \alpha_p(v) & \mapsto & d^c \varrho (v). 
        \end{array} \] 
It is well-defined because if 
$\alpha_p(v) = \alpha_p(w)$, $v-w \in H$ and 
$(d^c \varrho)(v-w) = 0 $ and therefore
$ (d^c \varrho)(v) %  = (d^c \varrho)(v-w + w) 
     = (d^c \varrho)(v-w) + (d^c \varrho)(w) 
     = (d^c \varrho)(w). $
Under this trivialization, 
$\mathrm{Lie}(L)^{\ast} \otimes B \cong \mathrm{Lie}(L)^{\ast} $
with the identification
\[ \begin{array}{rcl}   
  \mathrm{Lie}(L)^{\ast} \otimes B & \rightarrow & \mathrm{Lie}(L)^{\ast} \\
    \big( \xi \mapsto \alpha_p(\xi_{M^{\mathrm{CR}}}) \big) & \mapsto & 
    \big( \xi \mapsto d^c \varrho (\xi_{M^{c}}(p)) \big)
 \end{array} \] 
defines the Cauchy-Riemann moment map 
$\mu_{M^{\mathrm{CR}}} : M^{\mathrm{CR}} \rightarrow \mathrm{Lie}(L)^{\ast}$
by 
$$\mu_{M^{\mathrm{CR}}, \xi}(p) 
    = (\iota_{M^{\mathrm{CR}}})^{\ast}(d^c \varrho)(\xi_{M^{\mathrm{CR}}}(p))
    = d^c \varrho (\xi_{M^{c}} (p))$$
for $\xi \in \mathrm{Lie}(L)$,
where $\iota_{M^{\mathrm{CR}}} : M^{\mathrm{CR}} \hookrightarrow M^c$ 
embeds $M^{\mathrm{CR}}$ into $M^c$.
\end{proof}

\bigskip

\subsection{Cauchy-Riemann, contact and K\"ahlerian reductions}\label{CR-contact-Kahler-red}

\bigskip

It will be shown later that 
the reduction along suitable strata of orbit types 
can be described by quotients of free actions on certain submanifolds.
This is why in this subsection, 
the case of a freely acting Lie group $L$ is considered.
The properties $\mu_M = \mu_{M^{\mathrm{CR}} \vert_{M}}$
and $\mu_{M^{\mathrm{CR}}} = \mu_{M^{c} \vert_{M^{\mathrm{CR}}}}$
yield the inclusions
of the momentum zero levels 
%$ (\mu_M)^{-1}(0)$, $ (\mu_{M^{\mathrm{CR}}})^{-1}(0)$ and $ (\mu_{M^c})^{-1}(0)$
$$ (\mu_{M})^{-1} (0) \hookrightarrow 
  (\mu_{M^{\mathrm{CR}}})^{-1} (0) \hookrightarrow 
  (\mu_{M^c})^{-1} (0).$$
%$$\momentM \hookrightarrow \momentMCR \hookrightarrow \momentMc. $$
The inclusion of $(\mu_M)^{-1}(0)$ and of $(\mu_{M^{\mathrm{CR}}})^{-1}(0)$
in %the momentum zero level 
$(\mu_{M^c})^{-1}(0)$
will be examined more closely in the following.

\bigskip

\medskip

{\bf Contact and K\"ahlerian reduction}

\bigskip

Now the connection between the contact reduction of $M$ 
and the K\"ahlerian reduction of $M^c$
with respect to a freely and properly acting group $L$
is studied.
%
%
%
%The set-up can be summerized in the following diagram:
The situation for the embedding of $M$ in $M^c$ 
can be summarized in the following diagram:
\[ \begin{array}{rrcccccll}
     M \phantom{xxxxxx} & \stackrel{\iota_M}{\longhookrightarrow} \phantom{xx} &  M^c \phantom{xxx} \\
       & & \\
  \phantom{xxx}   \longhookuparrow \mbox{ }^{ \iota_{(\mu_M)^{-1}(0)} } & &  
  \phantom{xxx}   \longhookuparrow \mbox{ }^{ \iota_{(\mu_{M^c})^{-1}(0)} } \\ 
       & & \\
      (\mu_M)^{-1}(0) \phantom{xx}
      & \stackrel{\iota_{M} \vert_{(\mu_M)^{-1}(0)}}{\longhookrightarrow} & 
     (\mu_{M^c})^{-1}(0) \\
      & & \\
    \Big \downarrow \mbox{ }_{ \pi_{(\mu_M)^{-1}(0)} } & & 
   \phantom{xxx} \Big \downarrow \mbox{ }_{ \pi_{(\mu_{M^c})^{-1}(0)} } \\
      & & \\
     \modulo{(\mu_M)^{-1}(0)}{L} & \stackrel{\iota_{(\mu_M)^{-1}(0) / L}}{\longrightarrow} & 
     \modulo{(\mu_{M^c})^{-1}(0)}{L} .
   \end{array} \]
Note that Proposition \ref{free-proper-case}
shows that for this case described here, 
$\modulo{(\mu_{M^c})^{-1}(0)}{L}$
is a complex manifold and the function $\varrho_{\mathrm{red}}$,
defined by 
$\varrho_{\mathrm{red}} \circ \pi_{(\mu_{M^c})^{-1}(0)} 
   = \varrho \circ \iota_{(\mu_{M^c})^{-1}(0)} $,
is a K\"ahlerian potential. 
It can be checked that the mapping 
%\[ \begin{array}{rcl}
%       \iota_{(\mu_M)^{-1}(0) / L} : \modulo{(\mu_M)^{-1}(0)}{L} 
%               & \rightarrow & \modulo{(\mu_{M^c})^{-1}(0)}{L} \\
%       \pi_{(\mu_M)^{-1}(0)}(m) & \mapsto & \pi_{(\mu_{M^c})^{-1}(0)} (\iota_M \vert_{(\mu_M)^{-1}(0)} (m)) 
%   \end{array} \]
$ \iota_{(\mu_M)^{-1}(0) / L}$ 
%which sends $\pi_{(\mu_M)^{-1}(0)}(m)$ to 
%$ \pi_{(\mu_{M^c})^{-1}(0)} (\iota_M \vert_{(\mu_M)^{-1}(0)} (m)) $,
is well-defined.
In Proposition \ref{prop-contact-reduction-varrho},
it is shown that
$(\iota_{(\mu_M)^{-1}(0) / L})^{\ast} (d^c \varrho_{\mathrm{red}})$
is the unique
$1$-form $\eta_{\mathrm{red}}$ on $\modulo{(\mu_M)^{-1}(0)}{L}$
with the property 
\begin{equation}\label{contact-reduction-property-a}
(\pi_{(\mu_M)^{-1}(0)})^{\ast}(\eta_{\mathrm{red}}) = (\iota_{(\mu_M)^{-1}(0)})^{\ast}(\eta).
\end{equation}
%holds.
The manifold $\modulo{(\mu_M)^{-1}(0)}{L}$
with the unique $1$-form $\eta_{\mathrm{red}}$
such that 
%$(\pi_{(\mu_M)^{-1}(0)})^{\ast}(\eta_{\mathrm{red}}) = (\iota_{(\mu_M)^{-1}(0)})^{\ast}(\eta)$
(\ref{contact-reduction-property-a})
holds is called %in \cite{Loose-contact-reduction} and \cite{Willett}
the contact reduction of $(M, \eta)$ 
as defined in \cite{Loose-contact-reduction} and \cite{Willett}.

\bigskip

\begin{prop}\label{prop-contact-reduction-varrho}
Let the extendable Lie group $L$ with finitely many connected components 
act freely and properly on $M$. 
Then 
%the  $L$-complexi\-fication $(M^c, \varrho)$
%provides the contact reduction %described in \cite{Loose-contact-reduction} 
%with the $1$-form 
$ (\iota_{(\mu_M)^{-1}(0) / L})^{\ast} (d^c \varrho_{\mathrm{red}})$ 
is the unique $1$-form $\eta_{\mathrm{red}}$
on $\modulo{(\mu_M)^{-1}(0)}{L}$ 
such that 
$$(\pi_{(\mu_M)^{-1}(0)})^{\ast}(\eta_{\mathrm{red}}) = (\iota_{(\mu_M)^{-1}(0)})^{\ast}(\eta).$$
\end{prop}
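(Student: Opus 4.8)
The plan is to prove the statement in two parts: first that the form $(\iota_{(\mu_M)^{-1}(0)/L})^{\ast}(d^c\varrho_{\mathrm{red}})$ has the required pull-back property, and then that this property determines it uniquely. For the existence part, I would chase the big commutative diagram preceding the statement. Starting from the relation $\varrho_{\mathrm{red}}\circ\pi_{(\mu_{M^c})^{-1}(0)} = \varrho\circ\iota_{(\mu_{M^c})^{-1}(0)}$, applying $d^c$ (which commutes with pull-back) gives $\pi_{(\mu_{M^c})^{-1}(0)}^{\ast}(d^c\varrho_{\mathrm{red}}) = \iota_{(\mu_{M^c})^{-1}(0)}^{\ast}(d^c\varrho)$ on $(\mu_{M^c})^{-1}(0)$. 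Now restrict this identity along $\iota_M\vert_{(\mu_M)^{-1}(0)} : (\mu_M)^{-1}(0)\hookrightarrow (\mu_{M^c})^{-1}(0)$ and use commutativity of the diagram, namely $\pi_{(\mu_{M^c})^{-1}(0)}\circ(\iota_M\vert_{(\mu_M)^{-1}(0)}) = \iota_{(\mu_M)^{-1}(0)/L}\circ\pi_{(\mu_M)^{-1}(0)}$, to transfer the left-hand side into $\pi_{(\mu_M)^{-1}(0)}^{\ast}\bigl((\iota_{(\mu_M)^{-1}(0)/L})^{\ast}(d^c\varrho_{\mathrm{red}})\bigr)$. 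On the right-hand side, $\iota_{(\mu_{M^c})^{-1}(0)}\circ(\iota_M\vert_{(\mu_M)^{-1}(0)})$ factors through $\iota_M:M\hookrightarrow M^c$ followed by $\iota_{(\mu_M)^{-1}(0)}:(\mu_M)^{-1}(0)\hookrightarrow M$, so the right-hand side becomes $\iota_{(\mu_M)^{-1}(0)}^{\ast}\bigl(\iota_M^{\ast}(d^c\varrho)\bigr) = \iota_{(\mu_M)^{-1}(0)}^{\ast}(\eta)$, using the defining relation $\eta = \iota_M^{\ast}(d^c\varrho)$. This yields exactly equation (\ref{contact-reduction-property-a}) for $\eta_{\mathrm{red}} := (\iota_{(\mu_M)^{-1}(0)/L})^{\ast}(d^c\varrho_{\mathrm{red}})$, establishing existence.

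For uniqueness, I would use that $\pi_{(\mu_M)^{-1}(0)} : (\mu_M)^{-1}(0)\rightarrow \modulo{(\mu_M)^{-1}(0)}{L}$ is a surjective submersion — this follows because $L$ acts freely and properly on $(\mu_M)^{-1}(0)$, so the quotient is a smooth manifold and the projection is a principal $L$-bundle. A surjective submersion induces an injective pull-back on differential forms: if $\pi^{\ast}\alpha = \pi^{\ast}\beta$ for $1$-forms $\alpha,\beta$ on the base, then $\alpha = \beta$, since at each point one can lift tangent vectors through the submersion. Hence any two $1$-forms on $\modulo{(\mu_M)^{-1}(0)}{L}$ satisfying (\ref{contact-reduction-property-a}) agree, which gives uniqueness and identifies the reduced form with the one defined in \cite{Loose-contact-reduction} and \cite{Willett}.

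The step I expect to require the most care is verifying that $\eta_{\mathrm{red}}$ is genuinely a well-defined form on the quotient, i.e.\ that the form $\iota_{(\mu_M)^{-1}(0)}^{\ast}(\eta)$ on $(\mu_M)^{-1}(0)$ is basic with respect to the $L$-action (it annihilates vectors tangent to the $L$-orbits and is $L$-invariant). Invariance is immediate from $G$-invariance of $\eta$. For the horizontality, the key point is that for $\xi\in\mathrm{Lie}(L)$ and $p\in(\mu_M)^{-1}(0)$ one has $\eta(\xi_M(p)) = \langle\mu_M(p),\xi\rangle = 0$; this is precisely the content of lying in the zero level of the contact moment map, so the fundamental vector fields of the $L$-action lie in the kernel of $\iota_{(\mu_M)^{-1}(0)}^{\ast}(\eta)$. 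Combined with the identity derived above — which forces $(\iota_{(\mu_M)^{-1}(0)/L})^{\ast}(d^c\varrho_{\mathrm{red}})$ to be the descent — this closes the argument. The only genuine subtlety is keeping the various embeddings and projections straight in the diagram chase, but no serious analytic obstacle arises, since Proposition \ref{free-proper-case} already supplies the smooth manifold structure and the K\"ahlerian potential $\varrho_{\mathrm{red}}$.
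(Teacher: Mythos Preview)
Your approach matches the paper's: verify identity~(\ref{contact-reduction-property}) by chasing the commutative diagram, then appeal to uniqueness. The paper simply cites the uniqueness of the contact reduction from \cite{Loose-contact-reduction} and \cite{Willett}, whereas you prove it directly from the fact that $\pi_{(\mu_M)^{-1}(0)}$ is a surjective submersion, and you add the verification that $(\iota_{(\mu_M)^{-1}(0)})^{\ast}\eta$ is basic; both additions are correct and make the argument more self-contained than the paper's.

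One step needs repair, however. You justify
$\pi_{(\mu_{M^c})^{-1}(0)}^{\ast}(d^c\varrho_{\mathrm{red}}) = \iota_{(\mu_{M^c})^{-1}(0)}^{\ast}(d^c\varrho)$
by saying that $d^c$ ``commutes with pull-back'', but $d^c$ commutes with pull-back only along \emph{holomorphic} maps, and $(\mu_{M^c})^{-1}(0)$ carries no complex structure --- neither $\iota_{(\mu_{M^c})^{-1}(0)}$ nor $\pi_{(\mu_{M^c})^{-1}(0)}$ is holomorphic, so you cannot simply ``apply $d^c$'' to the scalar identity. The relation is nonetheless true, but its proof uses the momentum condition explicitly: for $p\in(\mu_{M^c})^{-1}(0)$ decompose $v\in T_p(\mu_{M^c})^{-1}(0)$ as $v = \xi_{M^c}(p) + v_\perp$ with $\xi\in\mathrm{Lie}(L)$ and $v_\perp \in (T_p(\orbit{L^{\mathbb C}}{p}))^{\perp_\omega}$, the $J$-invariant complement from Proposition~\ref{free-proper-case}. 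On $v_\perp$ the two sides agree because the reduced complex structure $J_{\mathrm{red}}$ is by construction $J$ restricted to this subspace and $d\varrho_{\mathrm{red}}\circ\pi_\ast = d\varrho$ there; on the orbit direction one has $d^c\varrho(\xi_{M^c}(p)) = \mu_\xi(p) = 0$ and $\pi_\ast(\xi_{M^c}(p)) = 0$, so both sides vanish. With this computation inserted, your diagram chase goes through and the rest of the argument is complete.
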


%\bigskip

\begin{proof}[{\bf Proof}]
The assumptions on the action of $L$ imply that 
the geometric quotients
$\modulo{(\mu_M)^{-1}(0)}{G}$ and $\modulo{(\mu_{M^c})^{-1}(0)}{G}$
are manifolds.
The %reduced 
function 
$\varrho_{\mathrm{red}} : \modulo{(\mu_{M^c})^{-1}(0)}{L} \rightarrow \mathbb R $ 
is definied by
%$$ (\pi_{(\mu_{M^c})^{-1}(0)})^{\ast} (\varrho_{\mathrm{red}}) 
%       = (\iota_{(\mu_{M^c})^{-1}(0)})^{\ast} (\varrho) ,$$
%therefore 
$\varrho_{\mathrm{red}} \circ \pi_{(\mu_{M^c})^{-1}(0)} 
  = \varrho \circ \iota_{(\mu_{M^c})^{-1}(0)}$. 
%It is to prove that for the $1$-form 
%$(\iota_{(\mu_M)^{-1}(0) / L})^{\ast} (d^c \varrho_{\mathrm{red}})$ 
The desired result follows from
the identity
\begin{equation}\label{contact-reduction-property}
 \big( \pi_{(\mu_M)^{-1}(0)} \big)^{\ast} \big( (\iota_{(\mu_M)^{-1}(0) / L})^{\ast} 
                   (d^c \varrho_{\mathrm{red}}) \big) 
       = \big( \iota_{(\mu_M)^{-1}(0)} \big)^{\ast} (\eta), 
\end{equation}
%holds. Then 
because
the uniqueness of the contact reduction implies that
the reduced contact structure is defined by the $1$-form
$ (\iota_{(\mu_M)^{-1}(0) / L})^{\ast} (d^c \varrho_{\mathrm{red}} )$.
%defines indeed the reduced contact structure.
%Because of the fact that the diagram commutes,  
%Since
%To prove (\ref{contact-reduction-property}) 
%observe that since
%$ (\iota_{(\mu_M)^{-1}(0) / L}) \circ \pi_{(\mu_M)^{-1}(0)} 
%   = \pi_{(\mu_{M^c})^{-1}(0)} \circ 
%     ( \iota_M \vert_{(\mu_M)^{-1}(0)} )$
%it follows that
%\begin{align*}
%  \big( \pi_{(\mu_M)^{-1}(0)} \big)^{\ast} \big( (\iota_{(\mu_M)^{-1}(0) / L})^{\ast} 
%                  (d^c \varrho_{\mathrm{red}}) \big) 
%%  & = (\iota_{(\mu_M)^{-1}(0) / L} \circ \pi_{(\mu_M)^{-1}(0)} )^{\ast} 
%%      (d^c \varrho_{\mathrm{red}}) \\
%%%  & = (\pi_{(\mu_{M^c})^{-1}(0) / G} \circ 
%%%       \iota_M \vert_{(\mu_M)^{-1}(0)})^{\ast} 
%%%       d^c \varrho_{\mathrm{red}} \\
%%%  & = (\iota_M \vert_{(\mu_M)^{-1}(0)})^{\ast} 
%%%      (\pi_{(\mu_{M^c})^{-1}(0) / G}^{\ast}(d^c \varrho_{\mathrm{red}}) ) \\ 
%  & = \big( \iota_M \vert_{(\mu_M)^{-1}(0)} \big)^{\ast} 
%      \big( (\iota_{(\mu_{M^c})^{-1}(0)})^{\ast}(d^c \varrho) \big)  \\
%%  & = (\iota_{(\mu_{M^c})^{-1}(0)} \circ \iota_M \vert_{(\mu_M)^{-1}(0)} )^{\ast} 
%%      (d^c \varrho)  \\
%%  & = (\iota_M \circ \iota_{(\mu_M)^{-1}(0)})^{\ast} (d^c \varrho) \\
%  & = \big( \iota_{(\mu_M)^{-1}(0)} \big)^{\ast} \big( (\iota_M)^{\ast} (d^c \varrho) \big) \\
%  & = \big( \iota_{(\mu_M)^{-1}(0)} \big)^{\ast} (\eta),
%\end{align*}
%%and this was to show.
%%\end{proof}
%and hence, $d^c \varrho_{\mathrm{red}}$ 
%fulfills property (\ref{contact-reduction-property}).
Since %properties (\ref{contact-reduction-property-a}) 
property (\ref{contact-reduction-property-a}) 
%and (\ref{contact-reduction-property-b})
holds for the %definite 
unique $1$-form 
$\eta_{\mathrm{red}}$, the $1$-form 
$ (\iota_{(\mu_{M})^{-1}(0) / L})^{\ast} (d^c \varrho_{\mathrm{red}})$
agrees with $\eta_{\mathrm{red}}$ which provides 
the contact reduction 
$\Big( \modulo{(\mu_M)^{-1}(0)}{L}, \eta_{\mathrm{red}} \Big)$.
\end{proof}

\bigskip

\begin{cor}
The $2$-form 
$\omega_{\mathrm{red}} := d \eta_{\mathrm{red}}$
on $\modulo{(\mu_M)^{-1}(0)}{L}$ satisfies
$$ (\pi_{(\mu_M)^{-1}(0)})^{\ast} \omega_{\mathrm{red}} 
  = (\iota_{(\mu_M)^{-1}(0)})^{\ast} d\eta.$$
\end{cor}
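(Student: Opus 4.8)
The plan is to deduce this corollary directly from Proposition \ref{prop-contact-reduction-varrho} by applying the exterior derivative to the defining identity of $\eta_{\mathrm{red}}$. Recall from that proposition that $\eta_{\mathrm{red}} := (\iota_{(\mu_M)^{-1}(0) / L})^{\ast} (d^c \varrho_{\mathrm{red}})$ is the unique $1$-form on $\modulo{(\mu_M)^{-1}(0)}{L}$ satisfying
$$ (\pi_{(\mu_M)^{-1}(0)})^{\ast}(\eta_{\mathrm{red}}) = (\iota_{(\mu_M)^{-1}(0)})^{\ast}(\eta). $$
First I would apply $d$ to both sides of this equation. Since the exterior derivative commutes with pull-backs, the left-hand side becomes $d\big( (\pi_{(\mu_M)^{-1}(0)})^{\ast}(\eta_{\mathrm{red}}) \big) = (\pi_{(\mu_M)^{-1}(0)})^{\ast}(d\eta_{\mathrm{red}}) = (\pi_{(\mu_M)^{-1}(0)})^{\ast}(\omega_{\mathrm{red}})$ by the definition $\omega_{\mathrm{red}} := d\eta_{\mathrm{red}}$, and the right-hand side becomes $d\big( (\iota_{(\mu_M)^{-1}(0)})^{\ast}(\eta) \big) = (\iota_{(\mu_M)^{-1}(0)})^{\ast}(d\eta)$. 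Equating the two yields exactly the claimed identity.

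The only points that deserve a sentence of justification are the naturality of $d$ with respect to the smooth maps $\pi_{(\mu_M)^{-1}(0)}$ and $\iota_{(\mu_M)^{-1}(0)}$ — both of which are smooth maps between smooth manifolds, $(\mu_M)^{-1}(0)$ being a smooth submanifold and the quotient $\modulo{(\mu_M)^{-1}(0)}{L}$ a smooth manifold under the free and proper $L$-action (as already established in the proof of Proposition \ref{prop-contact-reduction-varrho}) — so the pull-backs are well-defined on forms and intertwine with $d$. No further construction or shrinking is needed here.

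I do not expect any genuine obstacle: this is a one-line consequence of functoriality of the exterior derivative applied to an identity already proved. If anything, the only subtlety worth flagging is that $d\eta_{\mathrm{red}}$ is the reduced symplectic form of the contact reduction (matching the usual convention in \cite{Loose-contact-reduction} and \cite{Willett}), so the corollary records that this reduced symplectic form pulls back, via $\pi_{(\mu_M)^{-1}(0)}$, to the restriction of $d\eta$ to the zero-level set — the symplectic analogue of the contact statement in Proposition \ref{prop-contact-reduction-varrho}.

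\begin{proof}[{\bf Proof}]
By Proposition \ref{prop-contact-reduction-varrho}, the $1$-form
$\eta_{\mathrm{red}}$ on $\modulo{(\mu_M)^{-1}(0)}{L}$ satisfies
$$ (\pi_{(\mu_M)^{-1}(0)})^{\ast}(\eta_{\mathrm{red}})
    = (\iota_{(\mu_M)^{-1}(0)})^{\ast}(\eta). $$
Since $(\mu_M)^{-1}(0)$ is a smooth submanifold of $M$ and $L$ acts
freely and properly on it, the quotient
$\modulo{(\mu_M)^{-1}(0)}{L}$ is a smooth manifold and both
$\iota_{(\mu_M)^{-1}(0)} : (\mu_M)^{-1}(0) \hookrightarrow M$ and
$\pi_{(\mu_M)^{-1}(0)} : (\mu_M)^{-1}(0) \rightarrow \modulo{(\mu_M)^{-1}(0)}{L}$
are smooth maps. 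Applying the exterior derivative to both sides
of the displayed identity and using that $d$ commutes with pull-backs,
$$ (\pi_{(\mu_M)^{-1}(0)})^{\ast}(d \eta_{\mathrm{red}})
    = d\big( (\pi_{(\mu_M)^{-1}(0)})^{\ast}(\eta_{\mathrm{red}}) \big)
    = d\big( (\iota_{(\mu_M)^{-1}(0)})^{\ast}(\eta) \big)
    = (\iota_{(\mu_M)^{-1}(0)})^{\ast}(d\eta). $$
By definition $\omega_{\mathrm{red}} = d \eta_{\mathrm{red}}$, so this
is the asserted equality.
\end{proof}
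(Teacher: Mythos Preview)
Your proof is correct and is essentially the same as the paper's: both apply the exterior derivative to the identity $(\pi_{(\mu_M)^{-1}(0)})^{\ast}(\eta_{\mathrm{red}}) = (\iota_{(\mu_M)^{-1}(0)})^{\ast}(\eta)$ from Proposition~\ref{prop-contact-reduction-varrho} and use that $d$ commutes with pull-backs. The only cosmetic difference is that the paper writes $\eta_{\mathrm{red}}$ explicitly as $(\iota_{(\mu_M)^{-1}(0)/L})^{\ast}(d^c\varrho_{\mathrm{red}})$, so that $d\eta_{\mathrm{red}}$ appears as the pull-back of $dd^c\varrho_{\mathrm{red}}$.
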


%\bigskip

\begin{proof}[{\bf Proof}]
It follows from Proposition \ref{prop-contact-reduction-varrho} 
that
%\begin{align*}
% (\iota_{(\mu_M)^{-1}(0)})^{\ast} d\eta
%   & = d (\iota_{(\mu_M)^{-1}(0)})^{\ast}(\eta)  \\
%%   & = d ( (\pi_{(\mu_M)^{-1}(0)})^{\ast} ((\iota_{(\mu_M)^{-1}(0) / L})^{\ast} 
%%                          d^c \varrho_{\mathrm{red}}) ) \\
%   & = (\pi_{(\mu_M)^{-1}(0)})^{\ast} ( (\iota_{(\mu_M)^{-1}(0) / L})^{\ast} 
%                       (dd^c \varrho_{\mathrm{red}}) ).
%\end{align*}
$$ (\iota_{(\mu_M)^{-1}(0)})^{\ast} d\eta
    = d (\iota_{(\mu_M)^{-1}(0)})^{\ast}(\eta)  
    = (\pi_{(\mu_M)^{-1}(0)})^{\ast} ( (\iota_{(\mu_M)^{-1}(0) / L})^{\ast} 
                       (dd^c \varrho_{\mathrm{red}}) ). $$
\end{proof}

\bigskip

\medskip

{\bf Cauchy-Riemann and K\"ahlerian reduction}

\bigskip

The following result characterizes both the contact reduction 
and the Cauchy-Riemann reduction of 
$M^{\mathrm{CR}}$ 
as the hypersurface $(\varrho_{\mathrm{red}})^{-1}(0)$ 
in the K\"ahlerian reduced space $\modulo{(\mu_{M^c})^{-1}(0)}{L}$.
The following sketch illustrates 
%the situation:
the setting:
\[ \begin{array}{rccccccccl} 
    M^{\mathrm{CR}} \phantom{xxxxxx} & \stackrel{\iota_{M^{\mathrm{CR}}}}{\longhookrightarrow} & M^c  \phantom{xxxxxx} \\
  & & \\
    \longhookuparrow \mbox{ }^{^{ \iota_{(\mu_{M^{\mathrm{CR}}})^{-1}(0)} }} & & 
    \longhookuparrow \mbox{ }^{^{ \iota_{(\mu_{M^c})^{-1}(0)} }} \\
  & & \\
    (\mu_{M^{\mathrm{CR}}})^{-1} (0)   \phantom{xxx}
    & \stackrel{\iota_{M^{\mathrm{CR}}} \vert_{(\mu_{M^{\mathrm{CR}}})^{-1} (0)}}{\longhookrightarrow}  
      %%\stackrel{\iota_{(\mu_{M^{\mathrm{CR}}})^{-1} (0) \hookrightarrow (\mu_{M^c})^{-1}(0)}}{\longhookrightarrow} 
    &  (\mu_{M^c})^{-1}(0)  \phantom{xxx} \\
  & & \\
   \Big \downarrow \mbox{ }_{ \pi_{(\mu_{M^{\mathrm{CR}}})^{-1} (0)} } & & 
   \Big \downarrow \mbox{ }_{ \pi_{(\mu_{M^c})^{-1} (0)} } \\
  & & \\
%  \ktquot{M^c}{G} \cong [{\bf Es ist ja L = G nicht kompact und daher kein Isomorphismus. }]
    \modulo{(\mu_{M^{\mathrm{CR}}})^{-1}(0)}{L} 
    & \stackrel{\iota_{(\mu_{M^{\mathrm{CR}}})^{-1}(0) / L}}{\longhookrightarrow}  &  
    \modulo{(\mu_{M^c})^{-1}(0)}{L}.
\end{array} \]

%\bigskip

\medskip

\begin{prop}\label{contact-isomorphic}
The hypersurface $(\varrho_{\mathrm{red}})^{-1}(0) \subset \modulo{(\mu_{M^c})^{-1}(0)}{L}$
can be regarded in two ways:
\begin{enumerate}
\item The pull-back of the $1$-form 
      $ d^c \varrho_{\mathrm{red}}$ to  
      $(\varrho_{\mathrm{red}})^{-1}(0)$
      gives $(\varrho_{\mathrm{red}})^{-1}(0)$
      the structure of a contact manifold which 
     %% $ (\iota_{(\varrho_{\mathrm{red}})^{-1}(0) \hookrightarrow (\mu_{M^c})^{-1}(0) / L})^{\ast} (d^c \varrho_{\mathrm{red}})$
      is isomorphic to
      the contact reduced space for the $L$-action on 
      $\big( M^{\mathrm{CR}}, (\iota_{M^{\mathrm{CR}}})^{\ast} (d^c \varrho) \big)$.
\item The hypersurface $(\varrho_{\mathrm{red}})^{-1}(0)$ is isomorphic 
      as a Cauchy-Riemann manifold to the Cauchy-Riemann reduction of $M^{\mathrm{CR}}$
      with respect to $L$.
\end{enumerate}
\end{prop}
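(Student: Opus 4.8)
The plan is to realize the hypersurface $(\varrho_{\mathrm{red}})^{-1}(0)$ concretely as the $L$-quotient of $(\mu_{M^{\mathrm{CR}}})^{-1}(0)$ and then to transport the contact and the Cauchy-Riemann structures across the map $\iota_{(\mu_{M^{\mathrm{CR}}})^{-1}(0) / L}$ of the diagram preceding the statement. First I would check that this map is a diffeomorphism onto $(\varrho_{\mathrm{red}})^{-1}(0)$: since $\varrho_{\mathrm{red}} \circ \pi_{(\mu_{M^c})^{-1}(0)} = \varrho \circ \iota_{(\mu_{M^c})^{-1}(0)}$, a class $[x]$ lies in $(\varrho_{\mathrm{red}})^{-1}(0)$ exactly when $\varrho(x) = 0$, i.e. when $x \in M^{\mathrm{CR}} \cap (\mu_{M^c})^{-1}(0)$, and by Proposition \ref{prop-CR-moment} this intersection equals $(\mu_{M^{\mathrm{CR}}})^{-1}(0)$. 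Because $L$ acts freely and properly, its orbits through $(\mu_{M^{\mathrm{CR}}})^{-1}(0)$ stay inside $(\mu_{M^{\mathrm{CR}}})^{-1}(0)$, and $(\mu_{M^{\mathrm{CR}}})^{-1}(0) \hookrightarrow (\mu_{M^c})^{-1}(0)$ is an embedding, so the induced map on quotients is an embedding with image $(\varrho_{\mathrm{red}})^{-1}(0)$; here I would also arrange, after shrinking $M^c$ if necessary, that $M^{\mathrm{CR}}$ meets $(\mu_{M^c})^{-1}(0)$ cleanly, so that $T_x(\mu_{M^{\mathrm{CR}}})^{-1}(0) = T_x M^{\mathrm{CR}} \cap T_x(\mu_{M^c})^{-1}(0)$.

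The computational heart is the identity $(\pi_{(\mu_{M^c})^{-1}(0)})^{\ast}(d^c \varrho_{\mathrm{red}}) = (\iota_{(\mu_{M^c})^{-1}(0)})^{\ast}(d^c \varrho)$ on $(\mu_{M^c})^{-1}(0)$, which is essentially the identity underlying Propositions \ref{free-proper-case} and \ref{prop-contact-reduction-varrho}. Using the splitting $T_x(\mu_{M^c})^{-1}(0) = T_x(\orbit{L}{x}) \oplus (T_x(\orbit{L^{\mathbb C}}{x}))^{\bot_{\omega}}$ from Proposition \ref{free-proper-case}, the differential $D\pi_{(\mu_{M^c})^{-1}(0)}$ is complex linear from the second summand onto the tangent space of the K\"ahler reduced manifold, hence it intertwines $d^c \varrho$ and $d^c \varrho_{\mathrm{red}}$ there; on $T_x(\orbit{L}{x})$ both sides vanish, the left one because $D\pi$ kills the orbit directions and the right one because $d^c \varrho(\xi_{M^c}(x)) = \mu_{M^c, \xi}(x) = 0$ on $(\mu_{M^c})^{-1}(0)$.

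For part a) note that $\varrho_{\mathrm{red}}$ is strictly plurisubharmonic and that the restriction of $d^c \varrho_{\mathrm{red}}$ to $(\varrho_{\mathrm{red}})^{-1}(0)$ is nowhere zero, so $(\varrho_{\mathrm{red}})^{-1}(0)$ is strongly pseudoconvex and this $1$-form is a contact form. Combining the bottom square of the diagram above with the identity of the previous paragraph gives $(\pi_{(\mu_{M^{\mathrm{CR}}})^{-1}(0)})^{\ast}\big( (\iota_{(\mu_{M^{\mathrm{CR}}})^{-1}(0) / L})^{\ast}(d^c \varrho_{\mathrm{red}}) \big) = (\iota_{(\mu_{M^{\mathrm{CR}}})^{-1}(0)})^{\ast}\big( (\iota_{M^{\mathrm{CR}}})^{\ast}(d^c \varrho) \big)$, which is precisely the defining property of the contact reduction of $\big( M^{\mathrm{CR}}, (\iota_{M^{\mathrm{CR}}})^{\ast}(d^c \varrho) \big)$; by the uniqueness of the contact reduction (exactly as in Proposition \ref{prop-contact-reduction-varrho}, cf. \cite{Loose-contact-reduction}, \cite{Willett}) the transported contact structures agree, which is the assertion of a).

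For part b), the Cauchy-Riemann structure induced on $(\varrho_{\mathrm{red}})^{-1}(0)$ as a hypersurface of the complex manifold $\modulo{(\mu_{M^c})^{-1}(0)}{L}$ has bundle $H^c_{[x]} = \ker(d^c \varrho_{\mathrm{red}}) \cap T_{[x]}(\varrho_{\mathrm{red}})^{-1}(0)$ and complex structure the restriction of the reduced $J$. Using the core identity together with $\pi^{\ast} d\varrho_{\mathrm{red}} = \iota^{\ast} d\varrho$ one sees that $D\pi_{(\mu_{M^c})^{-1}(0)}$ maps $H_x \cap (T_x(\orbit{L^{\mathbb C}}{x}))^{\bot_{\omega}}$ isomorphically onto $H^c_{[x]}$, while $T_x(\orbit{L}{x}) \subset H_x$ because $d^c \varrho(\xi_{M^c}(x)) = 0$ on $(\mu_{M^c})^{-1}(0)$; since $T_x(\mu_{M^{\mathrm{CR}}})^{-1}(0) = T_x(\orbit{L}{x}) \oplus \big( (T_x(\orbit{L^{\mathbb C}}{x}))^{\bot_{\omega}} \cap T_x M^{\mathrm{CR}} \big)$, the space $H_x \cap T_x(\mu_{M^{\mathrm{CR}}})^{-1}(0)$ pushes forward under $D\pi$ onto $H^c_{[x]}$, and as both $(T_x(\orbit{L^{\mathbb C}}{x}))^{\bot_{\omega}}$ and $H_x$ are $J$-invariant and $D\pi$ is complex linear on the $\omega$-orthogonal summand, the complex structure descends to the restriction of the reduced $J$. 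This matches the description of the Cauchy-Riemann reduction of $M^{\mathrm{CR}}$ given in \cite{Loose-CR-reduction}, so $\iota_{(\mu_{M^{\mathrm{CR}}})^{-1}(0) / L}$ is a Cauchy-Riemann isomorphism. The step I expect to be the main obstacle is exactly this matching in b): translating the abstractly defined Cauchy-Riemann reduction — the quotient of the zero set of the $B$-valued moment map with its induced Levi and complex structure — into the hypersurface picture inside the K\"ahler reduced space, and carrying out the bookkeeping that guarantees the cleanness of $M^{\mathrm{CR}} \cap (\mu_{M^c})^{-1}(0)$ needed for all the tangent-space splittings above.
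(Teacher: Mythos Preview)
Your proposal is correct and, for part a), matches the paper's argument almost exactly: both use commutativity of the diagram together with the uniqueness of the reduced contact form to conclude that $(\iota_{(\mu_{M^{\mathrm{CR}}})^{-1}(0) / L})^{\ast}(d^c\varrho_{\mathrm{red}})$ is the reduced contact form. Your additional step of writing out the ``core identity'' $(\pi_{(\mu_{M^c})^{-1}(0)})^{\ast}(d^c\varrho_{\mathrm{red}}) = (\iota_{(\mu_{M^c})^{-1}(0)})^{\ast}(d^c\varrho)$ via the splitting of Proposition~\ref{free-proper-case} makes explicit what the paper treats as an immediate consequence of the definition of $\varrho_{\mathrm{red}}$ and diagram commutativity.

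For part b) the routes diverge somewhat. The paper argues briefly: $\varrho_{\mathrm{red}}$ is strictly plurisubharmonic with $0$ still a regular value, so $(\varrho_{\mathrm{red}})^{-1}(0)$ is a CR hypersurface; the map $\modulo{(\mu_{M^{\mathrm{CR}}})^{-1}(0)}{L} \to (\varrho_{\mathrm{red}})^{-1}(0)$ is a bijection; since $L$ acts by holomorphic transformations preserving $M^{\mathrm{CR}}$, the projection $\pi_{(\mu_{M^{\mathrm{CR}}})^{-1}(0)}$ is a Cauchy--Riemann map; and one then invokes Loose's uniqueness theorem (\cite{Loose-CR-reduction}, Theorem~1.2) that this projection determines a unique CR structure on the quotient, which therefore must coincide with the hypersurface CR structure. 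You instead carry out the identification of the CR tangent bundles $H_x \cap T_x(\mu_{M^{\mathrm{CR}}})^{-1}(0) \twoheadrightarrow H^c_{[x]}$ by hand, using the $\omega$-orthogonal splitting and $J$-invariance. Your approach is more self-contained and makes the linear algebra transparent, at the cost of the cleanness bookkeeping you correctly flag; the paper's approach trades this for a black-box appeal to \cite{Loose-CR-reduction}, which already packages that verification. Either route is sound.
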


\bigskip

\begin{rem}
%By Lemma \ref{lem-B} it is immediately clear that
%$( M^{\mathrm{CR}}_{\mathrm{red}}, 
 % \iota^{\ast}_{M^{\mathrm{CR}}_{\mathrm{red}}} (d^c \varrho_{\mathrm{red}}))$ 
Since $\varrho_{\mathrm{red}}$ is strictly plurisubharmonic,
the form
%$\big( (\varrho_{\mathrm{red}})^{-1}(0), 
%  (\iota_{M^{\mathrm{CR}}_{\mathrm{red}}})^{\ast} (d^c \varrho_{\mathrm{red}}) \big)$
$d^c \varrho_{\mathrm{red}}$ pulled back to $ (\varrho_{\mathrm{red}})^{-1}(0)$  
is a contact form.
\end{rem}

%\bigskip

\smallskip

\begin{proof}[{\bf Proof}] \hfill
\begin{enumerate}
\item
%Loose (\cite{Loose-contact-reduction}) proves that 
As mentioned in Proposition \ref{prop-contact-reduction-varrho} 
there is a unique contact structure 
$\eta_{\mathrm{red}}$ on the reduced space $\modulo{(\mu_{M^{\mathrm{CR}}})^{-1}(0)}{L}$ 
such that the identity
$$ \big( \iota_{(\mu_{M^{\mathrm{CR}}})^{-1}(0)} \big)^{\ast} 
   \big( (\iota_{M^{\mathrm{CR}}})^{\ast} (d^c \varrho) \big)
     = \big( \pi_{(\mu_{M^{\mathrm{CR}}})^{-1}(0)} \big)^{\ast} (\eta_{\mathrm{red}}) $$
holds. 
The commutativity of the diagram above shows that
$$ \big( \pi_{(\mu_{M^{\mathrm{CR}}})^{-1}(0)} \big)^{\ast} 
    \big( (\iota_{(\mu_{M^{\mathrm{CR}}})^{-1}(0) / L})^{\ast} (d^c \varrho_{\mathrm{red}}) \big) 
 = \big( \iota_{M^{\mathrm{CR}}} \circ \iota_{(\mu_{M^{\mathrm{CR}}})^{-1}(0)} \big)^{\ast} 
    (d^c \varrho) $$
and therefore
$ \big( \pi_{(\mu_{M^{\mathrm{CR}}})^{-1} (0)} \big)^{\ast} 
   \big( (\iota_{M^{\mathrm{CR}}})^{\ast} (d^c \varrho_{\mathrm{red}}) \big)   
  = \big( \iota_{M^{\mathrm{CR}}} \circ \iota_{(\mu_{M^{\mathrm{CR}}})^{-1}(0)} \big)^{\ast} 
    (d^c \varrho)  $.
Since the reduced form is the unique $1$-form with this property 
it follows that 
$ (\iota_{(\mu_{M^{\mathrm{CR}}})^{-1}(0) / L})^{\ast} (d^c \varrho_{\mathrm{red}})$
gives the contact structure. 
\item
Let $\varrho_{\mathrm{red}}$
be the function on
the K\"ahlerian reduction 
$\modulo{(\mu_{M^c})^{-1}(0)}{L}$ % \cong \modulo{\mu^{-1}(0)}{L}$
which is induced by the restriction 
$\varrho \vert_{(\mu_{M^c})^{-1}(0)}$.
This is a strictly plurisubharmonic function, 
and if $0$ is a regular value of $\varrho$, 
$0$ remains a regular value of $\varrho_{\mathrm{red}}$.
The map 
$\modulo{(\mu_{M^{\mathrm{CR}}})^{-1}(0)}{L} \hookrightarrow \modulo{(\mu_{M^c})^{-1}(0)}{L} $ 
induces a bijection between 
$\modulo{(\mu_{M^{\mathrm{CR}}})^{-1}(0)}{L}$ and 
$(\varrho_{\mathrm{red}})^{-1}(0)$.
Since the group action on $M^c$ 
is by holomorphic transformations 
and leaves the Cauchy-Riemann hypersurface
$M^{\mathrm{CR}}$
invariant, the induced action on $M^{\mathrm{CR}}$
is by Cauchy-Riemann diffeomorphisms.
The strictly plurisubharmonic function 
$\varrho$ defines a Cauchy-Riemann submanifold
$ \varrho^{-1}(0) \cap (\mu_{M^c})^{-1}(0)$
which is mapped to 
$ (\varrho_{\mathrm{red}})^{-1}(0) \subset \modulo{(\mu_{M^c})^{-1}(0)}{L}$
by the Cauchy-Riemann map
%$$ \pi_{(\mu_{M^{\mathrm{CR}}})^{-1}(0) } : (\mu_{M^{\mathrm{CR}}})^{-1}(0) 
%        \rightarrow \modulo{(\mu_{M^{\mathrm{CR}}})^{-1}(0)}{L}$$
$ \pi_{(\mu_{M^{\mathrm{CR}}})^{-1}(0) }$.
%This is because
%$ (\mu_{M^{\mathrm{CR}}})^{-1}(0) = (\mu_{M^c})^{-1}(0) \cap M^{\mathrm{CR}} $
%and the function $\varrho_{\mathrm{red}}$ satisfies
%$(\pi_{(\mu_{M^c})^{-1}(0)})^{\ast} \varrho_{\mathrm{red}} 
%    = \varrho \vert_{(\mu_{M^c})^{-1}(0)} .$
Since Loose (\cite{Loose-CR-reduction}, Theorem 1.2)
proves that the projection 
$ \pi_{(\mu_{M^{\mathrm{CR}}})^{-1}(0) } $ 
%$: (\mu_{M^{\mathrm{CR}}})^{-1}(0) \rightarrow \modulo{(\mu_{M^{\mathrm{CR}}})^{-1}(0)}{L} $$
defines a unique Cauchy-Riemann structure on 
$\modulo{(\mu_{M^{\mathrm{CR}}})^{-1}(0)}{L} $,
$(\varrho_{\mathrm{red}})^{-1}(0)$ can be regarded as 
the Cauchy-Riemann reduction of $\varrho^{-1}(0)$
with respect to $L$.
\end{enumerate}
\end{proof}

\bigskip

\begin{rem}
In particular, the % $n$-dimensional 
contact manifold $(M, \eta)$
is embedded in the $(2n-1)$-dimensional 
contact and Cauchy-Riemann manifold $(M^{\mathrm{CR}}, \eta^{\mathrm{CR}})$
with the contact form 
%$\eta^{\mathrm{CR}} = d^c \varrho \vert_{M^{\mathrm{CR}}}$.
$\eta^{\mathrm{CR}} = (\iota_{M^{\mathrm{CR}}})^{\ast}(d^c \varrho)$.
\end{rem}

\bigskip

The following proposition summarizes the results on 
the compatibility of the respective reductions.

\bigskip

\begin{prop}\label{compatibility-Kahler}
Let $L$ be an extendable Lie group and 
$L \times M^c \rightarrow M^c$ 
a free and proper action that extends 
$L \times M \rightarrow M$ and leaves 
$\varrho : M^c \rightarrow \mathbb R$ invariant.
%Then the inclusions of Corollary \ref{cor-inclusion} induce a 
Then there is the following commutative diagram
\[ \begin{array}{rcccccl}   
 (\mu_{M})^{-1} (0) \phantom{xx}
     & \hookrightarrow &
  (\mu_{M^{\mathrm{CR}}})^{-1} (0) \phantom{xx}
     & \hookrightarrow &
 (\mu_{M^c})^{-1} (0) \phantom{xx} \\
   & & \\
  \Big \downarrow \mbox{ }^{ \pi_{(\mu_M)^{-1}(0)} } & & \phantom{x}
  \Big \downarrow \mbox{ }^{ \pi_{(\mu_{M^{\mathrm{CR}}})^{-1}(0)} } & & \phantom{x} 
  \Big \downarrow \mbox{ }^{ \pi_{(\mu_{M^c})^{-1}(0)} } \\
   & & \\
\modulo{(\mu_{M})^{-1} (0)}{L} 
     & \hookrightarrow &
 \modulo{ (\mu_{M^{\mathrm{CR}}})^{-1} (0)}{L} 
     & \hookrightarrow &
 \modulo{(\mu_{M^c})^{-1} (0)}{L}
\end{array} \] 
%between manifolds.
%of differentiable manifolds.
of smooth maps.
\end{prop}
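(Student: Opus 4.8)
The plan is to assemble the diagram purely from the compatibility statements already established, using nothing beyond the universal property of geometric quotients by free proper actions. First I would record the horizontal inclusions of the top row. By the relation $\mu_{M^c} \circ \iota_M = \mu_M$ noted earlier and by Proposition \ref{prop-CR-moment}, which gives $\mu_{M^{\mathrm{CR}}} = \mu_{M^c}|_{M^{\mathrm{CR}}}$ under the chosen trivialization, we have, as subsets of $M^c$,
$$ (\mu_M)^{-1}(0) = (\mu_{M^c})^{-1}(0) \cap M \ \subset\ (\mu_{M^{\mathrm{CR}}})^{-1}(0) = (\mu_{M^c})^{-1}(0) \cap M^{\mathrm{CR}} \ \subset\ (\mu_{M^c})^{-1}(0), $$
using that $M \subset M^{\mathrm{CR}} = \varrho^{-1}(0)$. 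The top-row maps are the resulting embeddings, and they are $L$-equivariant because the $L$-action on $M^c$ restricts to $M^{\mathrm{CR}}$ (which is $L$-invariant since $\varrho$ is $L$-invariant) and to $M$.

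Next I would check that all three quotients are smooth manifolds and that the three vertical maps are submersions. For $(\mu_{M^c})^{-1}(0)$ this is Proposition \ref{free-proper-case}: freeness of $L$ forces $\mathrm{rank}(\mu_{M^c}) = \dim \mathrm{Lie}(L)$ everywhere, so $(\mu_{M^c})^{-1}(0)$ is a smooth submanifold of $M^c$, and, $L$ acting freely and properly, $\modulo{(\mu_{M^c})^{-1}(0)}{L}$ is a manifold with $\pi_{(\mu_{M^c})^{-1}(0)}$ a submersion. The same regular-value argument — exactly the one already invoked in the proof of Proposition \ref{prop-contact-reduction-varrho} — applies to $\mu_M$ and to $\mu_{M^{\mathrm{CR}}}$, since the $L$-action restricted to $M$ and to $M^{\mathrm{CR}}$ is again free and proper; hence $\modulo{(\mu_M)^{-1}(0)}{L}$ and $\modulo{(\mu_{M^{\mathrm{CR}}})^{-1}(0)}{L}$ are manifolds with submersive projections.

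Then I would produce the bottom row and verify commutativity. Each top-row embedding is smooth and $L$-equivariant, so composing it with the projection of its target yields a smooth $L$-invariant map; by the universal property of the geometric quotient it factors uniquely through a smooth map on the source quotient, and the corresponding square commutes by construction. I would then check that these descended maps are injective, so that the symbol $\hookrightarrow$ is justified in the bottom row: if $x, y \in (\mu_M)^{-1}(0)$ lie in a common $L$-orbit inside $(\mu_{M^{\mathrm{CR}}})^{-1}(0)$, say $y = \ell \cdot x$ with $\ell \in L$, then since $M$ is $L$-invariant the point $\ell \cdot x$ already lies in $M$, so $x$ and $y$ are in the same $L$-orbit of $(\mu_M)^{-1}(0)$; the identical argument handles $\modulo{(\mu_{M^{\mathrm{CR}}})^{-1}(0)}{L} \hookrightarrow \modulo{(\mu_{M^c})^{-1}(0)}{L}$ and the long composite. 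Commutativity of the outer rectangle follows from that of the two squares.

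The only genuinely delicate point is the smoothness of the zero level sets of the contact and Cauchy--Riemann moment maps, i.e. that $0$ is a regular value of $\mu_M$ and of $\mu_{M^{\mathrm{CR}}}$ under a free action; but this is precisely the input already used in Proposition \ref{prop-contact-reduction-varrho} and is standard in contact and Cauchy--Riemann reduction, so no new argument is required and the proof is a short assembly of what has been proved.
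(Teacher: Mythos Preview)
Your proposal is correct and follows essentially the same approach as the paper: both argue that freeness of the $L$-action forces the three momentum zero levels to be smooth (constant orbit dimension $\Rightarrow$ full rank of the moment maps), that free and proper actions give smooth quotients with submersive projections, and that the $L$-equivariant top-row inclusions descend to smooth maps on the quotients. You simply spell out in more detail what the paper leaves implicit, notably the explicit use of the universal property to build the bottom row and the verification that the descended maps are injective.
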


%\bigskip

\begin{proof}[{\bf Proof}]
The momentum zero levels 
$(\mu_{M})^{-1} (0)$, 
$(\mu_{M^{\mathrm{CR}}})^{-1} (0)$ and 
$(\mu_{M^c})^{-1} (0)$ 
are smooth because the $L$-orbits have constant dimensions.
Since the three actions of $L$ are proper and free, 
the three quotients 
$$ \modulo{(\mu_{M})^{-1} (0)}{L} \mbox{ and } 
   \modulo{ (\mu_{M^{\mathrm{CR}}})^{-1} (0)}{L} \mbox{ and } 
   \modulo{(\mu_{M^c})^{-1} (0)}{L} $$
are differentiable manifolds and the natural projections 
%\begin{itemize}
%\item $\pi_{(\mu_M)^{-1}(0)} : (\mu_{M})^{-1} (0) \rightarrow \modulo{(\mu_{M})^{-1} (0)}{L}$
%\item $ \pi_{(\mu_{M^{\mathrm{CR}}})^{-1}(0)} : (\mu_{M^{\mathrm{CR}}})^{-1} (0) \rightarrow
%             \modulo{ (\mu_{M^{\mathrm{CR}}})^{-1} (0)}{L}$ 
%\item $\pi_{(\mu_{M^c})^{-1}(0)} : (\mu_{M^c})^{-1} (0) \rightarrow 
%         \modulo{(\mu_{M^c})^{-1} (0)}{L}$ 
%\end{itemize}
$$ \pi_{(\mu_M)^{-1}(0)} \mbox{ and } 
   \pi_{(\mu_{M^{\mathrm{CR}}})^{-1}(0)} \mbox{ and } 
   \pi_{(\mu_{M^c})^{-1}(0)} $$
are differentiable maps as well as the induced inclusions
$$ \modulo{(\mu_{M})^{-1} (0)}{L} 
    \phantom{x} \hookrightarrow \phantom{x}
  \modulo{ (\mu_{M^{\mathrm{CR}}})^{-1} (0)}{L} 
    \phantom{x} \hookrightarrow \phantom{x} 
 \modulo{(\mu_{M^c})^{-1} (0)}{L} .$$
\end{proof}

%The situation can be described with this commutative diagramme:

%\begin{figure}[h!]
%\begin{center}
%\epsfxsize=\textwidth
%\input{diagramm5.pstex_t}
%\end{center}
%\end{figure}

%\begin{figure}[h!]
%  \begin{center}
%    \leavevmode
%    \epsfxsize=0.5\textwidth
%    \epsffile{menge-so-5.eps}
%  \end{center}
%\caption{Die Menge  $\pi( \lbrace \det(D \pi)=0 \rbrace)$ im Fall
%           $G = \mathrm{SO}_5$}
%\end{figure}

\bigskip

\subsection{Compatibility of reduced strata}\label{Compatibility of reduced strata}

\bigskip

The results of the %previous chapter 
Subsections \ref{Compatibility of moment maps for free actions} 
and \ref{CR-contact-Kahler-red}
are now applied to general proper actions 
on contact manifolds $(M, \eta)$
and their complexifications $M^c$.
Let $H$ be a compact subgroup of $G$. 
%Sjamaar (\cite{Sjamaar}) examines 
The isotropy types of $H$ define a stratification of $M$ (\cite{Sjamaar}).
The stratum
$ (M^c)_{(H)} = \lbrace x \in M^c \vert 
                     \exists g_0 \in G :  g_0 G_x g_0^{-1} = H \rbrace $
of points in $M^c$ with isotropy type $H$
%These manifolds 
is $G$-invariant and contains the complex submanifold
$M^c_H = \lbrace x \in M^c \vert G_x = H \rbrace.$
Then
$$ \modulo{(\mu_{M^c})^{-1}(0) \cap M^c_{(H)}}{G} \cong 
   \modulo{(\mu_{M^c})^{-1}(0) \cap M^c_{H}}{L},$$
where $L = \modulo{N_G(H)}{H}$ acts freely
(\cite{Greb-Heinzner}, \cite{Sjamaar}).
Proposition \ref{prop-contact-reduction-varrho}
implies that it is a K\"ahler manifold.
To abbreviate, define
$$\momentMcKH := (\mu_{M^c})^{-1}(0) \cap M^c_{(H)} \mbox{ and } 
  \momentMcH := (\mu_{M^c})^{-1}(0) \cap M^c_{H}$$
and similarly, in the contact case, 
$$\momentMKH := (\mu_{M})^{-1}(0) \cap M_{(H)} \mbox{ and }
  \momentMH := (\mu_{M})^{-1}(0) \cap M_{H}.$$
For future reference, 
the necessary facts for the K\"ahlerian reduction along the strata
$\momentMcKH$ are summarized here;
%The stratification results used here 
they are well known 
(%see \cite{Ammon}, 
   %  \cite{Guillemin-Sternberg}, 
     \cite{HHL}, 
     \cite{Lerman-Willett}, \cite{Willett}, 
     \cite{Sjamaar}).
%
%
%\begin{prop}[{\cite{Sjamaar}}]\label{KN-cap-conj-isotropy} 
%\hfill
\begin{enumerate}
\item 
Let $x_0 \in (\mu_{M^c})^{-1}(0)$ 
and let $V$ be the orthogonal complement to  
%$T_{x_0}(\orbit{G}{x_0}) \oplus J T_{x_0}(\orbit{G}{x_0}) $
the tangent space of the local $G^{\mathbb C}$-orbit through $x_0$
with respect to the K\"ahlerian metric.     
The momentum zero level along the stratum $(M^c)_{(H)}$, i.e., 
%$ (\mu_{M^c})^{-1}(0) \cap (M^c)_{(H)} $, 
$\momentMcKH$,
is locally and equivariantly isomorphic to $G \times^{H} V_H $ 
     % $ (\mu_{M^c})^{-1}(0) \cap (M^c)_{(H)} \cong G \times^{H} V_H ,$ 
     % $ (\mu_{(M^c)_{(H)}})^{-1}(0) \cong G \times^{H} V_H ,$ 
      where 
      $V_H = \lbrace v \in V \vert h \cdot v = v \mbox{ for all } h \in H \rbrace$.
The K\"ahlerian reduced space 
%$ \modulo{(\mu_{M^c})^{-1}(0) \cap (M^c)_{(H)}}{G}$ 
$\modulo{\momentMcKH}{G}$
is locally homeomorphic to $ V_H $
    %  $ \modulo{(\mu_{M^c})^{-1}(0) \cap (M^c)_{(H)}}{G} \cong V_H .$
    %  $ \modulo{(\mu_{(M^c)_{(H)}})^{-1}(0)}{G} \cong V_H .$
(\cite{Sjamaar}).
\item
The Kempf-Ness reduced space
$ \modulo{(\mu_{M^c})^{-1}(0)}{G} $
can be stratified into the strata
%$$\modulo{(\mu_{M^c})^{-1}(0) \cap (M^c)_{(H)}}{G} ,$$
%$$ \modulo{(\mu_{(M^c)_{(H)}})^{-1}(0)}{G} ,$$
$\modulo{\momentMcKH}{G} ,$
which inherit a natural symplectic and complex structure.
\end{enumerate}
Proposition \ref{prop-contact-reduction-varrho} 
can be applied
to the free action of $L := \modulo{N_G(H)}{H}$ 
on 
%$ (\mu_M)^{-1}(0) \cap M_H$.
%$ (\mu_{M_H})^{-1}(0) $.
$\momentMH$, 
where $N_G(H)$ is the normalizer of $H$ in $G$.
%
%
%
%
%\begin{prop}[\cite{Willett}, \cite{Lerman-Willett}]
%\label{prop-Willett}
%Let $(M, \eta)$ be a contact manifold 
In the case of a contact manifold $(M, \eta)$
on which $G$ acts in a proper fashion 
by contact transformations, 
recall the following facts (\cite{Willett}, \cite{Lerman-Willett}):
\begin{enumerate}
\item[c)] %(\cite{Willett}, Proposition 4.1)
%Then
The stratum
$ M_H = \lbrace m \in M \vert G_m = H \rbrace $
is a contact manifold and for the stratum
$ M_{(H)} = \lbrace m \in M \vert G_m \mbox{ is conjugate to } H \rbrace $
the quotients
%$$ \modulo{(\mu_M)^{-1}(0) \cap M_H}{L} 
%  = \modulo{(\mu_M)^{-1}(0) \cap M_{(H)}}{G}  $$
%$$ \modulo{(\mu_{M_H})^{-1}(0)}{\modulo{N_G(H)}{H}} 
%   \cong \modulo{(\mu_{M_{(H)}})^{-1}(0)}{G} $$
$$ \modulo{\momentMH}{L} = \modulo{\momentMKH}{G}$$
are naturally isomorphic manifolds, 
where $L = \modulo{N_G(H)}{H}$ acts freely and properly.
%
%Furthermore,
\item[d)] %(\cite{Willett}, \cite{Lerman-Willett})
The manifold $\modulo{\momentMH}{L} $ 
carries a uniquely induced contact form $\eta_{\mathrm{red}}$
with the property
%%$$ (\iota_{(\mu_M)^{-1}(0) \cap M_H \hookrightarrow M})^{\ast} (\eta) 
%%  = (\pi_{(\mu_M)^{-1}(0) \cap M_H \rightarrow \mu^{-1}(0) \cap M_H / N_G(H)})^{\ast} 
%%     (\eta_{\mathrm{red}}) ,$$
%$$ (\iota_{(\mu_M)^{-1}(0) \cap M_H})^{\ast} (\eta) 
%  = (\pi_{(\mu_M)^{-1}(0) \cap M_H})^{\ast} 
%     (\eta_{\mathrm{red}}) ,$$
%$$ (\iota_{(\mu_{M_H})^{-1}(0)})^{\ast} (\eta) 
%  = (\pi_{(\mu_{M_H})^{-1}(0)})^{\ast} 
%     (\eta_{\mathrm{red}}) ,$$
$$ (\iota_{\momentMH})^{\ast} (\eta) 
  = (\pi_{\momentMH})^{\ast} 
     (\eta_{\mathrm{red}}) ,$$
where
%$\iota_{(\mu_M)^{-1}(0) \cap M_H} : (\mu_M)^{-1}(0) \cap M_H \hookrightarrow M $ 
%$\iota_{(\mu_{M_H})^{-1}(0)} : (\mu_{M_H})^{-1}(0) \hookrightarrow M $ 
$\iota_{\momentMH} : \momentMH \hookrightarrow M $ 
and 
%$\pi_{(\mu_M)^{-1}(0) \cap M_H}$ 
$\pi_{\momentMH}$
is the projection of 
%$(\mu_M)^{-1}(0) \cap M_H$ to $ \modulo{\mu^{-1}(0) \cap M_H}{L} .$
$\momentMH$ to $ \modulo{\momentMH}{L} .$
%$$\pi_{(\mu_M)^{-1}(0) \cap M_H} : (\mu_M)^{-1}(0) \cap M_H \rightarrow 
%                                  \modulo{\mu^{-1}(0) \cap M_H}{L} .$$
%$$\pi_{(\mu_{M_H})^{-1}(0)} : (\mu_{M_H})^{-1}(0) \rightarrow 
%                                  \modulo{(\mu_{M_H})^{-1}(0)}{\modulo{N_G(H)}{H}} .$$
\end{enumerate}
%\hfill $\Box$
%\end{prop}
%
%
%\begin{proof}[{\bf Proof}]
%Part a) is proved in \cite{Willett}.
%The result implies that part b) follows from Proposition \ref{free-proper-case}.
%\end{proof}

Note that these facts treat every stratum independently and
one obtains for each stratum 
%$M_{(H)} \cap (\mu_M)^{-1}(0)$
$\momentMKH$
of $(\mu_M)^{-1}(0)$ a reduced contact space;
there is no %structure 
condition %in \cite{Willett} 
that links the various contact structures.

\bigskip

\begin{lem}
If $x_0 \in (\mu_M)^{-1}(0)$ and $H = G_{x_0}$
then $(M^c)_H$ complexifies $M_H$.
\end{lem}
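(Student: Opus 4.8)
The plan is to exhibit $(M^c)_H$ as an equivariant complexification of $M_H$ by comparing their tangent spaces at the fixed point and invoking the general uniqueness of complexifications. First I would recall the slice-type structure established earlier: near $x_0$ the complexification $M^c$ is modelled, via the constructions of Propositions~\ref{KC-mfd} and~\ref{Kempf-Ness-free-case}, on $G^{\mathbb C}\times^{K^{\mathbb C}}S^{\mathbb C}$ where $K$ is a maximal compact subgroup of $G$ containing $H$ (since $H$ is compact) and $S$ is a slice at $x_0$ with $H\subset K$ acting linearly on the model $S\cong V$ for the normal representation. In these coordinates $M_H$ is cut out near $x_0$ by the condition $G_m=H$, i.e.\ it corresponds to $G\cdot\{e\}\times V^H$ (more precisely to the $H$-fixed locus of the slice), and likewise $(M^c)_H$ corresponds to the $H$-fixed locus of the complexified slice $S^{\mathbb C}\cong V^{\mathbb C}$. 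Since $H$ is compact and acts linearly, the fixed-point set $(V^{\mathbb C})^H = (V^H)^{\mathbb C}$, so the $H$-fixed locus in $S^{\mathbb C}$ is a closed complex submanifold whose intersection with $M^c$ is a complexification of $V^H\subset S$.

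Next I would verify the three defining properties of an (equivariant) complexification. \emph{Totally real and correct dimension:} because $M$ is totally real in $M^c$ and $M_H = M\cap (M^c)_H$ (the isotropy of a point of $M$ computed in $M$ agrees with its isotropy in $M^c$, as the $G$-action on $M^c$ extends that on $M$), the submanifold $M_H$ sits inside the complex manifold $(M^c)_H$ as a totally real submanifold, and the $H$-fixed-locus computation above gives $\dim_{\mathbb C}(M^c)_H = \dim_{\mathbb C}(V^H)^{\mathbb C} + \dim_{\mathbb C}(G^{\mathbb C}/K^{\mathbb C}) = \dim_{\mathbb R}V^H + \dim_{\mathbb R}(G/K) = \dim_{\mathbb R}M_H$. \emph{Closedness:} $(M^c)_H$ is closed in $M^c$ (it is a locally closed stratum that is actually closed within the union of strata of its type restricted to a suitably shrunk tube), and $M_H$ is closed in $M_H\subset (M^c)_H$; if necessary one shrinks $M^c$ to a $G$-invariant tube around $M$ so that $(M^c)_H$ becomes a closed $G^{\mathbb C}$-- (or at least local-$G^{\mathbb C}$--) invariant Stein subvariety. \emph{Holomorphy of the $L$-action:} the residual group $L=N_G(H)/H$ acts on both $M_H$ and $(M^c)_H$, the action on $(M^c)_H$ being by holomorphic transformations as the restriction of the holomorphic $G$-action, and it leaves $M_H$ invariant.

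Then the conclusion follows by appealing to the uniqueness of germs of complexifications near a totally real submanifold (the Whitney--Bruhat / Grauert result quoted in the proof of Proposition~\ref{compact-extension}, in its equivariant form, e.g.\ via \cite{HHK}, Corollary~7): any two complexifications of $M_H$ restrict to biholomorphic $L$-invariant neighbourhoods of $M_H$, so the complex structure that $(M^c)_H$ induces on a neighbourhood of $M_H$ \emph{is} the complexification of $M_H$. I would also note that the plurisubharmonic potential $\varrho$ restricts to $(M^c)_H$ and its $d^c$ restricts $\eta|_{M_H}$, so the complexification is compatible with the contact data, matching the way $M^c_{(H)}$ is used in the surrounding discussion.

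The main obstacle I expect is the dimension bookkeeping together with the identification $(V^{\mathbb C})^H=(V^H)^{\mathbb C}$ in the presence of the twisted product $G^{\mathbb C}\times^{K^{\mathbb C}}S^{\mathbb C}$: one must be careful that passing to $H$-fixed points in the \emph{slice} correctly describes the $H$-isotropy stratum of the associated bundle, which uses that $H\subset K$ and that the $K^{\mathbb C}$-action on $S^{\mathbb C}$ restricts on the totally real part $S$ to the original $K$-action — this is exactly what the slice models of Propositions~\ref{KC-mfd} and~\ref{Kempf-Ness-free-case} give, but spelling out that the $H$-fixed strata on the two sides correspond under $\kappa$ is the step requiring the most care. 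Everything else is a routine application of the uniqueness of equivariant complexifications.
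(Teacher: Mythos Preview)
Your overall strategy—pass to a slice model and use the linear identity $(W^{\mathbb C})^{H}=(W^{H})^{\mathbb C}$—is the same as the paper's, but you have chosen the wrong slice, and this creates a genuine gap. You work in the \emph{global} Abels model $G\times^{K}S$ (and its complexification $G^{\mathbb C}\times^{K^{\mathbb C}}S^{\mathbb C}$) with $K$ maximal compact, and then assert that $M_H$ ``corresponds to the $H$-fixed locus of the slice.'' That is not correct: in $G\times^{K}S$ the isotropy of $[g,s]$ is $gK_{s}g^{-1}$, so $M_H$ consists of those $[g,s]$ with $K_s=g^{-1}Hg$, which is not the set $G\times^{K}S^{H}$. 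Your dimension count, which pairs $\dim V^{H}$ with $\dim G/K$, is therefore off as well. You flag exactly this step as ``requiring the most care,'' and indeed it does not go through in the $K$-slice without first reducing to an $H$-slice.

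The paper avoids this by taking a \emph{local} slice at $x_0$ with slice group $H=G_{x_0}$ itself, via the holomorphic slice theorem (\cite{Heinzner-Kurtdere}, \cite{Kurtdere}): a $G$-neighbourhood of $x_0$ embeds openly in $G^{\mathbb C}\times^{H^{\mathbb C}}V$. Here the hypothesis $x_0\in(\mu_M)^{-1}(0)\subset(\mu_{M^c})^{-1}(0)$ is used essentially: it guarantees $(G^{\mathbb C})_{x_0}=(G_{x_0})^{\mathbb C}=H^{\mathbb C}$ and lets one arrange $V=W^{\mathbb C}$ with $W\subset T_{x_0}M$ an $H$-module, so that $M\cap U(x_0)$ sits in $G\times^{H}W$. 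In this model the identification of the $H$-stratum with the $H$-fixed vectors is immediate, and $(W^{\mathbb C})^{H}=(W^{H})^{\mathbb C}$ finishes the proof. Your argument becomes correct once you replace the global $K$-slice by this local $H$-slice (equivalently, take a further $K$-slice of $S$ at $s_0$ to reduce $G\times^{K}(K\times^{H}W)$ to $G\times^{H}W$); but as written the crucial identification is asserted rather than proved, and the role of the momentum-zero hypothesis is not used.
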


%\bigskip

\begin{proof}[{\bf Proof}]
%Embed a $G$-invariant neighbourhood 
%$U(x_0) \hookrightarrow G^{\mathbb C} \times^{H^{\mathbb C}} V$
%as described in Proposition \ref{slice-model}.
%In particular, $U(x_0)$ is embedded $G$-equivariantly and openly
%in a $G^{\mathbb C}$-manifold.
%
%
Let $V$ be the complex vector subspace in $T_{x_0} M^c$,
which is the complement with respect to the K\"ahlerian metric
of the local $G^{\mathbb C}$-orbit through $x_0$.
There is a $G$-invariant neighbourhood $U(x_0)$ of $x_0$
which is openly and $G$-equivariantly embedded in the complex 
$G^{\mathbb C}$-manifold $G^{\mathbb C} \times^{H^{\mathbb C}} V$ 
(\cite{Heinzner-Kurtdere}, \cite{Kurtdere}).
Since $x_0 \in (\mu_M)^{-1}(0) \subset (\mu_{M^c})^{-1}(0)$,
$H^{\mathbb C} = (G_{x_0})^{\mathbb C} = (G^{\mathbb C})_{x_0}$
and it is possible to assume in addition that
$V = W^{\mathbb C}$, where $W \subset T_{x_0} M$
is an $H$-invariant subspace such that
$M \cap U(x_0)$ embeds openly in $G \times^H W$.
If 
$ W_{\langle H \rangle} 
     = \lbrace w \in W \vert \orbit{h}{w} = h \mbox{ for all } h \in H \rbrace $,
it follows that for
$ V_{\langle H \rangle} 
     = \lbrace v \in V \vert \orbit{h}{v} = v \mbox{ for all } h \in H \rbrace 
     = (W_{\langle H \rangle})^{\mathbb C} $.
Finally
$ U(x_0) \cap M_H \hookrightarrow G \times^H W_{\langle H \rangle} $
and 
$ U(x_0) \cap M^c_H \hookrightarrow 
  G^{\mathbb C} \times^{H^{\mathbb C}} (W_{\langle H \rangle})^{\mathbb C} $
are open embeddings.
Since $ G^{\mathbb C} \times^{H^{\mathbb C}} (W_{\langle H \rangle})^{\mathbb C}$
can be regarded as the complexification of
$ G \times^{H} W_{\langle H \rangle}$,
this proves the claim.
\end{proof}

\bigskip

The set
$ (M^c)_H = \lbrace z \in M^c \vert G_z = H \rbrace $
is a complex submanifold of $M^c$
(\cite{Sjamaar}). 
%{\bf Reyer Sjamaar, Holomorphic slices, symplectic reduction and multiplicities of representations, p. 23.)} 
The normalizer $N_G(H)$ of $H$ in $G$ acts naturally on $(M^c)_H$.
The induced action of $L:= \modulo{N_G(H)}{H}$ on $(M^c)_H$ is free.
If $x_0 \in (\mu_M)^{-1}(0)$ and $H = G_{x_0}$, 
it follows that
$M_H \hookrightarrow (M^c)_H$
is embedded in a K\"ahlerian submanifold of $M^c$.
The $1$-form
$ (\iota_{M_H \hookrightarrow M})^{\ast}(\eta) $
is the contact $1$-form on $M_H$ and
$ (\iota_{(M^c)_H \hookrightarrow M^c})^{\ast}(-dd^c \varrho) $
is the K\"ahlerian form on $M^c$.
Furthermore,
$ \eta = (\iota_{M \hookrightarrow M^c})^{\ast}(d^c \varrho) $
and
\begin{align*}
 (\iota_{M_H \hookrightarrow M})^{\ast} \eta 
 & = (\iota_{M_H \hookrightarrow M})^{\ast}
     ((\iota_{M \hookrightarrow M^c})^{\ast}(d^c \varrho)) \\
% & = (\iota_{M \hookrightarrow M^c} \circ 
%      \iota_{M_H \hookrightarrow M})^{\ast}(d^c \varrho) \\
% & = (\iota_{(M^c)_H \hookrightarrow M^c} \circ 
%      \iota_{M_H \hookrightarrow (M^c)_H})^{\ast}
%     (d^c \varrho) \\
 & = (\iota_{M_H \hookrightarrow (M^c)_H})^{\ast}
     ((\iota_{(M^c)_H \hookrightarrow M^c})^{\ast} (d^c \varrho)) \\
 & = (\iota_{M_H \hookrightarrow (M^c)_H})^{\ast} (d^c (\varrho \vert_{(M^c)_H})) .
\end{align*}
Thus the contact manifold
$(M_H, (\iota_{M_H \hookrightarrow M})^{\ast}(\eta))$
is $N_G(H)$-equivariantly embedded in the K\"ahler manifold
$((M^c)_H, -dd^c \varrho \vert_{(M^c)_H}), $
where $\varrho \vert_{(M^c)_H} : (M^c)_H \rightarrow \mathbb R$
is a strictly plurisubharmonic and $N_G(H)$-invariant function.
%
%
%
%The stratifications $M_H$ and $(M^c)_H$ are compatible;
%and $(M^c)_H$ can be regarded as a complexification of $M_H$.
The following result establishes 
the compatibility of the extensions with reductions.

\bigskip

\begin{prop}\label{complex-contact-stratum}
Let $\varrho_{\mathrm{red}} : \modulo{(\mu_{M^c})^{-1}(0)}{G} \rightarrow \mathbb R$
be the function defined by 
$\varrho_{\mathrm{red}} \circ \pi_{(\mu_{M^c})^{-1}(0)} 
   = \varrho \circ \iota_{(\mu_{M^c})^{-1}(0)}$.
Let $x_0 \in (\mu_M)^{-1}(0)$ and $H = G_{x_0}$ be the isotropy group.
The contact manifold
$(M_H, (\iota_{M_H \hookrightarrow M})^{\ast}(\eta))$
embeds in the K\"ahlerian manifold
$((M^c)_H, (\iota_{(M^c)_H \hookrightarrow M^c})^{\ast}(dd^c \varrho))$
such that
$ (\iota_{M_H \hookrightarrow (M^c)_H})^{\ast}(d^c \varrho)
   = (\iota_{M_H \rightarrow M})^{\ast}(\eta). $
The reduced spaces
%$$ \Big( \modulo{(\mu_{M})^{-1}(0) \cap M_H}{L}, 
%   (\iota_{M_H \hookrightarrow M})^{\ast}(\eta_{\mathrm{red}}) \Big)
%    \mbox{ and } 
%   \Big( \modulo{(\mu_{M^c})^{-1}(0) \cap (M^c)_H}{L}, \varrho_{\mathrm{red}} \Big) $$
%$$ \Big( \modulo{(\mu_{M_H})^{-1}(0)}{\modulo{N_G(H)}{H}}, 
%   (\iota_{M_H \rightarrow M})^{\ast}(\eta_{\mathrm{red}}) \Big)
%    \mbox{ and } 
%   \Big( \modulo{(\mu_{M})^{-1}(0)}{\modulo{N_G(H)}{H}}, \varrho_{\mathrm{red}} \Big) $$
$$ \Big( \modulo{\momentMH}{L}, 
   (\iota_{M_H \hookrightarrow M})^{\ast}(\eta_{\mathrm{red}}) \Big)
    \mbox{ and } 
   \Big( \modulo{\momentMcH}{L}, \varrho_{\mathrm{red}} \vert_{\momentMcH / L} \Big), $$
where $L = \modulo{N_G(H)}{H}$,
are related by the property that for the induced embedding
%$$ \iota_{(\mu_{M_H})^{-1}(0) / (N_G(H) / H)}:
%   \modulo{(\mu_{M_H})^{-1}(0)}{\modulo{N_G(H)}{H}} \hookrightarrow 
%           \modulo{(\mu_{(M^c)_H})^{-1}(0)}{\modulo{N_G(H)}{H}} $$
%$ \iota_{(\mu_{M})^{-1}(0) \cap M_H / L} $ of 
%$ \modulo{(\mu_{M})^{-1}(0) \cap M_H}{L}$ into $\modulo{(\mu_{M^c})^{-1}(0) \cap (M^c)_H}{L}$
$ \iota_{\momentMH / L} : \modulo{\momentMH}{L} \hookrightarrow \modulo{\momentMcH}{L}$
%by the property that
%$$ \big( \iota_{(\mu_{M})^{-1}(0) \cap M_H / L} \big)^{\ast} 
%   (d^c \varrho_{\mathrm{red}}) 
% = \big( \iota_{M_H \hookrightarrow M} \big)^{\ast} (\eta_{\mathrm{red}}). $$
$$ \big( \iota_{\momentMH / L} \big)^{\ast} 
   (d^c \varrho_{\mathrm{red}} \vert_{\momentMcH / L}) 
 = \big( \iota_{M_H \hookrightarrow M} \big)^{\ast} (\eta_{\mathrm{red}}). $$
\end{prop}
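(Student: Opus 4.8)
The strategy is to reduce everything to the free-action situation already treated in Proposition \ref{prop-contact-reduction-varrho} by passing to the stratum $M_H$ and its complexification $(M^c)_H$, on which $L = \modulo{N_G(H)}{H}$ acts freely. The first part of the statement—that $(M_H, (\iota_{M_H \hookrightarrow M})^{\ast}(\eta))$ embeds in $((M^c)_H, (\iota_{(M^c)_H \hookrightarrow M^c})^{\ast}(dd^c \varrho))$ with
$(\iota_{M_H \hookrightarrow (M^c)_H})^{\ast}(d^c \varrho) = (\iota_{M_H \to M})^{\ast}(\eta)$—is already established in the computation preceding the statement, using the preceding Lemma (that $(M^c)_H$ complexifies $M_H$) together with the functoriality of pull-back applied to the commuting square of embeddings $M_H \hookrightarrow M \hookrightarrow M^c$ and $M_H \hookrightarrow (M^c)_H \hookrightarrow M^c$. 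So I would open the proof by simply recalling that identity and the fact that $\varrho \vert_{(M^c)_H}$ is a strictly plurisubharmonic, $N_G(H)$-invariant potential.

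\textbf{Key steps.} With the contact form $\eta_H := (\iota_{M_H \hookrightarrow M})^{\ast}(\eta)$ on $M_H$ and potential $\varrho_H := \varrho \vert_{(M^c)_H}$ on $(M^c)_H$, the situation on the stratum is exactly the hypothesis of Proposition \ref{prop-contact-reduction-varrho} for the free and proper $L$-action: $\eta_H = (\iota_{M_H \hookrightarrow (M^c)_H})^{\ast}(d^c \varrho_H)$. Applying that proposition on $M_H \hookrightarrow (M^c)_H$ yields that $(\iota_{\momentMH / L})^{\ast}(d^c (\varrho_H)_{\mathrm{red}})$ is the unique $1$-form $\eta_{\mathrm{red}}$ on $\modulo{\momentMH}{L}$ with
$(\pi_{\momentMH})^{\ast}(\eta_{\mathrm{red}}) = (\iota_{\momentMH})^{\ast}(\eta_H)$,
where $(\varrho_H)_{\mathrm{red}}$ is the potential induced on $\modulo{\momentMcH}{L}$. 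Next I would identify this induced potential with $\varrho_{\mathrm{red}} \vert_{\momentMcH / L}$: since $\varrho_{\mathrm{red}}$ on $\modulo{(\mu_{M^c})^{-1}(0)}{G}$ is defined by $\varrho_{\mathrm{red}} \circ \pi_{(\mu_{M^c})^{-1}(0)} = \varrho \circ \iota_{(\mu_{M^c})^{-1}(0)}$, restricting along the stratum inclusion $\momentMcH \hookrightarrow (\mu_{M^c})^{-1}(0)$ and using the identification $\modulo{\momentMcKH}{G} \cong \modulo{\momentMcH}{L}$ (recalled in the text) shows that $\varrho_{\mathrm{red}} \vert_{\momentMcH / L}$ is exactly the function induced by $\varrho_H \vert_{\momentMcH}$. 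Then $(\iota_{\momentMH / L})^{\ast}(d^c \varrho_{\mathrm{red}} \vert_{\momentMcH / L}) = \eta_{\mathrm{red}}$, and comparing with d) above (which defines $\eta_{\mathrm{red}}$ on $\modulo{\momentMH}{L}$ by the same pull-back property with respect to $\eta$, noting $(\iota_{\momentMH})^{\ast}(\eta_H) = (\iota_{\momentMH})^{\ast}(\iota_{M_H \hookrightarrow M})^{\ast}(\eta) = (\iota_{\momentMH})^{\ast}(\eta)$) gives the claimed identity.

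\textbf{Main obstacle.} The routine part is the pull-back bookkeeping; the delicate point is the compatibility of the two descriptions of the reduced potential, i.e. verifying that $\varrho_{\mathrm{red}}$ restricted to the K\"ahlerian reduced stratum $\modulo{\momentMcH}{L}$ genuinely coincides with the potential obtained by first restricting $\varrho$ to $(M^c)_H$ and then reducing by the free $L$-action. This requires the local normal-form identification $\momentMcKH \cong G \times^H V_H$ and the resulting homeomorphism $\modulo{\momentMcKH}{G} \cong \modulo{\momentMcH}{L}$ to be not merely a set-theoretic bijection but compatible with the two $G$- resp. $L$-invariant functions—essentially that the slice coordinates realizing the Kempf-Ness stratification are the same ones realizing the free reduction on the stratum. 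I would justify this by invoking the local model near $x_0$ (the embedding of $U(x_0)$ in $G^{\mathbb C} \times^{H^{\mathbb C}} V$ from the preceding Lemma) and noting that both reduced functions pull back, via the respective quotient projections from the common set $\momentMcH$, to the single function $\varrho \vert_{\momentMcH}$; uniqueness of the induced function on the quotient then forces them to agree.
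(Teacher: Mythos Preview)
Your proposal is correct and follows essentially the same route as the paper: reduce to the free $L$-action on the stratum $M_H \hookrightarrow (M^c)_H$ and invoke Proposition~\ref{prop-contact-reduction-varrho}. The paper's proof is in fact terser than yours---it simply notes that $L$ acts freely, applies Proposition~\ref{prop-contact-reduction-varrho}, records smoothness of the zero levels and quotients, and states the conclusion---whereas you make explicit the identification of $(\varrho_H)_{\mathrm{red}}$ with $\varrho_{\mathrm{red}}\vert_{\momentMcH/L}$, which the paper absorbs into the notation without comment; your extra care there is justified and does no harm.
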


%\bigskip

\begin{proof}[{\bf Proof}]
Since $L = \modulo{N_G(H)}{H}$ acts freely on $(M^c)_H$,
Proposition \ref{prop-contact-reduction-varrho}
applies. 
It follows that the contact moment map
$ \mu_{M_H} : M_H \rightarrow \mathrm{Lie}(L)^{\ast} $
and the K\"ahlerian moment map 
$ \mu_{(M^c)_H} : (M^c)_H \rightarrow \mathrm{Lie}(L)^{\ast} $
%$ \mu_{(M^c)_H} : (M^c)_H \rightarrow \mathrm{Lie}(N_G(H))^{\ast} $
define smooth momentum zero levels 
%$(\mu_{M_H})^{-1}(0)$ and $(\mu_{(M^c)_H})^{-1}(0)$
$\momentMH$ and $\momentMcH$,
because %$N_G(H)$ 
$L$ acts freely.
For the same reason
%$\modulo{(\mu_{(M^c)_H})^{-1}(0)}{L}$ and $\modulo{(\mu_{M_H})^{-1}(0)}{L}$
%$\modulo{(\mu_{(M^c)_H})^{-1}(0)}{N_G(H)}$ and $\modulo{(\mu_{M_H})^{-1}(0)}{N_G(H)}$
$\modulo{\momentMcH}{L}$ and $\modulo{\momentMH}{L}$
are smooth, and the restriction of 
%$\varrho \vert_{(\mu_{(M^c)_H})^{-1}(0)}$
$\varrho \vert_{\momentMcH}$
defines a strictly plurisubharmonic function
%$$ \varrho_{\mathrm{red}} \vert_{(\mu_{(M^c)_H})^{-1}(0)} $$
$ \varrho_{\mathrm{red}} \vert_{\momentMcH} $
such that the embedding
%$ \iota_{(\mu_{M_H})^{-1}(0) / L} $ 
%of $\modulo{(\mu_{M_H})^{-1}(0)}{L}$ into $\modulo{(\mu_{(M^c)_H})^{-1}(0)}{L} $
%$ \iota_{(\mu_{M_H})^{-1}(0) / L} : 
%       \modulo{(\mu_{M_H})^{-1}(0)}{L} \hookrightarrow \modulo{(\mu_{(M^c)_H})^{-1}(0)}{L} $
%$$ \iota_{(\mu_{M_H})^{-1}(0) / N_G(H)} :
%    \modulo{(\mu_{M_H})^{-1}(0)}{N_G(H)} \hookrightarrow 
%    \modulo{(\mu_{(M^c)_H})^{-1}(0)}{N_G(H)} $$
$ \iota_{\momentMH / L} : 
       \modulo{\momentMH}{L} \hookrightarrow \modulo{\momentMcH}{L} $
satisfies
%$ (\iota_{(\mu_{M_H})^{-1}(0) / L})^{\ast}
%   (d^c \varrho_{\mathrm{red}}) 
% = (\iota_{M_H \hookrightarrow M})^{\ast}(\eta)_{\mathrm{red}} .$
$ (\iota_{\momentMH / L})^{\ast}
   (d^c \varrho_{\mathrm{red}} \vert_{\momentMcH / L}) 
 = (\iota_{M_H \hookrightarrow M})^{\ast}(\eta_{\mathrm{red}}) .$
\end{proof}

\bigskip

In the same way, the strata $M_{(H)}$ and $(M^c)_{(H)}$
are compatible.
For every closed subgroup $H$ of $G$ the strata
%$ M_{(H)} \cap \modulo{(\mu_{M})^{-1}(0)}{G}$
%$ \modulo{(\mu_{M_{(H)}})^{-1}(0)}{G}$ 
$ \modulo{\momentMKH}{G}$ 
of $\modulo{(\mu_{M})^{-1}(0)}{G}$ and
%$ (M^c)_{(H)} \cap \modulo{(\mu_{M^c})^{-1}(0)}{G}$
%$ \modulo{(\mu_{(M^c)_{(H)}})^{-1}(0)}{G}$
$ \modulo{\momentMcKH}{G}$
of $\modulo{(\mu_{M^c})^{-1}(0)}{G}$ are compatible by the function
$\varrho_{\mathrm{red}} \vert : \modulo{(\mu_{M^c})^{-1}(0)}{G} \rightarrow \mathbb R$ 
induced by the restriction
$ \varrho \vert_{(\mu_{M^c})^{-1}(0)} $
in the following sense:

\bigskip

\begin{prop}\label{complex-contact-stratum-red}
The embedding 
%$\iota_{M \hookrightarrow M^c} : M \hookrightarrow M^c$ 
$M \hookrightarrow M^c$ 
induces %on each stratum 
embeddings
%
%%$$  M_{(H)} \cap (\mu_{M})^{-1}(0) \hookrightarrow (M^c)_{(H)} \cap (\mu_{M^c})^{-1}(0) $$
%$$ (\mu_{M_{(H)}})^{-1}(0) \hookrightarrow (\mu_{(M^c)_{(H)}})^{-1}(0) $$ 
%and 
%%$$ \modulo{M_{(H)} \cap (\mu_{M})^{-1}(0)}{G} \hookrightarrow \modulo{(M^c)_{(H)} \cap (\mu_{M^c})^{-1}(0)}{G} $$
%$$ \modulo{(\mu_{M_{(H)}})^{-1}(0)}{G} \hookrightarrow \modulo{(\mu_{(M^c)_{(H)}})^{-1}(0)}{G} $$
%
%$$ (\mu_{M_{(H)}})^{-1}(0) \hookrightarrow (\mu_{(M^c)_{(H)}})^{-1}(0) 
%   \mbox{ and } 
% \modulo{(\mu_{M_{(H)}})^{-1}(0)}{G} \hookrightarrow \modulo{(\mu_{(M^c)_{(H)}})^{-1}(0)}{G} $$
$$ \momentMKH \hookrightarrow \momentMcKH 
   \mbox{ and } 
 \modulo{\momentMKH}{G} \hookrightarrow \modulo{\momentMcKH}{G} $$
such that a contact manifold 
%$ \modulo{M_{(H)} \cap (\mu_{M})^{-1}(0)}{G}$ 
%$ \modulo{(\mu_{M_{(H)}})^{-1}(0)}{G}$
$ \modulo{\momentMKH}{G}$
embeds in %the K\"ahlerian manifold 
%$\modulo{(M^c)_{(H)} \cap (\mu_{M})^{-1}(0)}{G}$ 
%$\modulo{(\mu_{(M^c)_{(H)}})^{-1}(0)}{G}$
$\modulo{\momentMcKH}{G}$
and has the property
%$$ (\iota_{ M_{(H)} \cap (\mu_{M})^{-1}(0) \hookrightarrow 
%           (M^c)_{(H)} \cap (\mu_{M})^{-1}(0) })^{\ast} 
%  (d^c \varrho) = \eta_{\mathrm{red}} , $$
%$$ (\iota_{ (\mu_{M_{(H)}})^{-1}(0) / G \hookrightarrow 
%           (\mu_{(M^c)_{(H)}})^{-1}(0) / G })^{\ast} 
%  (d^c \varrho_{\mathrm{red}}) = \eta_{\mathrm{red}} , $$
$$ (\iota_{ \momentMKH / G \hookrightarrow 
           \momentMcKH / G })^{\ast} 
  (d^c \varrho_{\mathrm{red}} \vert_{\momentMcKH / G}) = \eta_{\mathrm{red}} , $$
where $\eta_{\mathrm{red}}$ is the reduced contact form on 
%$\modulo{M_{(H)} \cap (\mu_{M})^{-1}(0)}{G}$. 
%$\modulo{ (\mu_{M_{(H)}})^{-1}(0)}{G} $.
$\modulo{ \momentMKH}{G} $.
\end{prop}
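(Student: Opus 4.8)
The statement is essentially the ``family version'' of Proposition~\ref{complex-contact-stratum}: instead of working with the fixed-isotropy strata $M_H \hookrightarrow (M^c)_H$ (on which $L = \modulo{N_G(H)}{H}$ acts freely), we work directly with the full orbit-type strata $M_{(H)} \hookrightarrow (M^c)_{(H)}$ and the $G$-quotients. The plan is to deduce everything by combining the isotropy-type reduction (the identifications $\modulo{\momentMH}{L} = \modulo{\momentMKH}{G}$ and $\modulo{\momentMcH}{L} = \modulo{\momentMcKH}{G}$ recalled just before the proposition, from \cite{Greb-Heinzner}, \cite{Sjamaar}, \cite{Willett}, \cite{Lerman-Willett}) with Proposition~\ref{complex-contact-stratum}, which already supplies the form-theoretic identity on the $M_H$-level.

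First I would record that the embedding $\iota_M : M \hookrightarrow M^c$ is $G$-equivariant and carries $M_{(H)}$ into $(M^c)_{(H)}$, because for $m \in M$ one has $G_m = (G^{\mathbb C})_m \cap G$ and, by the slice description used in the preceding Lemma, $G_m^{\mathbb C} = (G^{\mathbb C})_{\iota_M(m)}$ whenever $m \in (\mu_M)^{-1}(0)$; hence $m$ and $\iota_M(m)$ have conjugate isotropy. Restricting to the zero levels of the respective moment maps (which exist and are smooth since the orbits along a fixed stratum have constant dimension, and which are compatible by $\mu_{M^c}\circ\iota_M = \mu_M$ as established in Section~\ref{Compatibility of moment maps for free actions}) gives the first claimed embedding $\momentMKH \hookrightarrow \momentMcKH$, and passing to $G$-quotients gives $\modulo{\momentMKH}{G} \hookrightarrow \modulo{\momentMcKH}{G}$. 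That $\modulo{\momentMKH}{G}$ is a smooth contact manifold with reduced form $\eta_{\mathrm{red}}$ is fact~c)--d) recalled above from \cite{Willett}, \cite{Lerman-Willett}; that $\modulo{\momentMcKH}{G}$ is a smooth Kähler manifold with Kähler potential $\varrho_{\mathrm{red}}\vert_{\momentMcKH/G}$ follows from the free-$L$-action picture and Proposition~\ref{free-proper-case} together with Proposition~\ref{prop-contact-reduction-varrho}.

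The remaining and genuinely substantive point is the identity
$$ (\iota_{\momentMKH / G \hookrightarrow \momentMcKH / G})^{\ast}
   (d^c \varrho_{\mathrm{red}} \vert_{\momentMcKH / G}) = \eta_{\mathrm{red}}. $$
Here I would use the canonical isomorphisms $\modulo{\momentMKH}{G} \cong \modulo{\momentMH}{L}$ and $\modulo{\momentMcKH}{G} \cong \modulo{\momentMcH}{L}$ to transport the question onto the fixed-isotropy stratum, where Proposition~\ref{complex-contact-stratum} already gives
$(\iota_{\momentMH / L})^{\ast}(d^c \varrho_{\mathrm{red}}\vert_{\momentMcH / L}) = (\iota_{M_H \hookrightarrow M})^{\ast}(\eta_{\mathrm{red}})$.
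One must check that under these identifications $d^c \varrho_{\mathrm{red}}\vert_{\momentMcKH/G}$ pulls back to $d^c \varrho_{\mathrm{red}}\vert_{\momentMcH/L}$ — which holds because both are induced by the single $G$-invariant (hence $N_G(H)$-invariant) function $\varrho$ restricted to the same reduced space — and that the reduced contact form on $\modulo{\momentMKH}{G}$ corresponds to the one on $\modulo{\momentMH}{L}$; the latter is exactly the assertion that the two constructions of the contact reduction agree, which is part of fact~d) and the uniqueness in Proposition~\ref{prop-contact-reduction-varrho}. The main obstacle is thus the bookkeeping in this last step: one has to verify that the various projection/inclusion squares (the contact one $\pi_{\momentMH} , \iota_{\momentMH}$, the Kähler one $\pi_{(\mu_{M^c})^{-1}(0)} , \iota_{(\mu_{M^c})^{-1}(0)}$, and the orbit-type-vs-fixed-type identifications) commute, so that the uniqueness clause of Proposition~\ref{prop-contact-reduction-varrho} can be invoked on $\modulo{\momentMKH}{G}$ to conclude that $(\iota_{\momentMKH/G \hookrightarrow \momentMcKH/G})^{\ast}(d^c\varrho_{\mathrm{red}})$ is the reduced form $\eta_{\mathrm{red}}$. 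Once the squares commute this is immediate; assembling the squares is the routine-but-careful part.
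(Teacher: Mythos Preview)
Your proposal is correct and follows essentially the same route as the paper: reduce the orbit-type statement to the fixed-isotropy statement of Proposition~\ref{complex-contact-stratum} via the natural diffeomorphisms $\modulo{\momentMKH}{G}\cong\modulo{\momentMH}{L}$ and $\modulo{\momentMcKH}{G}\cong\modulo{\momentMcH}{L}$, and then invoke commutativity of the resulting square. The paper's proof is terser---it simply cites those diffeomorphisms, Proposition~\ref{complex-contact-stratum}, and the commutative diagram---whereas you spell out more of the bookkeeping, but the substance is identical.
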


%\bigskip

\begin{proof}[{\bf Proof}]
%Willett's result (\cite{Willett}, see here Proposition \ref{prop-Willett} a))
%shows that
The quotients
%$$ \modulo{M_{H} \cap (\mu_{M})^{-1}(0)}{N_G(H)} 
% \cong \modulo{M_{(H)} \cap (\mu_{M})^{-1}(0)}{G}  $$
%$$ \modulo{ (\mu_{M_{H}})^{-1}(0)}{N_G(H)} \cong \modulo{ (\mu_{M_{(H)}})^{-1}(0)}{G} $$
%$ \modulo{ (\mu_{M_{H}})^{-1}(0)}{L} $ and $ \modulo{ (\mu_{M_{(H)}})^{-1}(0)}{G} $
$ \modulo{ \momentMH}{L} $ and $ \modulo{ \momentMKH}{G} $
are naturally diffeomorphic (\cite{Willett}). 
%(\cite{Willett}, see here Proposition \ref{prop-Willett} a)).
%Sjamaar and Lerman prove that
Similarly
%$$ \modulo{(M_{H})^c \cap (\mu_{M^c})^{-1}(0)}{N_G(H)} 
% \cong \modulo{(M^c)_{(H)} \cap (\mu_{M^c})^{-1}(0)}{G} $$
%$$ \modulo{(\mu_{(M^c)_{H}})^{-1}(0)}{N_G(H)} 
% \cong \modulo{(\mu_{(M^c)_{(H)}})^{-1}(0)}{G} $$
%$ \modulo{(\mu_{(M^c)_{H}})^{-1}(0)}{L}$ and $ \modulo{(\mu_{(M^c)_{(H)}})^{-1}(0)}{G} $
$ \modulo{\momentMcH}{L}$ and  $\modulo{\momentMcKH}{G} $
are naturally diffeomorphic.
Proposition \ref{complex-contact-stratum} 
%$$ (\iota_{(\mu_{M_H})^{-1}(0) / G \rightarrow 
%           (\mu_{(M^c)_H})^{-1}(0) / G})^{\ast}
%   (d^c \varrho_{\mathrm{red}}) 
% = (\iota_{M_H \hookrightarrow M})^{\ast}(\eta_{\mathrm{red}}) $$
and the commutativity of the diagram
\[ \begin{array}{rcccl}
    % \modulo{(\mu_{M_H})^{-1}(0)}{L} 
     \modulo{\momentMH}{L}  %= \modulo{(\mu_M)^{-1}(0) \cap M_H}{L} 
        & \hookrightarrow &  
     %\modulo{(\mu_{(M^c)_H})^{-1}(0)}{L} 
      \modulo{\momentMcH}{L}\\ 
        %= \modulo{(\mu_{M^c})^{-1}(0) \cap (M^c)_H}{L} \\
        & & \\
     \Big \downarrow \cong \phantom{xx} & &  \Big \downarrow \cong \\
        & & \\
    %\modulo{(\mu_{M_{(H)}})^{-1}(0)}{G} 
      \modulo{\momentMKH}{G} %= \modulo{(\mu_M)^{-1}(0) \cap M_{(H)}}{G} 
        & \hookrightarrow &  
     %\modulo{(\mu_{(M^c)_{(H)}})^{-1}(0)}{G} 
      \modulo{\momentMcKH}{G} %= \modulo{(\mu_{M^c})^{-1}(0) \cap (M^c)_{(H)}}{G} 
   \end{array}  \]
%where 
%$\iota_{(\mu_{M_H})^{-1}(0) / G \rightarrow (\mu_{(M^c)_H})^{-1}(0) / G} : 
% \modulo{(\mu_M)^{-1}(0) \cap M_{(H)}}{G} 
%  \rightarrow \modulo{(\mu_{M^c})^{-1}(0) \cap (M^c)_{(H)}}{G} $
prove the claim.
\end{proof}

\bigskip

As a summary, 
the geometry of the contact, Cauchy-Riemann and K\"ahlerian reductions
can be described as follows.

\bigskip

\begin{cor}
Let $G$ be an extendable Lie group and
$(M, \eta)$ a proper $G$-contact manifold.
There is an equivariant Stein complexification $M^c$
with a smooth strictly plurisubharmonic function
$\varrho : M^c \rightarrow \mathbb R$
such that $d^c \varrho$ extends $\eta$.
Let the fuction
$\varrho_{\mathrm{red}} : \modulo{(\mu_{M^c})^{-1}(0)}{G} \rightarrow \mathbb R$
be defined %the induced 
%strictly plurisubharmonic 
%function such that
by
$\varrho \circ \iota_{(\mu_{M^c})^{-1}(0)} 
   = \varrho_{\mathrm{red}} \circ \pi_{(\mu_{M^c})^{-1}(0)}$.
%where 
%$\pi_{(\mu_{M^c})^{-1}(0)} : (\mu_{M^c})^{-1}(0) \rightarrow \modulo{(\mu_{M^c})^{-1}(0)}{G}$.
Let $H < G$ be a compact subgroup and $L = \modulo{N_G(H)}{H}$.
Then for every stratum $(M^c)_{(H)}$,
the $1$-form $d^c (\varrho_{\mathrm{red}} \vert_{\momentMcKH / G} )$ 
\begin{enumerate}
\item %to $\modulo{(\mu_{(M^c)_{(H)}})^{-1}(0)}{G}$
      %   $\modulo{\momentMcKH}{G}$
      provides the K\"ahlerian reduction of $(M^c)_{(H)}$,
\item %to $(\varrho_{\mathrm{red}})^{-1}(0) \cap \modulo{(\mu_{(M^c)_{(H)}})^{-1}(0)}{G}$
      pulled-back to
      $(\varrho_{\mathrm{red}})^{-1}(0) \cap \big( \modulo{\momentMcKH}{G} \big )$
      is equivalent to the contact and the Cauchy-Riemann reduction
      of $(M^{\mathrm{CR}})_H := (\varrho \vert_{(M^c)_H})^{-1}(0) \subset (M^c)_H$ 
\item and pulled back to 
      %$\modulo{(\mu_{(M)_{(H)}})^{-1}(0)}{G}$
      $\modulo{\momentMKH}{G}$
      provides the contact reduction of $M_{(H)}$.
\end{enumerate}
\end{cor}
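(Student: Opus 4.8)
The plan is to read the corollary as a summary: each of its three claims is the restriction to a single orbit-type stratum $(M^c)_{(H)}$ of a statement already proved in this section, and on that stratum the general proper $G$-action is replaced by the free and proper action of $L = \modulo{N_G(H)}{H}$ on the submanifold $\momentMcH$, so that all the free-action results apply verbatim. First I would fix, by Theorem \ref{Komplexifizierungssatz}, the equivariant Stein complexification $M^c$ and the $G$-invariant strictly plurisubharmonic $\varrho$ with $\iota_M^{\ast}(d^c\varrho) = \eta$, and shrink $M^c$ as in Lemma \ref{lem-Stein-tube} so that $M^{\mathrm{CR}} = \varrho^{-1}(0)$ is a strongly pseudoconvex hypersurface containing $M$; on each complex submanifold $(M^c)_H$ the restriction $\varrho\vert_{(M^c)_H}$ is again strictly plurisubharmonic and $N_G(H)$-invariant and cuts out $(M^{\mathrm{CR}})_H$. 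Since $\varrho$ is $G$-invariant and strictly plurisubharmonic, the descended function $\varrho_{\mathrm{red}}$, and its restriction $\varrho_{\mathrm{red}}\vert_{\momentMcKH / G}$, is again strictly plurisubharmonic; in particular $d^c\varrho_{\mathrm{red}}$ pulled back to $(\varrho_{\mathrm{red}})^{-1}(0)$ inside such a stratum is a genuine contact form (cf.\ the Remark after Proposition \ref{contact-isomorphic}).

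\textbf{Parts a) and c).} Using the local slice model $G \times^{H} V_H$ for $\momentMcKH$ recalled before the lemma, together with the canonical identifications $\modulo{\momentMcKH}{G} \cong \modulo{\momentMcH}{L}$ and $\modulo{\momentMKH}{G} \cong \modulo{\momentMH}{L}$, everything is computed from the free $L$-action on $\momentMcH$, respectively $\momentMH$. Proposition \ref{free-proper-case} gives the quotient its complex structure, and Propositions \ref{Kempf-Ness-free-case} and \ref{prop-contact-reduction-varrho} show that $\varrho_{\mathrm{red}}\vert_{\momentMcKH / G}$ is a Kählerian potential, so $dd^c$ of it is the reduced Kähler form: this is part a). For part c) I would apply Proposition \ref{prop-contact-reduction-varrho} to the free and proper $L$-action on $M_H$, so that $(\iota_{\momentMH / L})^{\ast}(d^c(\varrho_{\mathrm{red}}\vert_{\momentMcH / L}))$ is the unique $1$-form $\eta_{\mathrm{red}}$ on $\modulo{\momentMH}{L}$ with $(\pi_{\momentMH})^{\ast}\eta_{\mathrm{red}} = (\iota_{\momentMH})^{\ast}\eta$, i.e.\ the contact reduction; this is Proposition \ref{complex-contact-stratum}, and transporting along the above diffeomorphisms as in Proposition \ref{complex-contact-stratum-red} gives the statement for $M_{(H)}$.

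\textbf{Part b).} This is Proposition \ref{contact-isomorphic} read on the stratum. The hypersurface $(\varrho_{\mathrm{red}}\vert_{\momentMcKH / G})^{-1}(0)$ is the image of $\modulo{(\mu_{M^{\mathrm{CR}}})^{-1}(0)}{L}$; it carries on one hand the contact structure pulled back from $d^c\varrho_{\mathrm{red}}$, which by the uniqueness in Proposition \ref{prop-contact-reduction-varrho} is the contact reduction of $(M^{\mathrm{CR}})_H$, and on the other hand, since $G$ acts by holomorphic transformations preserving $M^{\mathrm{CR}}$, it inherits the Cauchy--Riemann structure of Loose's CR-reduction quoted in Proposition \ref{contact-isomorphic}. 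The compatibility of all three reductions then follows from the commutative diagram preceding Proposition \ref{contact-isomorphic}, together with the inclusions $\momentMH \hookrightarrow \momentMcH \hookrightarrow (\mu_{M^c})^{-1}(0)$ and the induced inclusions of quotients established in Proposition \ref{complex-contact-stratum}.

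\textbf{Main obstacle.} The only point that is not pure bookkeeping is the reduction-to-the-free-case step: one must check that the identifications $\modulo{\momentMH}{L} \cong \modulo{\momentMKH}{G}$ and their complexified counterparts are compatible with the potentials and with the operator $d^c$, so that the pull-back identities of the form $(\iota)^{\ast}(d^c\varrho_{\mathrm{red}}) = \eta_{\mathrm{red}}$ survive passage to the quotient. Once the diagrams of Propositions \ref{complex-contact-stratum} and \ref{complex-contact-stratum-red} are in place this is immediate, which is precisely why the corollary can be presented as a summary of the preceding results.
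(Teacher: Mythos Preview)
Your proposal is correct and follows essentially the same approach as the paper: both treat the corollary as a summary, reducing each claim on a stratum $(M^c)_{(H)}$ to the free $L$-action on $(M^c)_H$ and then invoking Propositions \ref{prop-contact-reduction-varrho}, \ref{contact-isomorphic}, \ref{complex-contact-stratum}, and \ref{complex-contact-stratum-red}. The paper's own proof is even terser (it also cites Proposition \ref{compatibility-Kahler} for the compatibility diagram), and your reference to Proposition \ref{Kempf-Ness-free-case} is superfluous here, but the substance is the same.
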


%\medskip

\begin{proof}[{\bf Proof}]
The compatibility of the reductions 
with the K\"ahlerian reduction is shown in 
Proposition \ref{compatibility-Kahler}.
The Cauchy-Riemann reduction is carried out in 
Proposition \ref{contact-isomorphic} 
%and Proposition \ref{CR-isomorphic},
and the contact reduction by 
Proposition \ref{prop-contact-reduction-varrho}.
These results are applied to the stratifications 
described in 
Proposition \ref{complex-contact-stratum} and 
Proposition \ref{complex-contact-stratum-red}.
\end{proof}

\bigskip

\medskip

{\bf Piecewise contact structures}

\bigskip

For a proper action
$G \times M \rightarrow M$ on a contact manifold $(M, \eta)$
by contact transformations 
$\modulo{(\mu_M)^{-1}(0)}{G}$
is stratified into smooth contact manifolds
%$ \modulo{M_{(H)} \cap (\mu_M)^{-1}(0)}{G} $
%$ \modulo{(\mu_{M_{(H)}})^{-1}(0)}{G} $ .
$ \modulo{\momentMKH}{G} $
(\cite{Willett}, \cite{Lerman-Willett}).
The respective contact structures are induced by the contact reductions.
However, these contact structures are treated separately;
the transition between two strata is not worked out 
in \cite{Willett} and \cite{Lerman-Willett}.
In the symplectic setting, 
a Poisson structure can be defined 
%on $\modulo{\mu^{-1}(0)}{G}$ 
which allows one to discuss 
the compatibility of the various %symplectic structures.
strata.
As a suitable tool in the case of contact manifolds,
the following definition of a piecewise contact structure
is suggested here to state a compatibility condition
from stratum to stratum.

\bigskip

\begin{defi}\label{def-piecewise-contact}
Let $M = \bigcup_{\alpha \in I} M_{\alpha}$
be a stratified topological space such that 
every stratum $M_{\alpha}$ is a differentiable manifold.
A family of $1$-forms $\eta_{\alpha}$ on $M_{\alpha}$, $\alpha \in I$,
is called a piecewise contact structure if
\begin{enumerate}
\item each $(M_{\alpha}, \eta_{\alpha})$ is a contact manifold,
\item there is a complex space $M^c$ with a stratification $M_{\alpha}^c$
      into complex manifolds such that an embedding
      $ \iota : M \hookrightarrow M^c $
      induces embeddings 
      $ \iota_{\alpha} : M_{\alpha} \hookrightarrow M_{\alpha}^c$
      as totally real submanifolds, %of half (real) dimension,
\item there is a strictly plurisubharmonic function 
      $\varrho : M^c \rightarrow \mathbb R$
      such that for every $\alpha$ the restricted function	
      $ \varrho \vert_{M_{\alpha}^c} $
      is smooth and satisfies
      $$ \iota_{\alpha}^{\ast} (d^c(\varrho \vert_{M_{\alpha}^c})) 
        = \eta_{\alpha} .$$
\end{enumerate}
\end{defi}

\bigskip

\begin{rem}
%Corollary \ref{G-times-K-invariante-function} 
Theorem \ref{Komplexifizierungssatz}
shows that a smooth contact manifold is a 
piecewise contact manifold which consists of one stratum only.
\end{rem}

\bigskip

In the case of an extendable Lie group (\cite{Heinzner-Kurtdere}, \cite{Kurtdere})
show that $\modulo{(\mu_{M^c})^{-1}(0)}{G}$
enherits from $M^c$ the structure of a complex space 
in a natural way on which $\varrho_{\mathrm{red}}$
is a strictly plurisubharomonic function.
Then 
$\modulo{(\mu_{M^c})^{-1}(0)}{G} = \bigcup_{H<G} \modulo{\mathcal{M}(M_{(H)}^c)}{G}$
stratifies the reduced space $\modulo{(\mu_{M^c})^{-1}(0)}{G}$
and Proposition \ref{complex-contact-stratum-red}
can now also be stated as follows:

\bigskip

\begin{theorem}\label{strata-patched-together}
Let $G$ be an extendable Lie group 
which acts properly on a contact manifold $(M, \eta)$
by contact transformations. 
%Then $\modulo{(\mu_M)^{-1}(0)}{G}$ can be given 
%naturally the structure of a piecewise contact manifold.
Then there is a canonically defined structure of 
a piecewise contact manifold on the quotient
$\modulo{(\mu_M)^{-1}(0)}{G}$.
\hfill $\Box$
\end{theorem}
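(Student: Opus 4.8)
The plan is to exhibit explicitly the data required by Definition~\ref{def-piecewise-contact} for the stratified space $\modulo{(\mu_M)^{-1}(0)}{G} = \bigcup_{H<G} \modulo{\momentMKH}{G}$, where the strata are indexed by the isotropy types $H$ of the $G$-action. First I would recall that, by the results of Willett and Lerman--Willett quoted in facts~c) and d) above, each stratum $\modulo{\momentMKH}{G}$ is a smooth manifold carrying the canonical contact reduced form $\eta_{\mathrm{red}}$; thus condition~(i) of Definition~\ref{def-piecewise-contact} is satisfied, and moreover the family $\{\eta_{\mathrm{red}}\}_{H<G}$ is determined uniquely by $(M,\eta,G)$ and hence is canonical --- this is what the word ``canonically'' in the statement refers to, the complex space $M^c$ and the potential $\varrho$ of Definition~\ref{def-piecewise-contact} being only witnesses of compatibility rather than part of the structure.

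Next I would produce the ambient complex space. Apply Theorem~\ref{Komplexifizierungssatz} to obtain a $G$-invariant Stein complexification $M^c$ and a $G$-invariant strictly plurisubharmonic $\varrho:M^c\to\mathbb R$ with $\eta=\iota_M^\ast(d^c\varrho)$, and pass to the K\"ahlerian reduced space $\modulo{(\mu_{M^c})^{-1}(0)}{G}$. By the cited results (\cite{Heinzner-Kurtdere}, \cite{Kurtdere}, \cite{Sjamaar}) this is a complex space, stratified into the complex manifolds $\modulo{\momentMcKH}{G}$, and the induced function $\varrho_{\mathrm{red}}$ defined by $\varrho_{\mathrm{red}}\circ\pi_{(\mu_{M^c})^{-1}(0)}=\varrho\circ\iota_{(\mu_{M^c})^{-1}(0)}$ is strictly plurisubharmonic. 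Since $\mu_{M^c}\circ\iota_M=\mu_M$, the embedding $\iota_M:M\hookrightarrow M^c$ restricts to $(\mu_M)^{-1}(0)\hookrightarrow(\mu_{M^c})^{-1}(0)$ and descends, stratum by stratum, to the embeddings $\modulo{\momentMKH}{G}\hookrightarrow\modulo{\momentMcKH}{G}$ furnished by Proposition~\ref{complex-contact-stratum-red}. That these are totally real embeddings is read off from the local slice model of the lemma showing that $(M^c)_H$ complexifies $M_H$ (via $G^{\mathbb C}\times^{H^{\mathbb C}}(W_{\langle H\rangle})^{\mathbb C}$), together with the Kempf--Ness comparison of Propositions~\ref{free-proper-case} and~\ref{contact-isomorphic}, which identify the contact reduction of a totally real stratum with a totally real slice of the corresponding K\"ahler reduction; this yields condition~(ii).

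Finally, for condition~(iii) I would restrict $\varrho_{\mathrm{red}}$ to the strata: $\varrho_{\mathrm{red}}\vert_{\momentMcKH / G}$ is smooth because each stratum is a complex manifold on which the descended potential restricts smoothly, and Proposition~\ref{complex-contact-stratum-red} gives precisely
$$ \big( \iota_{ \momentMKH / G \hookrightarrow \momentMcKH / G } \big)^{\ast}\big(d^c(\varrho_{\mathrm{red}}\vert_{\momentMcKH / G})\big) = \eta_{\mathrm{red}}. $$
Together with (i) and (ii) this verifies all three conditions of Definition~\ref{def-piecewise-contact}, so $\{\eta_{\mathrm{red}}\}_{H<G}$ is a canonically defined piecewise contact structure on $\modulo{(\mu_M)^{-1}(0)}{G}$. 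I expect the only delicate point to be the verification of total reality in~(ii) uniformly across the strata --- namely checking that the global complex space structure on $\modulo{(\mu_{M^c})^{-1}(0)}{G}$ coming from \cite{Heinzner-Kurtdere} restricts on each piece to the K\"ahler reduction used in Proposition~\ref{complex-contact-stratum-red}, and that the passage from the contact (or Cauchy--Riemann) reduction to the K\"ahler reduction preserves total reality; everything else is an assembly of the propositions already proved.
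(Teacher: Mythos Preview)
Your proposal is correct and follows essentially the same approach as the paper: the theorem is presented there as a direct reformulation of Proposition~\ref{complex-contact-stratum-red}, with the ambient complex space $\modulo{(\mu_{M^c})^{-1}(0)}{G}$ and the potential $\varrho_{\mathrm{red}}$ supplied by the paragraph preceding the statement (via Theorem~\ref{Komplexifizierungssatz} and the cited results \cite{Heinzner-Kurtdere}, \cite{Kurtdere}). Your write-up is in fact more explicit than the paper's, which simply ends with a $\Box$; in particular, your flagging of the total-reality check in condition~(ii) is a point the paper leaves implicit.
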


\bigskip

%{\phantom{Hier sollte einfach etwas Text stehen.}}
%\hfill\eject

%\newpage

%\setcounter{page}{0}
%\pagenumbering{Roman}
%\setcounter{page}{1}

\addcontentsline{toc}{section}{\numberline{}\large{References}}

%\vspace{0.5cm}

%%Bochum, \today

\end{document}